\newtheorem{theorem}{Theorem}[section]
\newtheorem{proposition}[theorem]{Proposition}
\newtheorem{remark}[theorem]{Remark}
\newtheorem{defin}[theorem]{Definition}
\theoremstyle{definition}
\theoremstyle{remark}
\numberwithin{equation}{section}
\newcommand{\mb}{\mathbb}
\newcommand{\mc}{\mathcal}
\newcommand{\black}{\color{black}}
\renewcommand{\leq}{\leqslant}
\renewcommand{\geq}{\geqslant}
\begin{document}

\title[Motion of a Rigid body in a Compressible Fluid with Navier-slip boundary condition]{Motion of a Rigid body in a Compressible Fluid with Navier-slip boundary condition}

 \date{\today}
 \author{\v S. Ne\u{c}asov\'a} \address{Institute of Mathematics, Czech Academy of Sciences, \v Zitn\' a 25, 11567 Praha 1, Czech Republic} \email{matus@math.cas.cz}

\author{M. Ramaswamy}  \address{NASI Senior Scientist, ICTS-TIFR, Survey No. 151, Sivakote, Bangalore, 560089, India}  \email{mythily.r@icts.res.in}
 
\author{A. Roy}
\address{Institute of Mathematics, Czech Academy of Sciences, \v Zitn\' a 25, 11567 Praha 1, Czech Republic}
\email{royarnab244@gmail.com}

\author{A. Schl\"{o}merkemper} \address{Institute of Mathematics, University of W\"urzburg, Emil-Fischer-Str.~40, 97074 W\"urzburg, Germany}
\email {anja.schloemerkemper@mathematik.uni-wuerzburg.de}

\begin{abstract}
In this work, we study the motion of a rigid body in a bounded domain which is filled with a compressible isentropic fluid. We consider the Navier-slip boundary condition at the interface as well as at the boundary of the domain. This is the first mathematical analysis of a compressible fluid-rigid body
system where  Navier-slip boundary conditions  are considered. We prove existence of a weak solution of the fluid-structure system up to collision.
\end{abstract}

\maketitle


\tableofcontents


\section{Introduction}\label{sec_intro}
Let $\Omega \subset \mathbb{R}^3$ be a bounded smooth domain occupied by a fluid and a  rigid body. Let the rigid body $\mathcal{S}(t)$ be a regular, bounded domain and moving inside $\Omega$. The motion of the rigid body is governed by the balance equations for linear and angular momentum. We assume that the fluid domain $\mathcal{F}(t)=\Omega \setminus \overline{\mathcal{S}(t)}$ is filled with a viscous isentropic compressible fluid. We also assume the slip boundary conditions at the interface of the interaction of the fluid and the rigid body as well as at $\partial \Omega$. 
The evolution of this fluid-structure system can be described by the following equations 
\begin{align}
\frac{\partial {\rho}_{\mc{F}}}{\partial t} + \operatorname{div}({\rho_{\mc{F}}} u_{\mc{F}}) =0, \quad &\forall \ (t,x)\in (0,T)\times\mc{F}(t),\label{mass:comfluid}\\
\frac{\partial ({\rho_{\mc{F}}} u_{\mc{F}})}{\partial t}+ \operatorname{div}({\rho_{\mc{F}}} u_{\mc{F}}\otimes u_{\mc{F}})- \operatorname{div} \mathbb{T}(u_{\mc{F}})+\nabla p_{\mc{F}} =\rho_{\mc{F}}g_{\mc{F}},\quad &\forall \ (t,x)\in (0,T)\times \mc{F}(t),\label{momentum:comfluid}\\
mh''(t)= -\int\limits_{\partial \mc{S}(t)} \left(\mathbb{T}(u_{\mc{F}}\right) - p_{\mc{F}}\mathbb{I}) \nu\, d\Gamma + \int\limits_{\mc{S}(t)} \rho_{\mc{S}}g_{\mc{S}}\, dx,\quad &\forall \ t\in (0,T), \label{linear momentumcomp:body}\\
(J\omega)'(t) = -\int\limits_{\partial \mc{S}(t)} (x-h(t)) \times (\mathbb{T}(u_{\mc{F}}) - p_{\mc{F}}\mathbb{I}) \nu\, d\Gamma + \int\limits_{\mc{S}(t)} (x-h(t)) \times \rho_{\mc{S}}g_{\mc{S}}\, dx,\quad &\forall \ t\in (0,T), \label{angular momentumcomp:body}
\end{align}
the boundary conditions
\begin{align}
u_{\mc{F}}\cdot \nu = u_{\mc{S}} \cdot \nu, \quad &\forall \ (t,x) \in (0,T)\times \partial \mc{S}(t), \label{boundarycomp-1}\\ (\mathbb{T}(u_{\mc{F}}) \nu)\times \nu = -\alpha (u_{\mc{F}}-u_{\mc{S}})\times \nu, \quad &\forall \ (t,x) \in (0,T)\times\partial \mc{S}(t), \label{boundarycomp-2}\\
u_{\mc{F}}\cdot \nu = 0, \quad &\forall \ (t,x) \in (0,T)\times \partial \Omega, \label{boundarycomp-3}\\ (\mathbb{T}(u_{\mc{F}}) \nu)\times \nu = -\alpha (u_{\mc{F}}\times \nu), \quad &\forall \ (t,x) \in (0,T)\times \partial \Omega, \label{boundarycomp-4}
\end{align}
and the initial conditions
\begin{align}
{\rho_{\mc{F}}}(0,x)=\rho_{\mc{F}_{0}}(x),\quad  ({\rho_{\mc{F}}}u_{\mc{F}})(0,x)=q_{\mc{F}_0}(x), & \quad \forall \ x\in \mc{F}_0,\label{initial cond}\\
 h(0)=0,\quad h'(0)=\ell_{0},\quad \omega(0)=\omega_{0}.\label{initial cond:comp}
\end{align}
 The fluid occupies, at $t=0$, the domain $\mc{F}_0=\Omega \setminus \mc{S}_0$, where the initial position of the rigid body is $\mc{S}_0$. In equations \eqref{linear momentumcomp:body}--\eqref{boundarycomp-4}, $\nu(t , x )$ is the unit normal to $\partial\mc{S}(t)$ at the point $x \in \partial\mc{S}(t)$, directed to the interior of the body.
In \eqref{momentum:comfluid} and \eqref{linear momentumcomp:body}--\eqref{angular momentumcomp:body}, $g_{\mc{F}}$ and $g_{\mc{S}}$ are the densities of the volume forces on the fluid and on the rigid body, respectively. Moreover, $\alpha >0$ is a coefficient of friction.
 Here, the notation $u \otimes v$ is the tensor product for two vectors $u,v \in \mathbb{R}^3$ and it is defined as
$u \otimes v=(u_{i}v_{j})_{1\leq i,j \leq 3}$. In the above equations, $\rho_{\mc{F}}$ and $u_{\mc{F}}$ represent respectively the mass  density and the velocity of the fluid, and the pressure of the fluid is denoted by $p_{\mc{F}}$. 

We assume that the flow is in the barotropic regime and we focus on the isentropic case where the relation between $p_{\mc{F}}$ and $\rho_{\mc{F}}$ is given by the constitutive law:
\begin{equation} \label{const-law}
p_{\mc{F}}= a_{\mc{F}}\rho_{\mc{F}}^{\gamma},
\end{equation} with $a_{\mc{F}}>0$ and the adiabatic constant $\gamma > \frac{3}{2}$, which is a necessary assumption for the existence of a  weak solution of  compressible fluids (see for example \cite{EF70}). 

As is common, we set $$\mathbb{T}(u_{\mc{F}})= 2\mu_{\mc{F}} \mathbb{D}(u_{\mc{F}}) + \lambda_{\mc{F}}\operatorname{div}u_{\mc{F}}\mathbb{I},$$ where $\mathbb{D}(u_{\mc{F}})=\frac{1}{2}\left(\nabla u_{\mc{F}} + \nabla u_{\mc{F}}^{\top}\right)$ denotes the symmetric part of the velocity gradient, $\nabla u_{\mc{F}}^{\top}$ is the transpose of the matrix $\nabla u_{\mc{F}}$, and $\lambda_{\mc{F}},\mu_{\mc{F}}$ are the viscosity coefficients satisfying 
\begin{equation*}
\mu_{\mc{F}} > 0, \quad 3\lambda_{\mc{F}} + 2\mu_{\mc{F}} \geq 0.
\end{equation*}
 The Eulerian velocity $u_{\mc{S}}(t,x)$ at each point $x\in \mc{S}(t)$ of the rigid body is given by 
\begin{equation}\label{Svel1}
u_{\mc{S}}(t,x)= h'(t)+ \omega(t) \times (x-h(t)),
\end{equation} 
where $h(t)$ is the centre of mass and $h'(t)$, $\omega(t)$  are the linear and angular velocities of the rigid body. We remark that $\mc{S}(t)$ is uniquely defined by $h(t)$, $\omega(t)$ and $\mc{S}_0$. Similarly, the knowledge of $\mc{S}(t)$ and $\mc{S}_0$ yields $h(t)$ and $\omega(t)$.
The initial velocity of the rigid body is given by 
\begin{equation}\label{Svel2}
u_{\mc{S}}(0,x)= u_{\mc{S}_0}:= \ell_0+ \omega_0 \times x,\quad x\in \mc{S}_0.
\end{equation}
Here the mass density of the body $\rho_{\mc{S}}$ satisfies the following continuity equation
\begin{equation}\label{eq:vrBeq}
    \frac{\partial \rho_{\mc{S}}}{\partial t} + u_\mc{S}\cdot\nabla \rho_{\mc{S}} = 0, \quad \forall \ (t,x)\in (0,T)\times\mc{S}(t),\quad \rho_{\mc{S}}(0,x)=\rho_{\mc{S}_0}(x),\quad \forall \ x\in \mc{S}_0.  
\end{equation}
Moreover, $m$ is the mass and $J(t)$ is the moment of inertia matrix of the solid. We express $h(t)$, $m$ and $J(t)$ in the following way:
\begin{align}
m &= \int\limits_{\mc{S}(t)} \rho_{\mc{S}} \ dx, \label{def:m} \\
    h(t) &= \frac{1}{m} \int\limits_{\mc{S}(t)} \rho_{\mc{S}} \  x \ dx, \\
    J(t) &=  \int\limits_{\mc{S}(t)} \rho_{\mc{S}}  \big[ |x-h(t)|^2\mathbb{I} - (x-h(t)) \otimes (x-h(t)) \big] \ dx. \label{def:J}
\end{align}
In the remainder of this introduction, we present the weak formulation of the system, discuss our  main result regarding the existence of weak solutions and put it in a larger perspective. 

\subsection{Weak formulation}\label{S2}
We derive a weak formulation with the help of multiplication by appropriate test functions and integration by parts by taking care of the boundary conditions. Due to the presence of the Navier-slip boundary condition, the test functions will be discontinuous across the fluid-solid interface. We introduce the set of rigid velocity fields: 
\begin{equation} \label{defR}
\mc{R}=\left\{ \zeta : \Omega \to \mathbb{R}^3 \mid \mbox{There exist   }V,  r, a \in \mathbb{R}^3 \mbox{ such that }\zeta(x)=V+ r \times \left(x-a\right)\mbox{ for any } x\in\Omega\right\}.
\end{equation}
For any $T>0$, we define the test function space $V_{T}$ as follows: 
\begin{equation}\label{def:test}
V_{T}=
\left\{\!\begin{aligned}
&\phi \in C([0,T]; L^2(\Omega))\mbox{ such that  there exist }\phi_{\mc{F}}\in \mc{D}([0,T); \mc{D}(\overline{\Omega})),\, \phi_{\mc{S}}\in \mc{D}([0,T); \mc{R})\\ &\mbox{satisfying  }\phi(t,\cdot)=\phi_{\mc{F}}(t,\cdot)\mbox{ on }\mc{F}(t),\quad \phi(t,\cdot)=\phi_{\mc{S}}(t,\cdot)\mbox{ on }\mc{S}(t)\mbox{ with }\\ &\phi_{\mc{F}}(t,\cdot)\cdot \nu = \phi_{\mc{S}}(t,\cdot)\cdot \nu \mbox{ on }\partial\mc{S}(t),\ \phi_{\mc{F}}(t,\cdot)\cdot \nu=0 \mbox{ on }\partial\Omega\mbox{ for all }t\in [0,T]
\end{aligned}\right\},
\end{equation}
where $\mc{D}$ denotes the sets of
all infinitely differentiable functions that have compact support.
We multiply equation \eqref{momentum:comfluid} by a test function $\phi\in V_T$ and integrate over $\mc{F}(t)$ to obtain
\begin{multline}\label{express1}
\frac{d}{dt} \int\limits_{\mc{F}(t)} \rho_{\mc{F}}u_{\mc{F}}\cdot \phi_{\mc{F}} - \int\limits_{\mc{F}(t)} \rho_{\mc{F}}u_{\mc{F}}\cdot \frac{\partial}{\partial t}\phi_{\mc{F}} - \int\limits_{\mc{F}(t)} (\rho_{\mc{F}}u_{\mc{F}} \otimes u_{\mc{F}}) : \nabla \phi_{\mc{F}} + \int\limits_{\mc{F}(t)} (\mathbb{T}(u_{\mc{F}}) - p_{\mc{F}}\mathbb{I}) : \mathbb{D}(\phi_{\mc{F}}) \\
= \int\limits_{\partial \Omega} (\mathbb{T}(u_{\mc{F}}) - p_{\mc{F}}\mathbb{I}) \nu\cdot \phi_{\mc{F}} + \int\limits_{\partial \mc{S}(t)} (\mathbb{T}(u_{\mc{F}}) - p_{\mc{F}}\mathbb{I}) \nu\cdot \phi_{\mc{F}} +\int\limits_{\mc{F}(t)}\rho_{\mc{F}}g_{\mc{F}}\cdot \phi_{\mc{F}}.
\end{multline}
We use the identity $(A\times B)\cdot (C\times D)=(A\cdot C)(B\cdot D)-(B\cdot C)(A\cdot D)$ to have 
\begin{equation*}
\mathbb{T}(u_{\mc{F}}) \nu\cdot \phi_{\mc{F}}= [\mathbb{T}(u_{\mc{F}}) \nu \cdot \nu](\phi_{\mc{F}}\cdot \nu) + [\mathbb{T}(u_{\mc{F}}) \nu \times \nu]\cdot (\phi_{\mc{F}}\times \nu),
\end{equation*}
\begin{equation*}
\mathbb{T}(u_{\mc{F}}) \nu\cdot \phi_{\mc{S}}= [\mathbb{T}(u_{\mc{F}}) \nu \cdot \nu](\phi_{\mc{S}}\cdot \nu) + [\mathbb{T}(u_{\mc{F}}) \nu \times \nu]\cdot (\phi_{\mc{S}}\times \nu).
\end{equation*}
Now by using the definition of $V_T$ and  the boundary conditions \eqref{boundarycomp-1}--\eqref{boundarycomp-4}, we get
\begin{equation}\label{express2}
\int\limits_{\partial \Omega} (\mathbb{T}(u_{\mc{F}}) - p_{\mc{F}}\mathbb{I}) \nu\cdot \phi_{\mc{F}} = -\alpha \int\limits_{\partial \Omega} (u_{\mc{F}}\times \nu)\cdot (\phi_{\mc{F}}\times \nu),
\end{equation}
\begin{equation}\label{express3}
\int\limits_{\partial \mc{S}(t)} (\mathbb{T}(u_{\mc{F}}) - p_{\mc{F}}\mathbb{I}) \nu\cdot \phi_{\mc{F}}  = -\alpha \int\limits_{\partial \mc{S}(t)} [(u_{\mc{F}}-u_{\mc{S}})\times \nu]\cdot [(\phi_{\mc{F}}-\phi_{\mc{S}})\times \nu] +  \int\limits_{\partial \mc{S}(t)} (\mathbb{T}(u_{\mc{F}}) - p_{\mc{F}}\mathbb{I}) \nu\cdot \phi_{\mc{S}}.
\end{equation}
Using the rigid body equations \eqref{linear momentumcomp:body}--\eqref{angular momentumcomp:body} and some calculations, we obtain
\begin{equation}\label{express4}
\int\limits_{\partial \mc{S}(t)} (\mathbb{T}(u_{\mc{F}}) - p_{\mc{F}}\mathbb{I}) \nu\cdot \phi_{\mc{S}} = -\frac{d}{dt}\int\limits_{\mc{S}(t)} \rho_{\mc{S}}u_{\mc{S}}\cdot \phi_{\mc{S}} + \int\limits_{\mc{S}(t)} \rho_{\mc{S}}u_{\mc{S}}\cdot \frac{\partial}{\partial t} \phi_{\mc{S}} + \int\limits_{\mc{S}(t)} \rho_{\mc{S}}g_{\mc{S}}\cdot \phi_{\mc{S}}.
\end{equation}
Thus by combining the above relations \eqref{express1}--\eqref{express4} and then integrating from $0$ to $T$, we have 
\begin{multline}\label{weak-momentum}
- \int\limits_0^T\int\limits_{\mc{F}(t)} \rho_{\mc{F}}u_{\mc{F}}\cdot \frac{\partial}{\partial t}\phi_{\mc{F}} - \int\limits_0^T\int\limits_{\mc{S}(t)} \rho_{\mc{S}}u_{\mc{S}}\cdot \frac{\partial}{\partial t}\phi_{\mc{S}} - \int\limits_0^T\int\limits_{\mc{F}(t)} (\rho_{\mc{F}}u_{\mc{F}} \otimes u_{\mc{F}}) : \nabla \phi_{\mc{F}} + \int\limits_0^T\int\limits_{\mc{F}(t)} (\mathbb{T}(u_{\mc{F}}) - p_{\mc{F}}\mathbb{I}) : \mathbb{D}(\phi_{\mc{F}}) \\
 + \alpha \int\limits_0^T\int\limits_{\partial \Omega} (u_{\mc{F}}\times \nu)\cdot (\phi_{\mc{F}}\times \nu) + \alpha \int\limits_0^T\int\limits_{\partial \mc{S}(t)} [(u_{\mc{F}}-u_{\mc{S}})\times \nu]\cdot [(\phi_{\mc{F}}-\phi_{\mc{S}})\times \nu] \\
 = \int\limits_0^T\int\limits_{\mc{F}(t)}\rho_{\mc{F}}g_{\mc{F}}\cdot \phi_{\mc{F}} + \int\limits_0^T\int\limits_{\mc{S}(t)} \rho_{\mc{S}}g_{\mc{S}}\cdot \phi_{\mc{S}} + \int\limits_{\mc{F}(0)} (\rho_{\mc{F}}u_{\mc{F}}\cdot \phi_{\mc{F}})(0) + \int\limits_{\mc{S}(0)} (\rho_{\mc{S}}u_{\mc{S}}\cdot \phi_{\mc{S}})(0).
\end{multline}
\begin{remark}
We stress that in the definition of the set $U_T$ (in Definition~\ref{weaksolution-main} below)
the function $u_{\mc{F}}$ on $\Omega$ is a regular  extension of the velocity field $u_{\mc{F}}$  from $\mc{F}(t)$ to $\Omega$, see \eqref{Eu1}--\eqref{Eu2}. Correspondingly, $u_{\mc{S}}\in \mc{R}$ denotes a rigid  extension from $\mc{S}(t)$ to $\Omega$ as in \eqref{Svel1}. Moreover, by the density $\rho _{\mc{F}}$ in \eqref{NO2}, we mean an extended fluid density  $\rho _{\mc{F}}$
 from $\mc{F}(t)$ to $\Omega$ by zero, see \eqref{Erho}--\eqref{15:21}. Correspondingly, $\rho_{\mc{S}}$ refers to an extended solid density from $\mc{S}(t)$ to $\Omega$ by zero.
\end{remark}
\begin{remark}\label{u-initial}
 In \eqref{NO2}, the initial fluid density $\rho _{\mc{F}_0}$ on $\Omega$ represents a zero  extension of $\rho _{\mc{F}_0}$ (defined in \eqref{initial cond})
 from $\mc{F}_0$ to $\Omega$. Correspondingly, $\rho_{\mc{S}_0}$ in equation \eqref{NO5} stands for an extended initial solid density (defined in \eqref{eq:vrBeq}) from $\mc{S}_0$ to $\Omega$ by zero. Obviously, $q_{\mc{F}_0}$ refers to an extended initial momentum from $\mc{F}_0$ to $\Omega$ by zero and $u_{\mc{S}_0}\in \mc{R}$ denotes a rigid extension from $\mc{S}_0$ to $\Omega$ as in \eqref{Svel2}.
\end{remark}
\begin{defin}\label{weaksolution-main}
Let $T> 0$, and let $\Omega$ and $\mc{S}_0 \Subset \Omega$ be two regular bounded domains of $\mathbb{R}^3$.  A triplet $(\mc{S},\rho,u)$ is a finite energy weak solution to system \eqref{mass:comfluid}--\eqref{initial cond:comp} if the following holds:
\begin{itemize}
\item $\mc{S}(t) \Subset \Omega$ is a bounded domain of $\mathbb{R}^3$ for all $t\in [0,T)$ such that 
\begin{equation}\label{NO1}
\chi_{\mc{S}}(t,x) = \mathds{1}_{\mc{S}(t)}(x) \in L^{\infty}((0,T) \times \Omega). 
\end{equation}
\item $u$ belongs to the following space

\begin{equation*}
U_{T}=
\left\{\!\begin{aligned}
&u \in L^{2}(0,T; L^2(\Omega)) \mbox{ such that there exist }u_{\mc{F}}\in L^2(0,T; H^1(\Omega)),\, u_{\mc{S}}\in L^{2}(0,T; \mc{R})\\ &\mbox{satisfying }u(t,\cdot)=u_{\mc{F}}(t,\cdot)\mbox{ on }\mc{F}(t),\quad u(t,\cdot)=u_{\mc{S}}(t,\cdot)\mbox{ on }\mc{S}(t)\mbox{ with }\\ &u_{\mc{F}}(t,\cdot)\cdot \nu = u_{\mc{S}}(t,\cdot)\cdot \nu \mbox{ on }\partial\mc{S}(t),\ u_{\mc{F}}\cdot \nu=0 \mbox{ on }\partial\Omega\mbox{ for a.e }t\in [0,T]
\end{aligned}\right\}.
\end{equation*}
\item $\rho \geq 0$, $\rho \in L^{\infty}(0,T; L^{\gamma}(\Omega))$ with $\gamma>3/2$, $\rho|u|^2 \in L^{\infty}(0,T; L^1(\Omega))$, where
\begin{equation*}
\rho= (1-\mathds{1}_{\mc{S}})\rho_{\mc{F}} + \mathds{1}_{\mc{S}}\rho_{\mc{S}},\quad u= (1-\mathds{1}_{\mc{S}})u_{\mc{F}} + \mathds{1}_{\mc{S}}u_{\mc{S}}.
\end{equation*}  
\item The continuity equation is satisfied in the weak sense, i.e.\ 
\begin{equation}\label{NO2} 
\frac{\partial {\rho_{\mc{F}}}}{\partial t} + \operatorname{div}({\rho}_{\mc{F}} u_{\mc{F}}) =0 \mbox{ in }\, \mc{D}'([0,T)\times {\Omega}),\quad \rho_{\mc{F}}(0,x)=\rho_{\mc{F}_0}(x),\ x\in \Omega.
\end{equation} 
Also, a renormalized continuity equation holds in a weak sense, i.e.\ 
\begin{equation}\label{NO3} 
\partial_t b(\rho_{\mc{F}}) + \operatorname{div}(b(\rho_{\mc{F}})u_{\mc{F}}) + (b'(\rho_{\mc{F}})-b(\rho_{\mc{F}}))\operatorname{div}u_{\mc{F}}=0 \mbox{ in }\, \mc{D}'([0,T)\times {\Omega}) ,
\end{equation} 
for any $b\in C([0,\infty)) \cap C^1((0,\infty))$ satisfying
\begin{equation}\label{eq:b}
|b'(z)|\leq cz^{-\kappa_0},\, z\in (0,1],\ \kappa_0 <1, \qquad  |b'(z)|\leq cz^{\kappa_1},\, z\geq 1,\ -1<\kappa_1 <\infty.
\end{equation}
\item The transport of $\mc{S}$ by the rigid vector field $u_{\mc{S}}$ holds (in the weak sense)
\begin{equation}\label{NO4}
\frac{\partial {\chi}_{\mc{S}}}{\partial t} + \operatorname{div}(u_{\mc{S}}\chi_{\mc{S}}) =0 \, \mbox{ in }(0,T)\times {\Omega},\quad \chi_{\mc{S}}(0,x)=\mathds{1}_{\mc{S}_0}(x),\ x\in \Omega.
\end{equation}
\item The density $\rho_{\mc{S}}$ of the rigid body $\mc{S}$ satisfies (in the weak sense)
\begin{equation}\label{NO5}
\frac{\partial {\rho}_{\mc{S}}}{\partial t} + \operatorname{div}(u_{\mc{S}}\rho_{\mc{S}}) =0 \, \mbox{ in }(0,T)\times {\Omega},\quad \rho_{\mc{S}}(0,x)=\rho_{\mc{S}_0}(x),\ x\in \Omega.
\end{equation}
\item Balance of linear momentum holds in a weak sense, i.e.\ for all  $\phi \in V_{T}$ the relation \eqref{weak-momentum} holds.
\item The following energy inequality holds for almost every $t\in (0,T)$:
\begin{multline}\label{energy}
\int\limits_{\mc{F}(t)}\frac{1}{2} \rho_{\mc{F}}|u_{\mc{F}}(t,\cdot)|^2 + \int\limits_{\mc{S}(t)} \frac{1}{2} \rho_{\mc{S}}|u_{\mc{S}}(t,\cdot)|^2 + \int\limits_{\mc{F}(t)} \frac{a_{\mc{F}}}{\gamma-1}\rho_{\mc{F}}^{\gamma}+ \int\limits_0^t\int\limits_{\mc{F}(\tau)} \Big(2\mu_{\mc{F}} |\mathbb{D}(u_{\mc{F}})|^2 + \lambda_{\mc{F}} |\operatorname{div} u_{\mc{F}}|^2\Big) \\
 + \alpha \int\limits_0^t\int\limits_{\partial \Omega} |u_{\mc{F}}\times \nu|^2 
+ \alpha \int\limits_0^t\int\limits_{\partial \mc{S}(\tau)} |(u_{\mc{F}}-u_{\mc{S}})\times \nu|^2\\ \leq \int\limits_0^t\int\limits_{\mc{F}(\tau)}\rho_{\mc{F}}g_{\mc{F}}\cdot u_{\mc{F}} + \int\limits_0^t\int\limits_{\mc{S}(\tau)} \rho_{\mc{S}}g_{\mc{S}}\cdot u_{\mc{S}} + \int\limits_{\mc{F}_0}\frac{1}{2} \frac{|q_{\mc{F}_0}|^2}{\rho_{\mc{F}_0}}
   + \int\limits_{\mc{S}_0} \frac{1}{2}\rho_{\mc{S}_0}|u_{\mc{S}_0}|^2 + \int\limits_{\mc{F}_0} \frac{a_{\mc{F}}}{\gamma-1}\rho_{\mc{F}_0}^{\gamma}.
\end{multline}
\end{itemize}
  \end{defin}
  \begin{remark}\label{16:37}

  We note that our continuity equation \eqref{NO2} is different from the corresponding one in \cite{F4}. We have to work with $u_{\mc{F}}$ instead of $u$ because of the  Navier boundary condition. The reason is that we need the  $H^{1}(\Omega)$ regularity of the velocity in order to achieve the validity of the continuity equation in $\Omega$. Observe that $u \in L^{2}(0,T; L^2(\Omega))$ but the  extended fluid velocity has better regularity, in particular, $u_{\mc{F}}\in L^2(0,T; H^1(\Omega))$, see \eqref{Eu1}--\eqref{Eu2}.
  \end{remark}
  \begin{remark}
  In the weak formulation \eqref {weak-momentum}, we need to distinguish between the  fluid velocity $u_{\mc{F}}$ and the solid velocity $u_{\mc{S}}$. Due to the presence of the  discontinuities  in the tangential components of $u$  and $\phi$, neither $\partial_t \phi$ nor $\mb{D}(u)$, $\mb{D}(\phi)$ belong to $L^2(\Omega)$. That's why it is not possible to write \eqref {weak-momentum} in a global and condensed form (i.e.\ integrals over $\Omega$).
   \end{remark}
   
   \begin{remark}
   Let us mention that in the whole paper we assume the regularity of domains $\Omega$ and $\mc{S}_0$ as  $C^{2+\kappa}$, $\kappa >0$. However, we expect that our assumption on the regularity of the domain can be relaxed to a less regular domain like in the work of Kuku\v cka \cite{kuku}. 
   \end{remark}
    \subsection{Discussion and main result}
The mathematical analysis of systems describing the motion of a rigid body in a viscous \textit{incompressible} fluid is nowadays well developed.  The proof of existence of weak solutions until a first collision can be found in several papers, see \cite{CST,DEES1,GLSE,HOST,SER3}. Later, the possibility of collisions in the case of a weak solution was included, see \cite{F3,SST}.  Moreover, it was shown that  under Dirichlet boundary conditions collisions cannot occur, which is paradoxical with respect to real situations; for details see \cite{HES, HIL, HT}. Neustupa and Penel showed that under a prescribed motion of the rigid body and under Navier type of boundary conditions collision can occur  \cite{NP}.  After that G\'erard-Varet and Hillairet showed that to construct collisions  one needs    to assume less regularity of the domain or different boundary conditions, see e.g.\ \cite{GH,MR3272367,GHC}. In the case of very high viscosity, under the assumption that rigid bodies are not touching each other or not touching the boundary at the initial time, it was shown that collisions cannot occur in finite time, see \cite{FHN}.
For an introduction we refer to the problem of a fluid coupled with a rigid body  in the  work by Galdi, see \cite{G2}.
Let us also mention results on strong solutions, see e.g.\ \cite{GGH13,T, Wa}. 
 
A few results are available on the motion of a rigid structure in a \textit{compressible} fluid with  Dirichlet boundary conditions.  The existence of strong solutions in the  $L^2$ framework for small data up to a collision was shown in \cite{BG,roy2019stabilization}. The existence of strong solutions in the  $L^p$ setting based on $\mathcal{R}$-bounded operators was applied in the barotropic case \cite{HiMu} and   in the full system \cite{HMTT}.

The existence of a weak solution, also up to a collision but without smallness assumptions, was shown in \cite{DEES2}. Generalization of this result allowing collisions was given in \cite{F4}.
The weak-strong uniqueness of a compressible fluid with a rigid body can be found in  \cite{KrNePi2}. Existence of weak solutions in the case of Navier boundary conditions is not available; we explore it in this article.


 For many years, the \emph{no-slip boundary
condition}
has been the most widely used given its success in reproducing the
standard velocity profiles for incompressible/compressible viscous fluids.
Although the no-slip hypothesis seems to be in good agreement with
experiments, it leads to certain rather surprising conclusions. As we have already mentioned before, the
most striking one being the absence of collisions of rigid objects
immersed in a linearly viscous fluid \cite{HES,HIL}.

The so-called \emph{Navier boundary
conditions}, which allow for slip, 
offer more freedom and are
likely to provide a physically acceptable solution at least to some
of the paradoxical phenomena resulting from the no-slip boundary
condition, see, e.g.\ Moffat \cite{MOF}. Mathematically, the behavior of the tangential component $[{u}]_{tan}$ is a
delicate issue.




\smallskip

The main result of our paper (Theorem~\ref{exist:upto collision}) asserts local-in-time existence of a weak solution for the system involving the motion of a rigid body in a compressible fluid in the case of Navier boundary conditions at the interface and at the outer boundary. It is the first result in the context of rigid body-compressible fluid interaction in the case of Navier type of boundary conditions. Let us mention that the main difficulty which arises in our problem is the jump  in the velocity through the interface boundary between the rigid body and the compressible fluid. This difficulty cannot be resolved by the approach introduced in the work of Desjardins, Esteban \cite{DEES2}, or Feireisl \cite{F4} since they consider the velocity field continuous through the interface. Moreover, G\'erard-Varet, Hillairet \cite{MR3272367} and  Chemetov, Ne\v casov\' a \cite{CN} cannot be used directly as they are in the  incompressible framework. 
Our weak solutions  have to satisfy the jump of the velocity field through the interface boundary. 

Our idea is to introduce a new  approximate scheme which combines the theory of compressible fluids introduced by P. L. Lions \cite{LI4} and then developed by Feireisl \cite{EF70} to get the strong convergence of the density (renormalized continuity equations, effective viscous flux, artificial pressure) together with ideas from  G\'erard-Varet, Hillairet \cite{MR3272367} and  Chemetov, Ne\v casov\' a \cite{CN} concerning a penalization of the jump.   We remark that such type of difficulties do not arise for the existence of weak solutions of   compressible fluids without a rigid body neither for Dirichlet nor for Navier type of boundary conditions. 

  Let us mention the main issues that arise in the analysis of our system and the methodology that we adapt to deal with it:
  \begin{itemize}
  \item It is not possible to define a uniform velocity field as in \cite{DEES2, F4}
  due to the presence of a discontinuity through the interface of interaction. This is the reason why we introduce the regularized fluid velocity $u_{\mc{F}}$ and the solid velocity $u_{\mc{S}}$ and why we treat them separately.
  \item We introduce approximate problems and recover the original problem as a limit of the approximate ones. In fact, we consider several levels of approximations; in each level we ensure that our solution and the test function do not show a jump across the interface so that we can use several available techniques of compressible fluids (without body). In the limit, however, the discontinuity at the interface is recovered. The particular construction of the test functions is a delicate and crucial issue in our proof, which we outline in \cref{approx:test}. 
  \item Recovering the velocity fields for the solid and fluid parts separately is also a challenging issue. We introduce a penalization in such a way that, in the last stage of the limiting process, this term allows us to recover the rigid velocity of the solid, see \eqref{12:04}--\eqref{re:solidvel}.
  The introduction of an appropriate extension operator helps us to recover the fluid velocity, see \eqref{ext:fluid}--\eqref{re:fluidvel}.
  \item Since we consider the compressible case, our penalization with parameter $\delta>0$, see \eqref{approx3}, is  different from  the penalization for the incompressible fluid in  \cite{MR3272367}. 
   
  \item Due to the Navier-slip boundary condition, no $H^1$ bound on the velocity  on the whole domain is possible. We can only obtain the $H^1$ regularity of the extended velocities of  the fluid and solid parts separately. We have introduced an artificial viscosity that vanishes asymptotically on the solid part so that we can capture the $H^1$ regularity for the fluid part (see step 1 of the proof of \cref{exist:upto collision} in Section \ref{S4}).
  \item We have already mentioned that the main difference with \cite{MR3272367} is that we consider compressible fluid whereas they have considered an incompressible fluid. We have encountered several issues that are present due to the compressible nature of the fluid (vanishing viscosity in the continuity equation, recovering the renormalized continuity equation, identification of the pressure). One important point is to see that passing to the limit as $\delta$ tends to zero in the transport for the rigid body is not obvious because our velocity field does not have regularity $L^{\infty}(0,T,L^2(\Omega))$ as in the incompressible case see e.g.\ \cite{MR3272367}  but  $L^2(0,T,L^2(\Omega))$ (here we have $\sqrt{\rho}u \in L^{\infty}(0,T,L^2(\Omega))$ only). To handle this problem, we apply \cref{sequential2} in the $\delta$-level, see \cref{S4}.
\end{itemize}

  
  Next we present the main result of our paper. 
  \begin{theorem}\label{exist:upto collision}
  Let $\Omega$ and $\mc{S}_0 \Subset \Omega$ be two regular bounded domains of $\mathbb{R}^3$. Assume that for some $\sigma > 0$
  \begin{equation*}
  \operatorname{dist}(\mc{S}_0,\partial\Omega) > 2\sigma.
  \end{equation*}
  Let  $g_{\mc{F}}$, $g_{\mc{S}} \in L^{\infty}((0,T)\times \Omega)$ and the pressure $p_{\mc{F}}$ be determined by \eqref{const-law} with $\gamma >3/2$. Assume that the initial data (defined in the sense of \cref{u-initial}) satisfy 
  \begin{align} \label{init}
  \rho_{\mc{F}_{0}} \in L^{\gamma}(\Omega),\quad \rho_{\mc{F}_{0}} \geq 0 &\mbox{ a.e. in }\Omega,\quad \rho_{\mc{S}_0}\in L^{\infty}(\Omega),\quad \rho_{\mc{S}_0}>0\mbox{ a.e. in }\mc{S}_0,\\
  \label{init1}
  q_{\mc{F}_{0}} \in L^{\frac{2\gamma}{\gamma+1}}(\Omega), \quad q_{\mc{F}_{0}}\mathds{1}_{\{\rho_{\mc{F}_0}=0\}}=0 &\mbox{ a.e. in }\Omega,\quad \dfrac{|q_{\mc{F}_{0}}|^2}{\rho_{\mc{F}_0}}\mathds{1}_{\{\rho_{\mc{F}_0}>0\}}\in L^1(\Omega),\\
  \label{init2}
  u_{\mc{S}_0}= \ell_0+ \omega_0 \times x& \quad\forall\ x \in \Omega \mbox{ with }\ell_0,\ \omega_0 \in \mb{R}^3.
  \end{align}
  Then there exists $T > 0$  (depending only on $\rho_{\mc{F}_0}$, $\rho_{\mc{S}_0}$, $q_{\mc{F}_0}$, $u_{\mc{S}_0}$, $g_{\mc{F}}$, $g_{\mc{S}}$,  $\operatorname{dist}(\mc{S}_0,\partial\Omega)$) such that   a finite energy weak solution to \eqref{mass:comfluid}--\eqref{initial cond:comp} exists on $[0,T)$. Moreover, 
\begin{equation*}
  \mc{S}(t) \Subset \Omega,\quad\operatorname{dist}(\mc{S}(t),\partial\Omega) \geq \frac{3\sigma}{2},\quad \forall \ t\in [0,T].
  \end{equation*}
  \end{theorem}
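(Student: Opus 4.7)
The plan is to construct a weak solution as the limit of a multi-parameter approximate scheme that marries the Lions--Feireisl theory for compressible fluids with a penalization enforcing rigidity on the solid region, in the spirit of G\'erard-Varet--Hillairet and Chemetov--Ne\v{c}asov\'{a}. I would pose the approximate problem globally on $\Omega$ with a single continuous velocity field $u$, so that the standard compressible machinery applies. Three regularizations enter: an artificial viscosity $\varepsilon\Delta\rho$ on the continuity equation, an artificial pressure $\eta\rho^\beta$ with $\beta$ large, and a penalization parameter $\delta>0$ that drives $u$ on the smoothed solid region to a rigid field. Concretely, I would smooth the solid characteristic by transporting $\mathds{1}_{\mc{S}_0}$ along a mollified velocity to obtain $\chi_{\mc{S}}^\delta$, and add a term of the form $\tfrac{1}{\delta}\chi_{\mc{S}}^\delta\bigl(u-\Pi_{\mc{R}}u\bigr)$ in the momentum equation, where $\Pi_{\mc{R}}$ is the orthogonal projection onto $\mc{R}$. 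I would also include an auxiliary viscosity that degenerates on the solid part so that, after Korn's inequality adapted to the Navier conditions, one recovers $H^1$ control of $u$ on the fluid side while still permitting a tangential jump across $\partial\mc{S}(t)$ to emerge in the limit.

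Existence at the fully regularized level follows from a Galerkin scheme on a basis of $H^1(\Omega)$ fields with $u\cdot\nu=0$ on $\partial\Omega$; the energy identity, now containing $\alpha\|u\times\nu\|_{L^2(\partial\Omega)}^2$ and the penalization, closes at each finite level, and letting the Galerkin dimension tend to infinity yields a regularized solution. I would then peel off the parameters in three successive limits. Sending $\varepsilon\to 0$ removes the continuity-equation regularization and yields a renormalized solution thanks to DiPerna--Lions. Sending $\eta\to 0$ eliminates the artificial pressure; here the effective viscous flux identity together with the renormalized continuity equation upgrades the weak convergence of $\rho^\gamma$ to strong convergence of $\rho$, and the hypothesis $\gamma>3/2$ enters precisely at this stage. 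Finally, sending $\delta\to 0$ removes the penalization: the uniform bound $\tfrac{1}{\delta}\|\chi_{\mc{S}}^\delta(u-\Pi_{\mc{R}}u)\|_{L^2}^2\leq C$ forces $u$ restricted to $\mc{S}(t)$ to be rigid, producing $u_{\mc{S}}\in\mc{R}$, while a suitable extension operator from $\mc{F}(t)$ to $\Omega$ (see \eqref{Eu1}--\eqref{Eu2}) produces $u_{\mc{F}}\in L^2(0,T;H^1(\Omega))$ with matching normal trace on $\partial\mc{S}(t)$. Throughout, a short-time bound on $\|h'\|_\infty+\|\omega\|_\infty$ inherited from the energy inequality ensures $\operatorname{dist}(\mc{S}(t),\partial\Omega)\geq \tfrac{3\sigma}{2}$ on some interval $[0,T]$ and rules out collisions. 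Test functions $\phi\in V_T$ with tangential jumps have to be approximated by continuous test functions $\phi^\delta$ for the regularized problem via a cut-off-and-extension procedure near the smoothed interface, which is the delicate construction flagged in \cref{approx:test}.

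The main obstacle is the $\delta\to 0$ limit, in particular the transport equation for $\chi_{\mc{S}}^\delta$. In the incompressible setting of \cite{MR3272367} one has $u\in L^\infty(0,T;L^2(\Omega))$ and the standard DiPerna--Lions transport theory identifies the limit indicator directly; in our compressible setting only $\sqrt{\rho}\,u\in L^\infty(0,T;L^2)$ and $u\in L^2(0,T;L^2)$ are available, so a refined sequential compactness result (\cref{sequential2}) is indispensable to extract the correct limit of $\chi_{\mc{S}}^\delta$ and the compatibility $\partial_t\chi_{\mc{S}}+\operatorname{div}(u_{\mc{S}}\chi_{\mc{S}})=0$ in \eqref{NO4}. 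A related subtlety is the identification of the limits of the boundary integrals on the moving interface $\partial\mc{S}^\delta(t)$, which demands uniform trace estimates compatible with the Navier condition together with the pointwise convergence of the interface geometry. Once these ingredients are assembled, combining the limit of the penalization with the limit of the momentum equation produces the jump boundary integral $\alpha\int_0^T\!\!\int_{\partial\mc{S}(t)}|(u_{\mc{F}}-u_{\mc{S}})\times\nu|^2$ in \eqref{weak-momentum} and delivers the finite-energy weak solution on $[0,T)$ asserted by the theorem.
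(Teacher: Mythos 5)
Your architecture coincides with the paper's: a Faedo--Galerkin level, an artificial viscosity $\varepsilon\Delta\rho$ in the continuity equation, an artificial pressure of order $\rho^\beta$, a penalization $\tfrac1\delta\chi_{\mc S}^\delta(u-P_{\mc S}u)$ driving the velocity to a rigid field on the (transported) solid region, a viscosity that degenerates like $\delta^2$ on the solid so that only the fluid part retains an $H^1$ bound, extension operators to recover $u_{\mc F}$ and $u_{\mc S}$ separately, a refined sequential-continuity result for the transport of $\chi_{\mc S}^\delta$ under mere $L^2_tL^2_x$ bounds on $u^\delta$, and the approximation of discontinuous test functions in $V_T$ by continuous ones. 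The one organizational difference is that you detach the artificial pressure into its own parameter $\eta$ and remove it \emph{before} the penalization, whereas the paper ties it to $\delta$ and removes both simultaneously; this is admissible but buys you nothing, since the hard density-compactness argument cannot be confined to the $\eta$-limit.

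That is where the genuine gap lies. In your final step you treat $\delta\to 0$ as if it only concerned the transport equation, the boundary integrals and the penalization term, with the pressure already identified. But $\rho^\delta$ is a new sequence in $\delta$: after the artificial pressure is gone the uniform bounds give only $\rho^\delta_{\mc F}\rightharpoonup\rho_{\mc F}$ weakly in $L^{\gamma+\theta}$ with $\theta=\tfrac23\gamma-1$, and to pass to the limit in the nonlinear pressure you must prove $\overline{\rho_{\mc F}^\gamma}=\rho_{\mc F}^\gamma$, i.e.\ strong convergence of $\rho^\delta_{\mc F}$. This requires rerunning the effective viscous flux identity together with the oscillation-defect-measure and cut-off ($T_k$) machinery at the $\delta$-level, and it is genuinely delicate here for two reasons you do not address: (i) for $3/2<\gamma<9/5$ one has $\gamma+\theta<2$, so $\rho_{\mc F}\notin L^2$ and the renormalized continuity equation for the limit is not directly available --- it must itself be recovered through the truncation argument; (ii) the coefficient $2\mu^\delta+\lambda^\delta$ degenerates on the solid as $\delta^2$, so the flux identity must be localized to the fluid region, which the paper arranges by working with the zero-extension $\rho^\delta_{\mc F}$ and the coefficient $a^\delta=a_{\mc F}(1-\chi^\delta_{\mc S})$ that annihilates the pressure on the solid. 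Relatedly, your claim that $\gamma>3/2$ ``enters precisely'' at the artificial-pressure limit is misplaced: at that stage the density still enjoys $L^\beta$ integrability with $\beta$ large, and the constraint $\gamma>3/2$ is what makes the final limit work. Finally, your penalization should use the mass-weighted projection $P_{\mc S}^\delta$ built from $\rho^\delta\chi_{\mc S}^\delta$ rather than a plain $L^2$ projection onto $\mc R$, as this is what makes the penalization compatible with the energy identity and lets the limit reproduce the linear and angular momentum balance of the body.
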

  The outline of the paper is as follows. We introduce three levels of  approximation schemes in \cref{F1351}. In \cref{S5}, we describe some results on the transport equation, which are needed in all the levels of approximation. The existence results of approximate solutions have been proved in \cref{S3}. \cref{sec:Galerkin} and \cref{14:14} are dedicated to the construction  and convergence analysis of the Faedo-Galerkin scheme associated to the finite dimensional approximation level. We discuss the limiting system associated to the vanishing viscosity in \cref{14:18}. \cref{S4} is devoted to the main part: we derive the limit as the parameter $\delta$ tends  to zero.

\section{Approximate Solutions}\label{F1351}

In this section, we present the approximate problems by combining the penalization method, introduced in \cite{MR3272367}, and the approximation scheme developed in \cite{MR1867887}
along with a careful treatment of the boundary terms of the rigid body to solve the original problem \eqref{mass:comfluid}--\eqref{initial cond:comp}. There are three levels of approximations with the parameters $N,\varepsilon,\delta$.
Let us briefly explain these approximations:
\begin{itemize}
    \item  The parameter $N$ is connected with solving the momentum equation using the Faedo-Galerkin approximation. 
    \item The parameter $\varepsilon > 0$ is connected with a new diffusion term $\varepsilon \Delta \rho $ in the continuity equation together with a term $\varepsilon \nabla \rho \nabla u$ in the momentum equation.
    \item The parameter $\delta > 0$ is connected with the approximation in the viscosities (see \eqref{approx-viscosity}) together with a penalization of the boundary of the rigid body to get smoothness through the interface (see \eqref{approx3}) and together with the artificial pressure containing the approximate coefficient, see \eqref{approx-press}.
\end{itemize}
At first, we state the existence results for the different levels of approximation schemes and then we will prove these later on. We start with the $\delta$-level of approximation via an artificial pressure. 
We are going to consider the following approximate problem: Let $\delta>0$. Find a triplet $(S^{\delta}, \rho^{\delta}, u^{\delta})$ such that 
\begin{itemize}
\item  $\mc{S}^{\delta}(t) \Subset \Omega$ is a bounded, regular domain for all $t \in [0,T]$  with 
\begin{equation}\label{approx1}
\chi^{\delta}_{\mc{S}}(t,x)= \mathds{1}_{\mc{S}^{\delta}(t)}(x) \in L^{\infty}((0,T)\times \Omega) \cap C([0,T];L^p(\Omega)),  \, \forall \, 1 \leq p < \infty.
\end{equation}
\item The velocity field $u^{\delta} \in  L^2(0,T; H^1(\Omega))$, and the density function $\rho^{\delta} \in L^{\infty}(0,T; L^{\beta}(\Omega))$, $\rho^{\delta}\geq 0$   satisfy
\begin{equation}\label{approx2} 
\frac{\partial {\rho}^{\delta}}{\partial t} + \operatorname{div}({\rho}^{\delta} u^{\delta}) =0 \mbox{ in } \mc{D}'([0,T)\times {\Omega}).
\end{equation} 
\item 
For all $\phi \in H^1(0,T; L^{2}(\Omega)) \cap L^r(0,T; W^{1,{r}}(\Omega))$, where $r=\max\left\{\beta+1, \frac{\beta+\theta}{\theta}\right\}$, $\beta \geq \max\{8,\gamma\}$ and $\theta=\frac{2}{3}\gamma -1$ with $\phi\cdot\nu=0$ on $\partial\Omega$ and  $\phi|_{t=T}=0$, the following holds:
\begin{multline}\label{approx3}
- \int\limits_0^T\int\limits_{\Omega} \rho^{\delta} \left(u^{\delta}\cdot \frac{\partial}{\partial t}\phi +  u^{\delta} \otimes u^{\delta} : \nabla \phi\right) + \int\limits_0^T\int\limits_{\Omega} \Big(2\mu^{\delta}\mathbb{D}(u^{\delta}):\mathbb{D}(\phi) + \lambda^{\delta}\operatorname{div}u^{\delta}\mathbb{I} : \mathbb{D}(\phi)- p^{\delta}(\rho^{\delta})\mathbb{I} : \mathbb{D}(\phi)\Big) \\
 + \alpha \int\limits_0^T\int\limits_{\partial \Omega} (u^{\delta} \times \nu)\cdot (\phi \times \nu) + \alpha \int\limits_0^T\int\limits_{\partial \mc{S}^{\delta}(t)} [(u^{\delta}-P^{\delta}_{\mc{S}}u^{\delta})\times \nu]\cdot [(\phi-P^{\delta}_{\mc{S}}\phi)\times \nu]
  + \frac{1}{\delta}\int\limits_0^T\int\limits_{\Omega} \chi^{\delta}_{\mc{S}}(u^{\delta}-P^{\delta}_{\mc{S}}u^{\delta})\cdot (\phi-P^{\delta}_{\mc{S}}\phi)\\ = \int\limits_0^T\int\limits_{\Omega}\rho^{\delta} g^{\delta} \cdot \phi
 + \int\limits_{\Omega} (\rho^{\delta} u^{\delta} \cdot \phi)(0),
\end{multline}
where $\mc{P}_{\mc{S}}^{\delta}$ is defined in \eqref{approx:projection} below.
\item ${\chi}^{\delta}_{\mc{S}}(t,x)$ satisfies (in the weak sense)
\begin{equation}\label{approx4}
\frac{\partial {\chi}^{\delta}_{\mc{S}}}{\partial t} + P^{\delta}_{\mc{S}}u^{\delta} \cdot \nabla \chi^{\delta}_{\mc{S}} =0\, \mbox{ in }(0,T)\times {\Omega},\quad  \chi^{\delta}_{\mc{S}}|_{t=0}= \mathds{1}_{\mc{S}_0}\, \mbox{ in } {\Omega}.
\end{equation}
\item $\rho^{\delta}{\chi}^{\delta}_{\mc{S}}(t,x)$ satisfies (in the weak sense)
\begin{equation}\label{approx5}
\frac{\partial }{\partial t}(\rho^{\delta}{\chi}^{\delta}_{\mc{S}}) + P^{\delta}_{\mc{S}}u^{\delta} \cdot \nabla (\rho^{\delta}{\chi}^{\delta}_{\mc{S}})=0\, \mbox{ in }(0,T)\times {\Omega},\quad  (\rho^{\delta}{\chi}^{\delta}_{\mc{S}})|_{t=0}= \rho_0^{\delta}\mathds{1}_{\mc{S}_0}\, \mbox{ in } {\Omega}.
\end{equation}
\item Initial data are given by
\begin{equation}\label{approx:initial}
\rho^{\delta}(0,x)=\rho_0^{\delta}(x),\quad \rho^{\delta} u^{\delta}(0,x) = q_0^{\delta}(x),\quad x\in \Omega .
\end{equation}
\end{itemize}
Above we have used the following quantities:
\begin{itemize}
\item The density of the volume force is defined as
\begin{equation*}
g^{\delta}=(1-\chi^{\delta}_{\mc{S}})g_{\mc{F}} + \chi^{\delta}_{\mc{S}}g_{\mc{S}}.
\end{equation*}
\item The artificial pressure is given by 
\begin{equation}\label{approx-press}
p^{\delta}(\rho)= a^{\delta}\rho^{\gamma} + {\delta} \rho^{\beta},\quad\mbox{ with }\quad a^{\delta} = a_{\mc{F}} (1-\chi^{\delta}_{\mc{S}}),
\end{equation}
where $a_{\mc{F}}>0$ and  the exponents $\gamma$ and $\beta$ satisfy $\gamma > 3/2, \ \beta \geq \max\{8,\gamma\}$.
\item The viscosity coefficients are given by
\begin{equation}\label{approx-viscosity}
 \mu^{\delta} = (1-\chi^{\delta}_{\mc{S}})\mu_{\mc{F}} + {\delta}^2\chi^{\delta}_{\mc{S}},\quad  \lambda^{\delta} = (1-\chi^{\delta}_{\mc{S}})\lambda_{\mc{F}} + {\delta}^2\chi^{\delta}_{\mc{S}}\quad\mbox{ so that }\quad\mu^{\delta} >0,\ 2\mu^{\delta}+ 3\lambda^{\delta} \geq 0.
\end{equation}
\item The orthogonal projection to rigid fields, $P^{\delta}_{\mc{S}}:L^2(\Omega)\rightarrow L^2(\mc{S}^{\delta}(t))$,  is such that, for all $t\in [0,T]$ and $u\in L^2(\Omega)$, we have $P^{\delta}_{\mc{S}}u \in \mc{R}$ and it is given by
\begin{equation}\label{approx:projection}
P^{\delta}_{\mc{S}}u(t,x)= \frac{1}{m^{\delta}} \int\limits_{\Omega} \rho^{\delta}\chi_{\mc{S}}^{\delta} u + \left((J^{\delta})^{-1} \int\limits_{\Omega}\rho^{\delta}\chi_{\mc{S}}^{\delta}((y-h^{\delta}(t)) \times u)\ dy \right)\times (x-h^{\delta}(t)), \quad \forall x\in \Omega,
\end{equation}
where $h^{\delta}$, $m^{\delta}$ and $J^{\delta}$ are defined as
\begin{align*}
    h^{\delta}(t) = \frac{1}{m^{\delta}} \int\limits_{\mathbb{R}^3} \rho^{\delta}\chi_{\mc{S}}^{\delta} x \ dx,\quad m^{\delta} = \int\limits_{\mathbb{R}^3} \rho^{\delta}\chi_{\mc{S}}^{\delta} \ dx, \\
    J^{\delta}(t) = \int\limits_{\mathbb{R}^3} \rho^{\delta}\chi_{\mc{S}}^{\delta}  \left[ |x-h^{\delta}(t)|^2\mathbb{I} - (x-h^{\delta}(t)) \otimes (x-h^{\delta}(t))\right] \ dx. 
\end{align*}
\end{itemize}
\begin{remark}
The penalization which we apply in our case is different from that in \cite{F4}. We do not use the high viscosity limit but our penalization  contains an $L^2$ penalization (see \eqref{approx3}), which is necessary because of  the discontinuity of the velocity field through the fluid-structure interface. Moreover, we consider a penalization of the  viscosity coefficients \eqref{approx-viscosity} together with  the additional regularity of the pressure, see \eqref{approx-press}. This approach is completely new. 
\end{remark}
A weak solution of problem \eqref{mass:comfluid}--\eqref{initial cond:comp} in the sense of \cref{weaksolution-main} will be obtained as a weak limit of  the solution $(\mc{S}^{\delta},\rho^{\delta},u^{\delta})$ of  system \eqref{approx1}--\eqref{approx:initial} as $\delta \rightarrow 0$. The existence result of the approximate system reads:
\begin{proposition}\label{thm:approxn-delta}
Let $\Omega$ and $\mc{S}_0 \Subset \Omega$ be two regular bounded domains of $\mathbb{R}^3$. Assume that for some $\sigma>0$
  \begin{equation*}
  \operatorname{dist}(\mc{S}_0,\partial\Omega) > 2\sigma.
  \end{equation*}
 Let  $g^{\delta}=(1-\chi^{\delta}_{\mc{S}})g_{\mc{F}} + \chi^{\delta}_{\mc{S}}g_{\mc{S}} \in L^{\infty}((0,T)\times \Omega)$ and
\begin{equation}\label{cc:dbg}
{\delta} >0,\ \gamma > 3/2, \ \beta \geq \max\{8,\gamma\}.
\end{equation}
Further, let the pressure $p^{\delta}$ be determined by \eqref{approx-press} and the viscosity coefficients $\mu^{\delta}$, $\lambda^{\delta}$ be given by \eqref{approx-viscosity}. Assume that the initial conditions satisfy 
\begin{align}
\rho_{0}^{\delta} &\in L^{\beta}(\Omega), \quad \rho_0^{\delta}\geq 0 \mbox{ a.e. in }\Omega,\quad \rho_0^{\delta}\mathds{1}_{\mc{S}_0}\in L^{\infty}(\Omega),\quad \rho_0^{\delta}\mathds{1}_{\mc{S}_0}>0\mbox{ a.e. in }\mc{S}_0,\label{rhonot}\\
&q_0^{\delta} \in L^{\frac{2\beta}{\beta+1}}(\Omega), \quad q_0^{\delta}\mathds{1}_{\{\rho_0^{\delta}=0\}}=0 \mbox{ a.e. in }\Omega,\quad \dfrac{|q_0^{\delta}|^2}{\rho_0^{\delta}}\mathds{1}_{\{\rho_0>0\}}\in L^1(\Omega)\label{qnot}.
\end{align}
Let the initial energy  
$$
E^{\delta}[\rho_0^{\delta},q_0^{\delta}] = \int\limits_{\Omega}\Bigg(\frac{1}{2} \frac{|q_0^{\delta}|^2}{\rho_0^{\delta}}\mathds{1}_{\{\rho_0^{\delta}>0\}} + \frac{a^{\delta}(0)}{\gamma-1}(\rho_0^{\delta})^{\gamma} + \frac{\delta}{\beta-1}(\rho_0^{\delta})^{\beta} \Bigg):=E^{\delta}_0$$
be uniformly bounded with respect to $\delta$. Then there exists $T > 0$ (depending only on $E^{\delta}_0$, $g_{\mc{F}}$, $g_{\mc{S}}$,  $\operatorname{dist}(\mc{S}_0,\partial\Omega)$) such that system \eqref{approx1}--\eqref{approx:initial} admits a  finite energy weak solution $(S^{\delta},\rho^{\delta},u^{\delta})$, which satisfies the following energy inequality:
\begin{multline}\label{10:45}
E^{\delta}[\rho ^{\delta}, q^{\delta}] +  \int\limits_0^T\int\limits_{\Omega} \Big(2\mu^{\delta}|\mathbb{D}(u^{\delta})|^2 + \lambda^{\delta}|\operatorname{div}u^{\delta}|^2\Big)  + \alpha \int\limits_0^T\int\limits_{\partial \Omega} |u^{\delta} \times \nu|^2
 + \alpha \int\limits_0^T\int\limits_{\partial \mc{S}^{\delta}(t)} |(u^{\delta}-P^{\delta}_{\mc{S}}u^{\delta})\times \nu|^2 \\+ \frac{1}{\delta}\int\limits_0^T\int\limits_{\Omega} \chi^{\delta}_{\mc{S}}|u^{\delta}-P^{\delta}_{\mc{S}}u^{\delta}|^2 \leq \int\limits_0^T\int\limits_{\Omega}\rho^{\delta}
 g^{\delta} \cdot u^{\delta}
 +  E_0^{\delta}.
\end{multline}
Moreover, 
\begin{equation*}
  \operatorname{dist}(\mc{S}^{\delta}(t),\partial\Omega) \geq {2\sigma},\quad \forall \ t\in [0,T],
  \end{equation*}
and the solution satisfies the following properties:
\begin{enumerate}
\item For $\theta=\frac{2}{3}\gamma-1$, $s=\gamma+\theta$,
\begin{equation}\label{rho:improved}
\|(a^{\delta})^{1/s}\rho^{\delta}\|_{L^{s}((0,T)\times\Omega)} + \delta^{\frac{1}{\beta+\theta}}\|\rho^{\delta}\|_{L^{\beta+\theta}((0,T)\times\Omega)} \leq c.
\end{equation}
\item The couple $(\rho^{\delta},u^{\delta})$ satisfies the identity
\begin{equation}\label{rho:renorm1}
\partial_t b(\rho^{\delta}) + \operatorname{div}(b(\rho^{\delta})u^{\delta})+[b'(\rho^{\delta})\rho^{\delta} - b(\rho^{\delta})]\operatorname{div}u^{\delta}=0,
\end{equation}
a.e.\ in $(0,T)\times\Omega$ for any $b\in C([0,\infty)) \cap C^1((0,\infty))$ satisfying \eqref{eq:b}.
\end{enumerate}
\end{proposition}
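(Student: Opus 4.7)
The plan is to obtain the $\delta$-level solution as the outcome of a nested limit of a Faedo--Galerkin scheme regularized by an artificial diffusion in the continuity equation, keeping $\delta>0$ fixed throughout so that the artificial pressure $\delta\rho^{\beta}$ with $\beta\geq 8$ supplies the high integrability of the density. Concretely, I would introduce two further parameters $N\in\mb{N}$ and $\varepsilon>0$, and construct approximate triples $(\mc{S}^{\delta,\varepsilon,N},\rho^{\delta,\varepsilon,N},u^{\delta,\varepsilon,N})$ by coupling a Galerkin projection of \eqref{approx3} to the parabolic continuity equation
\begin{equation*}
\partial_t \rho - \varepsilon\Delta\rho + \operatorname{div}(\rho u) = 0, \qquad \nabla\rho\cdot\nu=0 \text{ on }\partial\Omega,
\end{equation*}
and to the rigid transport equations \eqref{approx4}--\eqref{approx5} driven by $P_{\mc{S}}^{\delta} u$. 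The limits $N\to\infty$ and $\varepsilon\to 0$ are then carried out in turn, following the compressible Navier--Stokes scheme of Lions and Feireisl, suitably modified to accommodate the rigid-body penalization and friction terms.

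At the Galerkin level I would fix a smooth basis of $\{\phi\in H^1(\Omega):\phi\cdot\nu=0\text{ on }\partial\Omega\}$. For each $u\in C([0,T];X_N)$ the parabolic continuity equation has a unique nonnegative classical solution, and the transport equations admit a unique solution by the method of characteristics because $P_{\mc{S}}^{\delta} u$ is affine in $x$; this produces a continuous solution map to which Schauder's theorem applies on a short interval. Testing the Galerkin momentum equation with $u_N$ and using the self-adjointness of $P_{\mc{S}}^{\delta}$ in the weighted scalar product $\int\rho^{\delta}\chi_{\mc{S}}^{\delta}\phi\cdot\psi$ yields the energy inequality \eqref{10:45}, uniform in $N$ and $\varepsilon$; this estimate prolongs the solution to any $[0,T]$ on which $\operatorname{dist}(\mc{S}^{\delta}(t),\partial\Omega)\geq 2\sigma$, and the latter geometric bound is precisely what selects the final $T$, since the energy estimate furnishes a uniform $L^{\infty}$-bound on $|P_{\mc{S}}^{\delta} u^{\delta}|$ and hence on the rigid-body speed. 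Passing $N\to\infty$ is then routine Aubin--Lions compactness, the parabolic regularity $\rho\in L^2(0,T;H^1(\Omega))$ being enough to pass to the limit in $p^{\delta}(\rho)$ and in $\rho u\otimes u$.

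The delicate step is $\varepsilon\to 0$, where the parabolic regularity of $\rho$ is lost and strong convergence of $\rho^{\delta,\varepsilon}$ must be established in order to identify the limit pressure. I would first derive the improved density estimate \eqref{rho:improved} by testing the momentum equation with a Bogovski\u{\i}-type corrector of $\rho^\theta$, $\theta=2\gamma/3-1$, and then prove the effective viscous flux identity
\begin{equation*}
\lim_{\varepsilon\to 0}\int_0^T\!\!\int_\Omega \bigl(p^{\delta}(\rho^\varepsilon)-(2\mu^{\delta}+\lambda^{\delta})\operatorname{div} u^\varepsilon\bigr)\rho^\varepsilon\psi = \int_0^T\!\!\int_\Omega \bigl(\overline{p^{\delta}(\rho)}-(2\mu^{\delta}+\lambda^{\delta})\overline{\operatorname{div} u}\bigr)\rho\,\psi,
\end{equation*}
for $\psi\in\mc{D}((0,T)\times\Omega)$, by testing \eqref{approx3} with $\psi\nabla\Delta^{-1}[\rho^\varepsilon]$ and commuting as in Feireisl. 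A new difficulty compared to the pure fluid case is that $\mu^{\delta}$ and $\lambda^{\delta}$ depend on $\chi_{\mc{S}}^{\delta}$; however, \eqref{approx4} together with the Lipschitz regularity of the rigid field $P_{\mc{S}}^{\delta} u^\varepsilon$ yields strong convergence of $\chi_{\mc{S}}^{\delta,\varepsilon}$ in $L^p(\Omega\times(0,T))$ for every finite $p$, so the viscosity factor $2\mu^{\delta}+\lambda^{\delta}$ converges strongly and the flux identity survives the limit. Combined with the DiPerna--Lions renormalization \eqref{rho:renorm1}, available because $u^{\delta,\varepsilon}\in L^2(0,T;H^1(\Omega))$, this yields strong convergence of $\rho^{\delta,\varepsilon}$ via the standard $\rho\log\rho$ monotonicity argument.

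The principal obstacle throughout is the interaction between the compressible machinery and the rigid-body penalization $\tfrac{1}{\delta}\chi_{\mc{S}}^{\delta}(u-P_{\mc{S}}^{\delta} u)$: passing to the limit in this term needs strong convergence of both $\chi_{\mc{S}}^{\delta,\varepsilon}$ and $P_{\mc{S}}^{\delta} u^{\delta,\varepsilon}$. Equation \eqref{approx5} is designed precisely so that the product $\rho^{\delta}\chi_{\mc{S}}^{\delta}$ is transported by the rigid field itself, which makes the moments $m^{\delta}$ and $J^{\delta}$ essentially constants of motion and provides the required compactness of $P_{\mc{S}}^{\delta} u^{\delta,\varepsilon}$. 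The same strong convergence of $\chi_{\mc{S}}^{\delta,\varepsilon}$ makes it possible to pass to the limit in the Navier friction term on the moving interface $\partial\mc{S}^{\delta}(t)$ using the uniform $H^1$-bound on $u^{\delta,\varepsilon}$ inherited from the energy inequality. Once these convergences are in place, \eqref{approx1}--\eqref{approx:initial} and the energy inequality \eqref{10:45} follow from lower semicontinuity, yielding the proposition.
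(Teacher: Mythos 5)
Your proposal follows essentially the same route as the paper: the same nested Faedo--Galerkin/artificial-viscosity/artificial-pressure scheme with $\delta$ fixed, Schauder's fixed point at the Galerkin level, DiPerna--Lions theory for the rigid transport equations \eqref{approx4}--\eqref{approx5} to get compactness of $\chi^{\delta}_{\mc{S}}$ and $P^{\delta}_{\mc{S}}u$, Bogovski\u{\i} estimates for the improved density integrability, and the effective viscous flux identity combined with renormalization to obtain strong convergence of the density as $\varepsilon\to 0$. The key difficulties you single out (the $\chi^{\delta}_{\mc{S}}$-dependent viscosities in the flux identity, the penalization term, and the geometric bound fixing $T$) are exactly the ones the paper addresses, and in the same way.
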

In order to prove \cref{thm:approxn-delta}, we consider a problem with another level of approximation: the $\varepsilon$-level approximation is obtained  via the continuity equation with dissipation accompanied by the artificial pressure in the momentum equation. 
We want to find a triplet $(S^{\varepsilon}, \rho^{\varepsilon}, u^{\varepsilon})$ such that we can obtain a weak solution $(\mc{S}^{\delta},\rho^{\delta},u^{\delta})$ of the system \eqref{approx1}--\eqref{approx:initial} as a weak limit of the sequence $(\mc{S}^{\varepsilon},\rho^{\varepsilon},u^{\varepsilon})$ as $\varepsilon \rightarrow 0$. For $\varepsilon >0$, the triplet is supposed to satisfy:
\begin{itemize}
\item  $\mc{S}^{\varepsilon}(t) \Subset \Omega$ is a bounded, regular domain for all $t \in [0,T]$  with 
\begin{equation}\label{varepsilon:approx1}
\chi^{\varepsilon}_{\mc{S}}(t,x)= \mathds{1}_{\mc{S}^{\varepsilon}(t)}(x) \in L^{\infty}((0,T)\times \Omega) \cap C([0,T];L^p(\Omega)),  \, \forall \,  1 \leq p < \infty.
\end{equation}
\item The velocity field $u^{\varepsilon} \in L^2(0,T; H^1(\Omega))$ and the density function $\rho^{\varepsilon} \in L^{\infty}(0,T; L^{\beta}(\Omega)) \cap L^2(0,T;H^1(\Omega))$, $\rho^{\varepsilon}\geq 0$ satisfy
\begin{equation}\label{varepsilon:approx2} 
\frac{\partial {\rho}^{\varepsilon}}{\partial t} + \operatorname{div}({\rho}^{\varepsilon} u^{\varepsilon}) =\varepsilon \Delta\rho^{\varepsilon} \mbox{ in }\, (0,T)\times \Omega, \quad  \frac{\partial \rho^{\varepsilon}}{\partial \nu}=0 \mbox{ on }\, \partial\Omega.
\end{equation} 
\item For all $\phi \in H^1(0,T; L^{2}(\Omega)) \cap L^{\beta+1}(0,T; W^{1,{\beta+1}}(\Omega))$ with $\phi\cdot \nu=0$ on $\partial\Omega$,  $\phi|_{t=T}=0$, where $\beta \geq \max\{8,\gamma\}$, the following holds:
\begin{multline}\label{varepsilon:approx3}
- \int\limits_0^T\int\limits_{\Omega} \rho^{\varepsilon} \left(u^{\varepsilon}\cdot \frac{\partial}{\partial t}\phi +  u^{\varepsilon} \otimes u^{\varepsilon} : \nabla \phi\right) + \int\limits_0^T\int\limits_{\Omega} \Big(2\mu^{{\varepsilon}}\mathbb{D}(u^{\varepsilon}):\mathbb{D}(\phi) + \lambda^{{\varepsilon}}\operatorname{div}u^{\varepsilon}\mathbb{I} : \mathbb{D}(\phi) - p^{\varepsilon}(\rho^{\varepsilon})\mathbb{I} : \mathbb{D}(\phi)\Big) \\
+\int\limits_0^T\int\limits_{\Omega} \varepsilon \nabla u^{\varepsilon} \nabla \rho^{\varepsilon} \cdot \phi
 + \alpha \int\limits_0^T\int\limits_{\partial \Omega} (u^{\varepsilon} \times \nu)\cdot (\phi \times \nu) + \alpha \int\limits_0^T\int\limits_{\partial \mc{S}^{\varepsilon}(t)} [(u^{\varepsilon}-P^{\varepsilon}_{\mc{S}}u^{\varepsilon})\times \nu]\cdot [(\phi-P^{\varepsilon}_{\mc{S}}\phi)\times \nu] \\
  + \frac{1}{\delta}\int\limits_0^T\int\limits_{\Omega} \chi^{\varepsilon}_{\mc{S}}(u^{\varepsilon}-P^{\varepsilon}_{\mc{S}}u^{\varepsilon})\cdot (\phi-P^{\varepsilon}_{\mc{S}}\phi) = \int\limits_0^T\int\limits_{\Omega}\rho^{\varepsilon} g^{\varepsilon} \cdot \phi
 + \int\limits_{\Omega} (\rho^{\varepsilon} u^{\varepsilon} \cdot \phi)(0).
\end{multline}
\item ${\chi}^{\varepsilon}_{\mc{S}}(t,x)$ satisfies (in the weak sense)
\begin{equation}\label{varepsilon:approx4}
\frac{\partial {\chi}^{\varepsilon}_{\mc{S}}}{\partial t} + P^{\varepsilon}_{\mc{S}}u^{\varepsilon} \cdot \nabla \chi^{\varepsilon}_{\mc{S}} =0\mbox{ in }(0,T)\times {\Omega},\quad  \chi^{\varepsilon}_{\mc{S}}|_{t=0}= \mathds{1}_{\mc{S}_0}\mbox{ in } {\Omega}.
\end{equation}
\item $\rho^{\varepsilon}{\chi}^{\varepsilon}_{\mc{S}}(t,x)$ satisfies (in the weak sense)
\begin{equation}\label{varepsilon:approx5}
\frac{\partial }{\partial t}(\rho^{\varepsilon}{\chi}^{\varepsilon}_{\mc{S}}) + P^{\varepsilon}_{\mc{S}}u^{\varepsilon} \cdot \nabla (\rho^{\varepsilon}{\chi}^{\varepsilon}_{\mc{S}})=0\mbox{ in }(0,T)\times {\Omega},\quad  (\rho^{\varepsilon}{\chi}^{\varepsilon}_{\mc{S}})|_{t=0}= \rho^{\varepsilon}_0\mathds{1}_{\mc{S}_0}\mbox{ in }{\Omega}.
\end{equation}
\item The initial data are given by
\begin{equation}\label{varepsilon:initial}
\rho^{\varepsilon}(0,x)=\rho_0^{\varepsilon}(x), \quad \rho^{\varepsilon} u^{\varepsilon}(0,x) = q_0^{\varepsilon}(x)\quad\mbox{in }\Omega,\quad   \frac {\partial \rho_0^{\varepsilon }}{\partial \nu}\big |_{\partial \Omega} =0.
\end{equation}
\end{itemize}
Above we have used the following quantities:
\begin{itemize}
\item The density of the volume force is defined as
\begin{equation}\label{gepsilon}
g^{\varepsilon}=(1-\chi^{\varepsilon}_{\mc{S}})g_{\mc{F}} + \chi^{\varepsilon}_{\mc{S}}g_{\mc{S}}.
\end{equation}
\item The artificial pressure is given by 
\begin{equation}\label{p1}
p^{\varepsilon}(\rho)= a^{\varepsilon}\rho^{\gamma} + {\delta} \rho^{\beta},\quad\mbox{ with }\quad a^{\varepsilon} = a_{\mc{F}} (1-\chi^{\varepsilon}_{\mc{S}}),
\end{equation}
where $a_{\mc{F}},{\delta} >0$, and the exponents $\gamma$ and $\beta$ satisfy $\gamma > 3/2, \ \beta \geq \max\{8,\gamma\}$.
\item The viscosity coefficients are given by
\begin{equation}\label{vis1}
 \mu^{\varepsilon} = (1-\chi^{\varepsilon}_{\mc{S}})\mu_{\mc{F}} + {\delta}^2\chi^{\varepsilon}_{\mc{S}},\quad  \lambda^{\varepsilon} = (1-\chi^{\varepsilon}_{\mc{S}})\lambda_{\mc{F}} + {\delta}^2\chi^{\varepsilon}_{\mc{S}}\quad\mbox{ so that }\quad\mu^{\varepsilon} >0,\ 2\mu^{\varepsilon}+3\lambda^{\varepsilon} \geq 0.
\end{equation}
\item $P^{\varepsilon}_{\mc{S}}:L^2(\Omega)\rightarrow L^2(\mc{S}^{\varepsilon}(t))$ is the orthogonal projection to rigid fields; it is defined as in \eqref{approx:projection} with $\chi^{\delta}_{\mc{S}}$ is replaced by $\chi^{\varepsilon}_{\mc{S}}$.
\end{itemize} 
\begin{remark}
Above, the triplet $\left(\mc{S}^{\varepsilon}, \rho^{\varepsilon}, u^{\varepsilon}\right)$ should actually be denoted by $\left(\mc{S}^{\delta,\varepsilon}, \rho^{\delta,\varepsilon}, u^{\delta,\varepsilon}\right)$. The dependence on $\delta$ is due to the penalization term $\Big(\tfrac{1}{\delta}\int\limits_0^T\int\limits_{\Omega} \chi^{\varepsilon}_{\mc{S}}(u^{\varepsilon}-P^{\varepsilon}_{\mc{S}}u^{\varepsilon})\cdot (\phi-P^{\varepsilon}_{\mc{S}}\phi)\Big)$ in \eqref{varepsilon:approx3} and in the viscosity coefficients $\mu^{\varepsilon}$, $\lambda^{\varepsilon}$  in \eqref{vis1}. To simplify the notation, we omit $\delta$ here. 
\end{remark}
  In \cref{14:14} we will prove the following existence result of the approximate system \eqref{varepsilon:approx1}--\eqref{varepsilon:initial}:
\begin{proposition}\label{thm:approxn}
Let $\Omega$ and $\mc{S}_0 \Subset \Omega$ be two regular bounded domains of $\mathbb{R}^3$. Assume that for some $\sigma >0$,
  \begin{equation*}
  \operatorname{dist}(\mc{S}_0,\partial\Omega) > 2\sigma.
  \end{equation*}
  Let $g^{\varepsilon}=(1-\chi^{\varepsilon}_{\mc{S}})g_{\mc{F}} + \chi^{\varepsilon}_{\mc{S}}g_{\mc{S}} \in L^{\infty}((0,T)\times \Omega)$ and $\beta$, $\delta$, $\gamma$ be given as in \eqref{cc:dbg}. Further, let the pressure $p^{\varepsilon}$ be determined by \eqref{p1} and the viscosity coefficients $\mu^{\varepsilon}$, $\lambda^{\varepsilon}$ be given by \eqref{vis1}. The initial conditions satisfy, for some $\underline{\rho}$, $\overline{\rho}>0$, 
\begin{equation}\label{inteps}
0<\underline{\rho}\leq \rho_0^{\varepsilon} \leq \overline{\rho}\ \mbox{  in  }\ \Omega, \quad \rho_0^{\varepsilon} \in W^{1,\infty}(\Omega),\quad q_0^{\varepsilon}\in L^2(\Omega).
\end{equation}
 Let the initial energy $$E^{\varepsilon}[\rho _0 ^{\varepsilon},q_0^{\varepsilon}] =\int\limits_{\Omega}\Bigg(\frac{1}{2} \frac{|q_0^{\varepsilon}|^2}{\rho^{\varepsilon}_0}\mathds{1}_{\{\rho^{\varepsilon}_0>0\}} + \frac{a^{\varepsilon}(0)}{\gamma-1}(\rho_0^{\varepsilon})^{\gamma} + \frac{\delta}{\beta-1}(\rho_0^{\varepsilon})^{\beta} \Bigg):= E_0^{\varepsilon}$$ be uniformly bounded with respect to $\delta$ and $ \varepsilon$.
   Then there exists $T > 0$ (depending only on $E^{\varepsilon}_0$, $g_{\mc{F}}$, $g_{\mc{S}}$,  $\operatorname{dist}(\mc{S}_0,\partial\Omega)$) such that system \eqref{varepsilon:approx1}--\eqref{varepsilon:initial} admits a weak solution $(S^{\varepsilon},\rho^{\varepsilon},u^{\varepsilon})$, which satisfies the following energy inequality:
\begin{multline}\label{energy-varepsilon}
E^{\varepsilon}[\rho ^{\varepsilon},q^{\varepsilon}]+ \int\limits_0^T\int\limits_{\Omega} \Big(2\mu^{\varepsilon}|\mathbb{D}(u^{\varepsilon})|^2 + \lambda^{\varepsilon}|\operatorname{div}u^{\varepsilon}|^2\Big) + \delta\varepsilon \beta\int\limits_0^T\int\limits_{\Omega} (\rho^{\varepsilon})^{\beta-2}|\nabla \rho^{\varepsilon}|^2 
 + \alpha \int\limits_0^T\int\limits_{\partial \Omega} |u^{\varepsilon} \times \nu|^2 \\
 + \alpha \int\limits_0^T\int\limits_{\partial \mc{S}^{\varepsilon}(t)} |(u^{\varepsilon}-P^{\varepsilon}_{\mc{S}}u^{\varepsilon})\times \nu|^2 
  + \frac{1}{\delta}\int\limits_0^T\int\limits_{\Omega} \chi^{\varepsilon}_{\mc{S}}|u^{\varepsilon}-P^{\delta}_{\mc{S}}u^{\varepsilon}|^2 \leq \int\limits_0^T\int\limits_{\Omega}\rho^{\varepsilon} 
  g^{\varepsilon} \cdot u^{\varepsilon}
  +   E_0^{\varepsilon}.
\end{multline}
Moreover,
\begin{equation*}
  \operatorname{dist}(\mc{S}^{\varepsilon}(t),\partial\Omega) \geq {2\sigma},\quad \forall \ t\in [0,T],
  \end{equation*}
and the solution satisfies
\begin{equation*}
\partial_t\rho^{\varepsilon},\  \Delta \rho^{\varepsilon}\in {L^{\frac{5\beta-3}{4\beta}}((0,T)\times\Omega)},
\end{equation*}
\begin{equation}\label{est:indofepsilon}
\sqrt{\varepsilon} \|\nabla \rho^{\varepsilon}\|_{L^2((0,T)\times\Omega)} + \|\rho^{\varepsilon}\|_{L^{\beta+1}((0,T)\times\Omega)} + \|(a^{\varepsilon})^{\frac{1}{\gamma+1}}\rho^{\varepsilon}\|_{L^{\gamma+1}((0,T)\times\Omega)} \leq c,
\end{equation}
where $c$ is a positive constant depending on $\delta$ but which is independent of $\varepsilon$.
\end{proposition}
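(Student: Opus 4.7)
The plan is to prove \cref{thm:approxn} as the $N\to\infty$ limit of a Faedo-Galerkin scheme, following the general strategy of Feireisl-Novotn\'y-Petzeltov\'a adapted to accommodate the rigid body, the penalization term and the Navier-slip boundary conditions. First I would fix a complete smooth basis $\{\eta_n\}_{n\geq 1}$ of $L^2(\Omega)$ whose elements satisfy $\eta_n\cdot\nu=0$ on $\partial\Omega$, set $X_N=\operatorname{span}\{\eta_1,\dots,\eta_N\}$, and look for an approximate velocity $u^{\varepsilon,N}(t)\in X_N$ together with $\rho^{\varepsilon,N}$ and $\chi^{\varepsilon,N}_{\mc{S}}$. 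For any given $u\in C([0,\tau];X_N)$, I would first solve the regularized parabolic continuity equation \eqref{varepsilon:approx2} with Neumann condition, using the results from \cref{S5}; since $\rho_0^{\varepsilon}\geq\underline{\rho}>0$ and $u$ is smooth in $x$, a standard maximum principle gives $0<\underline{\rho}\,e^{-\int_0^t\|\operatorname{div}u\|_\infty}\leq\rho^{\varepsilon}[u]\leq\overline{\rho}\,e^{\int_0^t\|\operatorname{div}u\|_\infty}$, together with the $H^1$-in-space/$L^2$-in-time estimates needed later. Simultaneously, I would solve the transport equations \eqref{varepsilon:approx4}--\eqref{varepsilon:approx5} for $\chi^{\varepsilon}_{\mc{S}}[u]$ and $\rho^{\varepsilon}\chi^{\varepsilon}_{\mc{S}}[u]$ driven by the rigid field $P^{\varepsilon}_{\mc{S}}u$, which simply rotates and translates $\mc{S}_0$; this defines $\mc{S}^{\varepsilon}(t)$ and, in turn, all the coefficients $\mu^{\varepsilon},\lambda^{\varepsilon},a^{\varepsilon},g^{\varepsilon}$ and the projector $P^{\varepsilon}_{\mc{S}}$ appearing in \eqref{varepsilon:approx3}.

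Inserting $\rho^{\varepsilon}[u]$ and $\chi^{\varepsilon}_{\mc{S}}[u]$ into the momentum equation \eqref{varepsilon:approx3} tested against elements of $X_N$ yields a system of ODEs for the coefficients of $u^{\varepsilon,N}$. Local-in-time solvability follows from a Banach fixed-point/Schauder argument on a small ball in $C([0,\tau];X_N)$, using continuous dependence of $\rho^{\varepsilon}[u]$ and $\chi^{\varepsilon}_{\mc{S}}[u]$ on $u$; this is precisely where the strictly positive lower bound on $\rho^{\varepsilon}$ enters, since one needs to invert the operator $v\mapsto\sum_n(\rho^{\varepsilon}v,\eta_n)\eta_n$ on $X_N$. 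I would then derive the energy identity at the Galerkin level: testing \eqref{varepsilon:approx3} with $u^{\varepsilon,N}$ and using $b(\rho)=\tfrac{a^{\varepsilon}(0)}{\gamma-1}\rho^\gamma+\tfrac{\delta}{\beta-1}\rho^\beta$ in \eqref{rho:renorm1} gives \eqref{energy-varepsilon} with equality; note that the extra term $\varepsilon\nabla u\nabla\rho$ in \eqref{varepsilon:approx3} is designed exactly to cancel the commutator between the diffusion $\varepsilon\Delta\rho$ in \eqref{varepsilon:approx2} and the kinetic part, yielding the $\delta\varepsilon\beta\int(\rho^\varepsilon)^{\beta-2}|\nabla\rho^\varepsilon|^2$ contribution. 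The energy identity combined with Gronwall allows me to extend the local solution to a uniform time $T$ depending only on the quantities stated; the geometric bound $\operatorname{dist}(\mc{S}^\varepsilon(t),\partial\Omega)\geq 2\sigma$ is obtained by shrinking $T$ if necessary so that the rigid displacement controlled by $\|P^\varepsilon_\mc{S}u^\varepsilon\|_{L^2_tL^\infty_x}$ stays below $\sigma/2$.

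The last step is the passage $N\to\infty$. Here the estimates independent of $N$ give $u^{\varepsilon,N}$ bounded in $L^2(0,T;H^1(\Omega))$ (thanks to Korn's inequality together with the Navier boundary terms in \eqref{energy-varepsilon} and the $\delta^2$ viscosity inside the solid, which ensures a global $H^1$-bound at this level), $\rho^{\varepsilon,N}$ bounded in $L^\infty(0,T;L^\beta)\cap L^2(0,T;H^1)$ with $\sqrt{\varepsilon}\nabla\rho^{\varepsilon,N}$ bounded in $L^2L^2$, and $\partial_t\rho^{\varepsilon,N}$ bounded in a negative Sobolev space, which together with the parabolic regularity of \eqref{varepsilon:approx2} produces the $L^{(5\beta-3)/4\beta}$ bound claimed for $\partial_t\rho^\varepsilon$ and $\Delta\rho^\varepsilon$. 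By Aubin--Lions compactness one gets strong convergence of $\rho^{\varepsilon,N}$ in $L^p$-spaces, which suffices to pass to the limit in every nonlinear term including the pressure $a^{\varepsilon}(\rho^{\varepsilon,N})^{\gamma}+\delta(\rho^{\varepsilon,N})^{\beta}$ and the convective term $\rho^{\varepsilon,N}u^{\varepsilon,N}\otimes u^{\varepsilon,N}$; continuity of the flow for the rigid motion transfers to strong convergence of $\chi^{\varepsilon,N}_{\mc{S}}$ and of $P^{\varepsilon,N}_{\mc{S}}u^{\varepsilon,N}$, which is needed for the penalization and the Navier-slip surface integral. The hardest part will be justifying the limit in the coupled system $(\chi^{\varepsilon}_{\mc{S}},P^{\varepsilon}_{\mc{S}}u^{\varepsilon})$, since $P^{\varepsilon}_{\mc{S}}$ depends on $\rho^{\varepsilon}\chi^{\varepsilon}_{\mc{S}}$ through the formulas for $m^\varepsilon,h^\varepsilon,J^\varepsilon$, and one must ensure that $J^\varepsilon$ stays uniformly invertible along the Galerkin sequence; this is handled by combining the transport structure of \eqref{varepsilon:approx4}--\eqref{varepsilon:approx5} with the strict positivity of $\rho_0^{\varepsilon}$ and the smallness of $T$. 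Passing to the limit in the renormalized continuity equation follows the classical DiPerna--Lions argument, which is directly applicable here because $u^{\varepsilon}\in L^2(0,T;H^1(\Omega))$ and $\rho^{\varepsilon}$ has one extra spatial derivative from the $\varepsilon$-regularization.
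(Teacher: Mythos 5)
Your overall route is the same as the paper's: a Faedo--Galerkin scheme built on a basis of $X_N$ with $\eta_n\cdot\nu=0$, a Schauder fixed point coupling the parabolic continuity equation (with the maximum-principle bounds on $\rho$), the transport of $\chi_{\mc{S}}$ and $\rho\chi_{\mc{S}}$ by $P_{\mc{S}}u$, the energy identity obtained by testing with $u^{\varepsilon,N}$ (with the $\varepsilon\nabla u\nabla\rho$ term cancelling against $\varepsilon\Delta\rho$ and the artificial pressure producing $\delta\varepsilon\beta\int(\rho^{\varepsilon})^{\beta-2}|\nabla\rho^{\varepsilon}|^2$), the smallness of $T$ to keep $\operatorname{dist}(\mc{S}^{\varepsilon}(t),\partial\Omega)\geq 2\sigma$, and the $N\to\infty$ limit via the sequential-continuity results for the rigid motion and the standard Feireisl--Novotn\'y--Petzeltov\'a compactness for the fluid part. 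Two points are worth tightening. First, for the boundary integrals over the moving interface $\partial\mc{S}^N(t)$ (both in the weak formulation and in the lower-semicontinuity step for the energy inequality) you only have weak $L^2(0,T;H^1)$ convergence of $u^N$, so you must transport these integrals to the fixed boundary $\partial\mc{S}_0$ via the isometric propagator and use the extension operators of G\'erard-Varet and Hillairet before passing to the limit; ``continuity of the flow'' alone does not justify convergence of a surface integral on a time-dependent boundary.

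Second, and this is a genuine gap: the conclusion \eqref{est:indofepsilon} asserts $\|\rho^{\varepsilon}\|_{L^{\beta+1}((0,T)\times\Omega)}+\|(a^{\varepsilon})^{1/(\gamma+1)}\rho^{\varepsilon}\|_{L^{\gamma+1}((0,T)\times\Omega)}\leq c$ with $c$ \emph{independent of} $\varepsilon$. This cannot follow from the bounds you list, because the only control on $\nabla\rho^{\varepsilon}$ coming from the energy is $\sqrt{\varepsilon}\|\nabla\rho^{\varepsilon}\|_{L^2}\leq c$, i.e.\ the $L^2(0,T;H^1)$ bound on $\rho^{\varepsilon}$ blows up as $\varepsilon\to 0$ and interpolation with $L^{\infty}(0,T;L^{\beta})$ gives nothing uniform. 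The extra space-time integrability of the density is obtained by testing the momentum equation \eqref{varepsilon:approx3} with $\psi(t)\,\mc{B}[\rho^{\varepsilon}-\overline{m}]$, where $\mc{B}$ is the Bogovskii operator; this produces the term $\int\!\!\int p^{\varepsilon}(\rho^{\varepsilon})\rho^{\varepsilon}$ on the left and requires estimating all remaining terms, in particular the new penalization term $\frac{1}{\delta}\int\!\!\int\chi^{\varepsilon}_{\mc{S}}(u^{\varepsilon}-P^{\varepsilon}_{\mc{S}}u^{\varepsilon})\cdot(\Phi-P^{\varepsilon}_{\mc{S}}\Phi)$, which is controlled by Cauchy--Schwarz together with the energy bound $\frac{1}{\delta}\int\!\!\int\chi^{\varepsilon}_{\mc{S}}|u^{\varepsilon}-P^{\varepsilon}_{\mc{S}}u^{\varepsilon}|^2\leq C$ and $\|\mc{B}[\rho^{\varepsilon}-\overline{m}]\|_{L^2}\leq c\|\rho^{\varepsilon}\|_{L^2}$. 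Without this step the subsequent passage $\varepsilon\to 0$ (identification of the pressure via the effective viscous flux) has no ground to stand on, so it needs to be included explicitly in your proof.
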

To solve the problem \eqref{varepsilon:approx1}--\eqref{varepsilon:initial}, we need yet another level of approximation. The $N$-level approximation is obtained via a Faedo-Galerkin approximation scheme. 

Suppose that $\{e_k\}_{k\geq 1} \subset  \mc{D}(\overline{\Omega})$ with $e_k\cdot\nu=0$ on $\partial\Omega$ is a basis of $ L^2(\Omega)$. We set 
\begin{equation*}
X_N = \mbox{ span}(e_1,\ldots,e_N).
\end{equation*}
$X_N$ is a finite dimensional space with scalar product induced by the scalar product in $L^2(\Omega)$. As $X_N$ is finite dimensional, norms on $X_N$ induced by $W^{k,p}$ norms, $k\in \mathbb{N},\ 1\leq p\leq \infty$ are equivalent. We also assume that
\begin{equation*}
\bigcup_{N}X_N \mbox{ is dense in }\left\{v\in W^{1,p}(\Omega) \mid v\cdot \nu=0\mbox{ on }\partial\Omega\right\},\mbox{ for any }1\leq p < \infty.
\end{equation*}
Such a choice of $X_N$ has been constructed in \cite[Theorem 11.19, page 460]{MR3729430}.

The task is to find a triplet $(S^N, \rho^N, u^N)$ satisfying: 
\begin{itemize}
\item  $\mc{S}^N(t) \Subset \Omega$ is a bounded, regular domain for all $t \in [0,T]$  with 
\begin{equation}\label{galerkin-approx1}
\chi^N_{\mc{S}}(t,x)= \mathds{1}_{\mc{S}^N(t)}(x) \in L^{\infty}((0,T)\times \Omega) \cap C([0,T];L^p(\Omega)), \, \forall \, 1 \leq p < \infty.
\end{equation}
\item The velocity field $u^N (t,\cdot)=\sum\limits_{k=1}^N \alpha_k(t)e_k$ with $(\alpha_1,\alpha_2,\ldots,\alpha_N)\in C([0,T])^N$ and the density function $\rho^{N} \in L^2(0,T;H^{2}(\Omega)) \cap H^{1}(0,T;L^{2}(\Omega))$, $\rho^{N} > 0$ satisfies
\begin{equation}\label{galerkin-approx2} 
\frac{\partial {\rho}^N}{\partial t} + \operatorname{div}({\rho}^N u^N) =\varepsilon \Delta\rho^N \mbox{ in }\, (0,T)\times \Omega, \quad  \frac{\partial \rho^N}{\partial \nu}=0 \mbox{ on }\, \partial\Omega.
\end{equation} 
\item For all $\phi \in \mc{D}([0,T); X_N)$ with $\phi\cdot \nu=0$ on $\partial\Omega$, the following holds:
\begin{multline}\label{galerkin-approx3}
- \int\limits_0^T\int\limits_{\Omega} \rho^N \left(u^N\cdot \frac{\partial}{\partial t}\phi +  u^N \otimes u^N : \nabla \phi\right) + \int\limits_0^T\int\limits_{\Omega} \Big(2\mu^N\mathbb{D}(u^N):\mathbb{D}(\phi) + \lambda^N\operatorname{div}u^N\mathbb{I} : \mathbb{D}(\phi) - p^{N}(\rho^N)\mathbb{I}:\mathbb{D}(\phi)\Big) \\
\int\limits_0^T\int\limits_{\Omega} \varepsilon \nabla u^N \nabla \rho^N \cdot \phi
 + \alpha \int\limits_0^T\int\limits_{\partial \Omega} (u^N \times \nu)\cdot (\phi \times \nu) + \alpha \int\limits_0^T\int\limits_{\partial \mc{S}^N(t)} [(u^N-P^N_{\mc{S}}u^N)\times \nu]\cdot [(\phi-P^N_{\mc{S}}\phi)\times \nu] \\
  + \frac{1}{\delta}\int\limits_0^T\int\limits_{\Omega} \chi^N_{\mc{S}}(u^N-P^N_{\mc{S}}u^N)\cdot (\phi-P^N_{\mc{S}}\phi) = \int\limits_0^T\int\limits_{\Omega} \rho^N g^N \cdot \phi
 + \int\limits_{\Omega} (\rho^N u^N \cdot \phi)(0).
\end{multline}
\item ${\chi}^N_{\mc{S}}(t,x)$ satisfies (in the weak sense)
\begin{equation}\label{galerkin-approx4}
\frac{\partial {\chi}^N_{\mc{S}}}{\partial t} + P^N_{\mc{S}}u^N \cdot \nabla \chi^N_{\mc{S}} =0\mbox{ in }(0,T)\times {\Omega},\quad  \chi^N_{\mc{S}}|_{t=0}= \mathds{1}_{\mc{S}_0}\mbox{ in } {\Omega}.
\end{equation}
\item $\rho^{N}{\chi}^{N}_{\mc{S}}(t,x)$ satisfies (in the weak sense)
\begin{equation}\label{N:approx5}
\frac{\partial }{\partial t}(\rho^{N}{\chi}^{N}_{\mc{S}}) + P^{N}_{\mc{S}}u^{N} \cdot \nabla (\rho^{N}{\chi}^{N}_{\mc{S}})=0\mbox{ in }(0,T)\times {\Omega},\quad  (\rho^{N}{\chi}^{N}_{\mc{S}})|_{t=0}= \rho_{0}^N\mathds{1}_{\mc{S}_0}\mbox{ in } {\Omega}.
\end{equation}
\item The initial data are given by
\begin{equation}\label{galerkin-initial}
\rho^N(0)=\rho_0^N, \quad  u^N(0) = u_0^N \quad\mbox{ in }\Omega,\quad   \frac {\partial \rho_0^{N }}{\partial \nu}\Big |_{\partial \Omega} =0.
\end{equation}
\end{itemize}
Above we have used the following quantities:
\begin{itemize}
\item The density of the volume force is defined as \begin{equation}\label{gN}
g^N=(1-\chi^N_{\mc{S}})g_{\mc{F}} + \chi^N_{\mc{S}}g_{\mc{S}}.
\end{equation}
\item The artificial pressure is given by 
\begin{equation}\label{p2}
p^{N}(\rho)= a^{N}\rho^{\gamma} + {\delta} \rho^{\beta},\quad\mbox{ with }\quad a^{N} = a_{\mc{F}} (1-\chi^{N}_{\mc{S}}),
\end{equation}
where $a_{\mc{F}},{\delta} >0$ and the exponents $\gamma$ and $\beta$ satisfy $\gamma > 3/2, \ \beta \geq \max\{8,\gamma\}$.
\item The viscosity coefficients are given by
\begin{equation}\label{vis2}
 \mu^N = (1-\chi^N_{\mc{S}})\mu_{\mc{F}} + {\delta}^2\chi^N_{\mc{S}},\quad  \lambda^N = (1-\chi^N_{\mc{S}})\lambda_{\mc{F}} + {\delta}^2\chi^N_{\mc{S}}\quad\mbox{ so that }\quad\mu^N >0,\ 2\mu^N+3\lambda^N \geq 0.
 \end{equation}
 \item $P^{N}_{\mc{S}}:L^2(\Omega)\rightarrow L^2(\mc{S}^{N}(t))$ is the orthogonal projection to rigid fields; it is defined as in \eqref{approx:projection} with $\chi^{\delta}_{\mc{S}}$  replaced by $\chi^{N}_{\mc{S}}$.
\end{itemize}
\begin{remark}
Actually the triplet $\left(\mc{S}^N, \rho^N, u^N\right)$ above should be denoted by $\left(\mc{S}^{\delta,\varepsilon,N}, \rho^{\delta,\varepsilon,N}, u^{\delta,\varepsilon,N}\right)$. The dependence on $\delta$ and $\varepsilon$ is due to the penalization term $\left(\tfrac{1}{\delta}\int\limits_0^T\int\limits_{\Omega} \chi^N_{\mc{S}}(u^N-P^N_{\mc{S}}u^N)\cdot (\phi-P^N_{\mc{S}}\phi)\right)$, the viscosity coefficients $\mu^{N}$, $\lambda^{N}$ and the artificial dissipative term $(\varepsilon\Delta\rho)$. To simplify the notation, we omit $\delta$ and $\varepsilon$ here. 
\end{remark}

A weak solution $(S^{\varepsilon},\rho^{\varepsilon},u^{\varepsilon})$ to the system \eqref{varepsilon:approx1}--\eqref{varepsilon:initial} is obtained through the limit of $(S^N, \rho^N, u^N)$ as $N\rightarrow \infty$.
The existence result of the approximate solution of the  Faedo-Galerkin scheme reads:
\begin{proposition}\label{fa}
Let $\Omega$ and $\mc{S}_0 \Subset \Omega$ be two regular bounded domains of $\mathbb{R}^3$.  Assume that for some $\sigma>0$,
  \begin{equation*}
  \operatorname{dist}(\mc{S}_0,\partial\Omega) > 2\sigma.
  \end{equation*}
  Let $g^N=(1-\chi^{N}_{\mc{S}})g_{\mc{F}} + \chi^{N}_{\mc{S}}g_{\mc{S}} \in L^{\infty}((0,T)\times \Omega)$ and $\beta$, $\delta$, $\gamma$ be given by \eqref{cc:dbg}. Further, let the pressure $p^{N}$ be determined by \eqref{p2} and the viscosity coefficients $\mu^N$, $\lambda^N$ be given by \eqref{vis2}.
The initial conditions are assumed to satisfy
\begin{equation}\label{initialcond}
0<\underline{\rho}\leq \rho_0^N \leq \overline{\rho}\ \mbox{  in  }\ \Omega, \quad \rho_0^N \in W^{1,\infty}(\Omega),\quad u_0^N\in X_N. 
\end{equation}
 Let the initial energy $$ 
 E^N(\rho_0^N,q_0^N) =\int\limits_{\Omega} \left( \frac{1}{\rho_0^N}|q_0^N|^2\mathds{1}_{\{\rho_0>0\}}  + \frac{a^N(0)}{\gamma-1}(\rho_0^N)^{\gamma} + \frac{\delta}{\beta-1}(\rho_0^N)^{\beta} \right):= E_0^N$$ 
be uniformly bounded with respect to $N,\varepsilon,\delta$. 
Then there exists $T>0$ (depending only on $E^{N}_0$, $g_{\mc{F}}$, $g_{\mc{S}}$, $\overline{\rho}$, $\underline{\rho}$, $\operatorname{dist}(\mc{S}_0,\partial\Omega)$) such that the problem \eqref{galerkin-approx1}--\eqref{galerkin-initial} admits a solution $(\mc{S}^N,\rho^N,u^N)$ and it satisfies the energy inequality:
\begin{multline*}
E^N[\rho ^N, q^N]  + \int\limits_0^T\int\limits_{\Omega} \Big(2\mu^N|\mathbb{D}(u^N)|^2 + \lambda^N |\operatorname{div}u^N|^2\Big)  + \delta\varepsilon \beta\int\limits_0^T\int\limits_{\Omega} (\rho^N)^{\beta-2}|\nabla \rho^N|^2
 + \alpha \int\limits_0^T\int\limits_{\partial \Omega} |u^N \times \nu|^2 \\
 + \alpha \int\limits_0^T\int\limits_{\partial \mc{S}^N(t)} |(u^N-P^N_{\mc{S}}u^N)\times \nu|^2 
  + \frac{1}{\delta}\int\limits_0^T\int\limits_{\Omega} \chi^N_{\mc{S}}|u^N-P^N_{\mc{S}}u^N|^2 \leq \int\limits_0^T\int\limits_{\Omega}\rho^N g^N \cdot u^N 
 + E^N_0.
 \end{multline*}
 Moreover, 
\begin{equation*}
  \operatorname{dist}(\mc{S}^{N}(t),\partial\Omega) \geq {2\sigma},\quad \forall \ t\in [0,T].
  \end{equation*}
\end{proposition}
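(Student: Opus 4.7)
The proof is by a Schauder fixed-point argument on $C([0,T^*];X_N)$ for a suitably small $T^*>0$. For $v$ in a closed ball $B_K\subset C([0,T^*];X_N)$ with $K$ to be fixed, I define a map $\mathcal{T}\colon v\mapsto u$ by the cascade: (i) solve the linear parabolic continuity equation $\partial_t\rho+\operatorname{div}(\rho v)=\varepsilon\Delta\rho$ with Neumann data to obtain $\rho^N$; (ii) construct the rigid motion $\eta_v(t,\cdot)$ generated by the rigid field $P^N_{\mc{S}} v$ and transport $\chi^N_{\mc{S}}$, $\rho^N\chi^N_{\mc{S}}$ along $\eta_v$; (iii) solve the Galerkin momentum equation \eqref{galerkin-approx3} with coefficients frozen from (i) and (ii). A fixed point of $\mathcal{T}$ yields the desired triplet $(\mc{S}^N,\rho^N,u^N)$.

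For step (i), since $v$ is smooth in space ($X_N\subset\mc{D}(\overline{\Omega})$), standard $L^2$ maximal parabolic regularity gives $\rho^N\in L^2(0,T^*;H^2(\Omega))\cap H^1(0,T^*;L^2(\Omega))$, and a comparison principle yields $\underline{\rho}\,e^{-CKt}\leq\rho^N(t,\cdot)\leq\overline{\rho}\,e^{CKt}$ with $C$ independent of $N$. For step (ii), $P^N_{\mc{S}} v$ depends on $\rho^N\chi^N_{\mc{S}}$, but since rigid transport preserves mass and transforms the center of mass and moment of inertia explicitly through the rigid motion, the coupled system $(\chi^N_{\mc{S}},\rho^N\chi^N_{\mc{S}},\eta_v)$ reduces to an ODE on $\mathbb{R}^3\times SO(3)$ driven by $v$. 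The results of \cref{S5} then furnish $\chi^N_{\mc{S}}\in C([0,T^*];L^p(\Omega))$ for every $p<\infty$, and the uniform bound $\|P^N_{\mc{S}} v\|_{L^\infty}\leq CK$ ensures $\operatorname{dist}(\mc{S}^N(t),\partial\Omega)\geq 2\sigma$ once $T^*\leq c\sigma/K$.

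Step (iii) reduces \eqref{galerkin-approx3} to an ODE system for $\alpha=(\alpha_1,\ldots,\alpha_N)$. Testing with $\phi=e_j$ gives
\[
\frac{d}{dt}\bigl(M^N(t)\alpha(t)\bigr)=F^N(t,\alpha(t)),\qquad \alpha(0)=\alpha_0,
\]
where $M^N_{jk}(t)=\int_\Omega\rho^N\, e_j\cdot e_k\,dx$ is symmetric positive-definite (thanks to $\rho^N>0$ and linear independence of the $e_k$'s) and $F^N$ is continuous in $t$, locally Lipschitz in $\alpha$. Carath\'eodory theory supplies a unique local solution. Continuity of $\mathcal{T}$ follows from continuous dependence of the parabolic and transport problems on $v$, and compactness is automatic in finite dimensions with the ODE providing a uniform Lipschitz bound in $t$; Schauder's theorem then yields a fixed point.

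The stated energy inequality is obtained by testing \eqref{galerkin-approx3} with $\phi=u^N$ itself (admissible in the Galerkin framework) and combining with \eqref{galerkin-approx2} multiplied by $\tfrac12|u^N|^2+G'(\rho^N)$ for $G(\rho)=\tfrac{a^N(0)}{\gamma-1}\rho^\gamma+\tfrac{\delta}{\beta-1}\rho^\beta$, followed by integration by parts: the penalization term and the Navier-slip boundary terms enter with the correct sign, the artificial dissipation produces the $(\rho^N)^{\beta-2}|\nabla\rho^N|^2$ contribution, and Gr\"onwall's lemma closes the estimate with a constant depending only on $E_0^N$ and the data. The main obstacle I anticipate is making $T^*$ and $K$ depend only on the quantities listed in the statement (in particular independent of $N$, as needed for the later limits $N\to\infty$, $\varepsilon\to 0$, $\delta\to 0$): this is achieved by closing the fixed-point radius $K$ through the energy bound $\bigl(\int_\Omega\rho^N|u^N|^2\bigr)^{1/2}\lesssim \sqrt{E_0^N}$ rather than through equivalence-of-norms constants on $X_N$, so that the radius entering the Schauder argument is controlled by the data at every iteration.
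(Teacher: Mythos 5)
Your proposal follows essentially the same route as the paper: a Schauder fixed-point argument on a ball of $C([0,T];X_N)$, with the cascade (parabolic continuity equation with Neumann data and comparison-principle bounds on $\rho^N$, transport of $\chi^N_{\mc S}$ and $\rho^N\chi^N_{\mc S}$ along the rigid propagator, then the Galerkin momentum system reduced to an ODE for the coefficients with the mass matrix invertible by strict positivity of $\rho^N$), continuity of the map from sequential stability of the transport/parabolic problems, compactness from the finite-dimensional $C^1$ bound, and the energy inequality obtained by testing with $u^N$ and using the continuity equation; the choice of $T$ through the energy bound and the distance condition $\operatorname{dist}(\mc S^N(t),\partial\Omega)\geq 2\sigma$ also matches the paper's Step~3. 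This is the paper's proof in all essentials.
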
  

We prove the above proposition in \cref{sec:Galerkin}.

  \section{Isometric propagators and the motion of the body} \label{S5}
In this section, we state and prove some results regarding the transport equation that we use in our analysis. We mainly concentrate on the following equation:
\begin{equation}\label{transport1}
\frac{\partial {\chi}_{\mc{S}}}{\partial t} + \operatorname{div}(P_{\mc{S}}u\chi_{\mc{S}}) =0 \mbox{  in  }(0,T)\times\mathbb{R}^3,\quad {\chi}_{\mc{S}}|_{t=0}=\mathds{1}_{\mc{S}_0}\mbox{  in  }\mathbb{R}^3,
\end{equation}
where $P_{\mc{S}}u \in \mc{R}$. Note that here $\mc{R}$ is referring to the set of rigid fields on $\mathbb{R}^3$ in the spirit of \eqref{defR}. It is given by
\begin{equation}\label{projection:P}
P_{\mc{S}}u (t,x)= \frac{1}{m} \int\limits_{\Omega} \rho\chi_{\mc{S}} u + \left(J^{-1} \int\limits_{\Omega}\rho\chi_{\mc{S}}((y-h(t)) \times u)\ dy \right)\times (x-h(t)),\quad \forall \ (t,x)\in (0,T)\times\mathbb{R}^3.
\end{equation}
   In \cite[Proposition 3.1]{MR3272367}, the existence of a solution to \eqref{transport1} and the characterization of the transport of the rigid body have been established with constant $\rho$ in the expression \eqref{projection:P} of $P_{\mc{S}}u$. Here we deal with the case when $\rho$ is evolving.  We start with some existence results when the velocity field and the density satisfy certain regularity assumptions. 
\begin{proposition}\label{reg:chiS}
Let $u \in C([0,T];\mc{D}(\overline{\Omega}))$ and $\rho \in L^2(0,T;H^2(\Omega)) \cap C([0,T];H^1(\Omega))$. Then the following holds true:
\begin{enumerate}
\item There is a unique solution $\chi_{\mc{S}} \in L^{\infty}((0,T)\times \mathbb{R}^3) \cap C([0,T];L^p(\mathbb{R}^3))$  $\forall \ 1\leq p<\infty$ to \eqref{transport1}. More precisely, $$\chi_{\mc{S}}(t,x)= \mathds{1}_{\mc{S}(t)}(x),\quad\forall \ t\geq 0,\ \forall \ x\in \mathbb{R}^3.$$ Moreover, $\mc{S}(t)=\eta_{t,0}(\mc{S}_0)$ for the isometric propagator $\eta_{t,s}$ associated to $P_{\mc{S}}u$:
\begin{equation*}
(t,s)\mapsto \eta_{t,s} \in C^1([0,T]^2; C^{\infty}_{loc}(\mb{R}^3)),
\end{equation*}
where 
 $\eta_{t,s}$ is defined by
\begin{equation}\label{ODE-propagator}
\frac{\partial \eta_{t,s}}{\partial t}(y)=P_{\mc{S}}u (t,\eta_{t,s}(y)),\quad \forall\ (t,s,y)\in (0,T)^2 \times \mb{R}^3, \quad \eta_{s,s}(y)=y,\quad \forall\ y\in  \mb{R}^3.
\end{equation}
\item
 Let $\rho_{0}\mathds{1}_{\mc{S}_0} \in L^{\infty}(\mathbb{R}^3)$. Then there is a unique solution $\rho\chi_{\mc{S}} \in L^{\infty}((0,T)\times \mathbb{R}^3) \cap C([0,T];L^p(\mathbb{R}^3))$,  $\forall \ 1\leq p<\infty$ to the following equation:
\begin{equation}\label{re:transport1}
\frac{\partial (\rho {\chi}_{\mc{S}})}{\partial t} + \operatorname{div}((\rho\chi_{\mc{S}})P_{\mc{S}}u) =0 \mbox{  in  }(0,T)\times\mathbb{R}^3,\quad \rho{\chi}_{\mc{S}}|_{t=0}=\rho_{0}\mathds{1}_{\mc{S}_0}\mbox{  in  }\mathbb{R}^3.
\end{equation}
\end{enumerate}
\end{proposition}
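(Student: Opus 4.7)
The key structural fact is that $P_{\mc{S}}u(t,\cdot)$ is always a rigid velocity field, so its flow $\eta_{t,s}$ consists of affine isometries of $\mb{R}^3$. Consequently any solution of \eqref{transport1} starting from an indicator must remain an indicator of a rigidly displaced copy $\mc{S}(t)=\eta_{t,0}(\mc{S}_0)$, and the problem reduces to finding the correct $\mathrm{SE}(3)$-valued path. I would therefore set this up as a fixed point. For a candidate continuous path $\psi:[0,T_*]\to\mathrm{SE}(3)$ with $\psi_0=\mathrm{id}$, define $\mc{S}^{\psi}(t):=\psi_t(\mc{S}_0)$, $\chi^{\psi}:=\mathds{1}_{\mc{S}^{\psi}}$, compute the coefficients $m^{\psi},\,h^{\psi}(t),\,J^{\psi}(t)$ by the $\rho\chi^{\psi}$-weighted integrals analogous to \eqref{def:m}--\eqref{def:J}, and substitute into \eqref{projection:P} to obtain a rigid field $P^{\psi}_{\mc{S}}u$. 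Solving the ODE \eqref{ODE-propagator} driven by $P^{\psi}_{\mc{S}}u$ yields a new path $\Phi(\psi)$, and a fixed point of $\Phi$ gives the desired $\chi_{\mc{S}}$.

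The main step is to verify that $\Phi$ is a contraction on a small interval $[0,T_*]$. For two candidates $\psi,\widetilde\psi$, the symmetric difference $\mc{S}^{\psi}(t)\bigtriangleup\mc{S}^{\widetilde\psi}(t)$ has Lebesgue measure controlled by $\|\psi-\widetilde\psi\|_{C([0,T_*];\mathrm{SE}(3))}$. Combined with $\rho\in C([0,T];H^1(\Omega))\hookrightarrow C([0,T];L^q(\Omega))$ for $q<6$ and $u\in C([0,T];\mc{D}(\overline\Omega))$, this yields Lipschitz dependence of the triple $(m^{\psi},h^{\psi},J^{\psi})$, and hence of $P^{\psi}_{\mc{S}}u$, on $\psi$. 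A uniform lower bound on $J^{\psi}$ is needed to invert it: it holds on a short time interval because $J^{\mathrm{id}}$ is positive definite whenever $\int_{\mc{S}_0}\rho(0,\cdot)\,dx>0$, and $\psi\mapsto J^{\psi}$ is continuous. A Gronwall estimate then gives $\|\Phi(\psi)-\Phi(\widetilde\psi)\|\leq CT_*\|\psi-\widetilde\psi\|$, so Banach's theorem produces a unique fixed point on $[0,T_*]$; standard continuation extends it to $[0,T]$ as long as $\mc{S}(t)\Subset\Omega$. The regularity $\eta\in C^1([0,T]^2;C^{\infty}_{\mathrm{loc}}(\mb{R}^3))$ is automatic because the drift is affine in $x$ and $C^1$ in $t$, and $\chi_{\mc{S}}\in L^{\infty}\cap C([0,T];L^p)$ follows from continuous dependence of the set $\eta_{t,0}(\mc{S}_0)$ on $t$.

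For part (2), note that every rigid field is divergence-free, so \eqref{re:transport1} is a linear transport equation with smooth divergence-free drift. The method of characteristics gives the explicit formula
\[
(\rho\chi_{\mc{S}})(t,x)=(\rho_{0}\mathds{1}_{\mc{S}_0})(\eta_{0,t}(x)),
\]
which lies in $L^{\infty}((0,T)\times\mb{R}^3)$ because $\eta_{0,t}$ is an isometry and $\rho_{0}\mathds{1}_{\mc{S}_0}\in L^{\infty}$, and in $C([0,T];L^p(\mb{R}^3))$ for every $p<\infty$ by continuity of the flow; uniqueness is standard for Lipschitz drifts. The principal obstacle throughout is the Lipschitz dependence of $P^{\psi}_{\mc{S}}u$ on $\psi$: unlike the constant-density case in \cite{MR3272367}, the weights $m^{\psi},h^{\psi},J^{\psi}$ depend on the time-varying density $\rho$, and controlling their variation under small rigid motions of $\mc{S}_0$ requires Sobolev embedding applied on the thin layer $\mc{S}^{\psi}\bigtriangleup\mc{S}^{\widetilde\psi}$, which forces $T_*$ to depend quantitatively on a uniform lower bound for $\det J^{\psi}$ and on $\|\rho\|_{C([0,T];H^1)}$.
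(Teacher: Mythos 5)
Your proposal is correct and follows essentially the same route as the paper: the paper also reduces \eqref{transport1} to an ODE on the group of isometries driven by the rigid field obtained by pulling the $\rho$-weighted integrals back to $\mc{S}_0$, and invokes Cauchy--Lipschitz after proving exactly the key estimate you identify, namely Lipschitz dependence of the projected field on the isometry (decomposed into a symmetric-difference term and a $\rho(t,\eta_1)-\rho(t,\eta_2)$ term controlled by $\|\rho\|_{C([0,T];H^1)}$). Your Banach fixed point on $\mathrm{SE}(3)$-valued paths is just Picard iteration for that same ODE, and your characteristics formula for part (2) is the argument the paper leaves implicit.
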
 
\begin{proof}
Following \cite[Proposition 3.1]{MR3272367}, we observe that proving existence of solution to \eqref{transport1} is equivalent to establishing the well-posedness of the ordinary differential equation
\begin{equation}\label{ODE-cauchy}
\frac{d}{dt}\eta_{t,0}=U_{\mc{S}}(t,\eta_{t,0}),\quad \eta_{0,0}=\mathbb{I},
\end{equation}
where $U_{\mc{S}}\in \mc{R}$ is given by
\begin{equation*}
U_{\mc{S}}(t,\eta_{t,0})= \frac{1}{m} \int\limits_{\mc{S}_0} \rho(t,\eta_{t,0}(y))\mathds{1}_{\Omega} u(t,\eta_{t,0}(y)) + \left(J^{-1} \int\limits_{\mc{S}_0}\rho(t,\eta_{t,0}(y))\mathds{1}_{\Omega}((\eta_{t,0}(y)-h(t)) \times u(t,\eta_{t,0}(y)))\ dy \right)\times (x-h(t)).
\end{equation*}
  According to the Cauchy-Lipschitz theorem, equation \eqref{ODE-cauchy} admits the unique $C^1$ solution if  $U_{\mc{S}}$ is continuous in $(t,\eta)$ and uniformly Lipschitz in $\eta$. Thus, it is enough to establish the following result analogous to \cite[Lemma 3.2]{MR3272367}: Let $u \in C([0,T];\mc{D}(\overline{\Omega}))$ and $\rho \in L^2(0,T;H^2(\Omega)) \cap C([0,T];H^1(\Omega))$. Then the function 
  \begin{equation*}
  \mc{M}:[0,T]\times \operatorname{Isom}(\mathbb{R}^3)\mapsto \mathbb{R},\quad \mc{M}(t,\eta)=\int\limits_{\mc{S}_0} \rho(t,\eta(y))\mathds{1}_{\Omega}(\eta(y)) u(t,\eta(y))
  \end{equation*}
  is continuous in $(t,\eta)$ and uniformly Lipschitz in $\eta$ over $[0,T]$.
  Observe that the continuity in the $t$-variable is obvious. Moreover, for two isometries $\eta_1$ and $\eta_2$, we have
  \begin{align*}
  \mc{M}(t,\eta_1)-&\mc{M}(t,\eta_2)\\=& \int\limits_{\mc{S}_0} \rho(t,\eta_1(y))\mathds{1}_{\Omega}(\eta_1(y)) (u(t,\eta_1(y))-u(t,\eta_2(y))) + \int\limits_{\mc{S}_0} \rho(t,\eta_1(y))(\mathds{1}_{\Omega}(\eta_1(y))-\mathds{1}_{\Omega}(\eta_2(y))) u(t,\eta_2(y)) \\&+ \int\limits_{\mc{S}_0} (\rho(t,\eta_1(y))-\rho(t,\eta_2(y)))\mathds{1}_{\Omega}(\eta_2(y)) u(t,\eta_2(y)):=M_1 +M_2 + M_3.
  \end{align*}
  As $\rho \in L^2(0,T;H^2(\Omega)) \cap C([0,T];H^1(\Omega))$, the estimates of the terms $M_1$ and $M_2$ are similar to \cite[Lemma 3.2]{MR3272367}. The term $M_3$ can be estimated in the following way:
  \begin{equation*}
  |M_3|\leq C\|\rho\|_{L^{\infty}(0,T;H^1(\Omega))}\|u\|_{L^{\infty}(0,T;L^2(\Omega))}\|\eta_1 -\eta_2\|_{\infty}.
  \end{equation*}
  This finishes the proof of the first part of \cref{reg:chiS}. The second part of this Proposition is similar and we skip it here.
\end{proof}
Next we prove the analogous result of \cite[Proposition 3.3, Proposition 3.4]{MR3272367} on strong and weak sequential continuity which are essential to establish the existence result of the Galerkin approximation scheme in \cref{sec:Galerkin}. The result obtained in the next proposition is used to establish the continuity of the fixed point map in the proof of the existence of Galerkin approximation. 
\begin{proposition}\label{sequential1}
Let $\rho^N_0 \in W^{1,\infty}(\Omega)$, let $\rho^k \in L^2(0,T;H^{2}(\Omega)) \cap C([0,T]; H^{1}(\Omega)) \cap H^{1}(0,T;L^{2}(\Omega))$ be the solution to 
\begin{equation}\label{eq:rhoN} 
\frac{\partial {\rho^k}}{\partial t} + \operatorname{div}({\rho}^k u^k) = \Delta\rho^k \mbox{ in }\, (0,T)\times \Omega, \quad  \frac{\partial \rho^k}{\partial \nu}=0 \mbox{ on }\, \partial\Omega, \quad\rho^k(0,x)=\rho_0^N(x)\quad\mbox{in }\Omega,\quad\frac {\partial \rho_0^{k}}{\partial \nu}\big |_{\partial \Omega} =0.
\end{equation} 
\begin{equation*}
u^k \rightarrow u \mbox{ strongly in }C([0,T];\mc{D}(\overline{\Omega})),\quad \chi_{\mc{S}}^k \mbox{ is bounded in }L^{\infty}((0,T)\times \mb{R}^3)\mbox{ satisfying }
\end{equation*}
\begin{equation}\label{n:transport}
\frac{\partial {\chi}^k_{\mc{S}}}{\partial t} + \operatorname{div}(P^k_{\mc{S}}u^k\chi^k_{\mc{S}}) =0 \mbox{  in  }(0,T)\times\mathbb{R}^3,\quad {\chi}^k_{\mc{S}}|_{t=0}=\mathds{1}_{\mc{S}_0}\mbox{  in  }\mathbb{R}^3,
\end{equation}
and let $\{\rho^{k}\chi_{\mc{S}}^{k}\}$ be a bounded sequence in $L^{\infty}((0,T)\times\mathbb{R}^3)$ satisfying
\begin{equation}\label{N:rhotrans}
\frac{\partial}{\partial t}(\rho^{k}{\chi}^{k}_{\mc{S}}) + \operatorname{div}(P^{k}_{\mc{S}}u^{k}(\rho^{k}\chi^{k}_{\mc{S}})) =0 \mbox{  in  }(0,T)\times\mathbb{R}^3,\quad \rho^{k}{\chi}^{k}_{\mc{S}}|_{t=0}=\rho^N_0\mathds{1}_{\mc{S}_0}\mbox{  in  }\mathbb{R}^3,
\end{equation}
 where $P^{k}_{\mc{S}}:L^2(\Omega)\rightarrow L^2(\mc{S}^{k}(t))$ is the orthogonal projection to rigid fields with $\mc{S}^{k}(t) \Subset \Omega$ being a bounded, regular domain for all $t \in [0,T]$.
Then 
\begin{align*}
& \chi_{\mc{S}}^k \rightarrow  \chi_{\mc{S}} \mbox{ weakly-}* \mbox{ in }L^{\infty}((0,T)\times \mb{R}^3) \mbox{ and}\mbox{ strongly} \mbox{ in }C([0,T]; L^p_{loc}(\mb{R}^3)), \ \forall \ 1\leq p<\infty,\\
&\rho^k\chi_{\mc{S}}^k \rightarrow \rho \chi_{\mc{S}} \mbox{ weakly-}* \mbox{ in }L^{\infty}((0,T)\times \mb{R}^3) \mbox{ and}\mbox{ strongly} \mbox{ in }C([0,T]; L^p_{loc}(\mb{R}^3)), \ \forall \ 1\leq p<\infty,
\end{align*}
where $\chi_{\mc{S}}$ and $\rho\chi_{\mc{S}}$ are satisfying \eqref{transport1} and \eqref{re:transport1} with initial data $\mathds{1}_{\mc{S}_0}$ and $\rho^{N}_{0}\mathds{1}_{\mc{S}_0}$, respectively. Moreover,
\begin{align*}
& P_{\mc{S}}^k u^k \rightarrow P_{\mc{S}} u \mbox{ strongly} \mbox{ in }C([0,T]; C^{\infty}_{loc}(\mb{R}^3)), \\
&\eta_{t,s}^k \rightarrow \eta_{t,s} \mbox{ strongly} \mbox{ in }C^{1}([0,T]^2; C^{\infty}_{loc}(\mb{R}^3)).
\end{align*}
\end{proposition}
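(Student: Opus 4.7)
My plan is to adapt the argument of Proposition 3.3 in \cite{MR3272367} to the situation with an evolving density $\rho^k$. The three ingredients are: parabolic regularity for $\rho^k$, uniform control of the rigid projections $P^k_{\mc{S}} u^k$, and Cauchy--Lipschitz stability of the isometric propagators $\eta^k_{t,s}$. The overall strategy is to first extract a subsequence along which all quantities converge, identify the limit using the uniqueness in \cref{reg:chiS}, and then conclude by a standard subsequence argument that the whole sequence converges.

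For the first step I would use the linear parabolic equation \eqref{eq:rhoN}. With $u^k$ bounded in $C([0,T]; \mc{D}(\overline{\Omega}))$ and the common initial datum $\rho_0^N \in W^{1,\infty}(\Omega)$, standard energy and $L^p$-maximal regularity estimates (after differentiating in space and exploiting the Neumann condition) give a uniform bound on $\rho^k$ in $L^2(0,T; H^2(\Omega)) \cap H^1(0,T; L^2(\Omega))$, and the maximum principle gives pointwise bounds $0 < \underline{\rho} \leq \rho^k \leq \overline{\rho}$. Aubin--Lions then yields strong compactness in $C([0,T]; H^1(\Omega))$, so up to a subsequence $\rho^k \to \rho$ in this space, with $\rho$ solving the limit parabolic equation driven by $u$.

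Next I would show that the projections $P^k_{\mc{S}}u^k$ are uniformly controlled. The key point is that $\eqref{N:rhotrans}$ is conservative with a rigid transport velocity: testing against $1$ gives $m^k(t) = \int_{\mathbb{R}^3} \rho^k \chi^k_{\mc{S}} \, dx = \int_{\mc{S}_0} \rho_0^N \, dx$, which is independent of $k$ and strictly positive. Since $\chi^k_{\mc{S}}$ is transported by a rigid field, $\mc{S}^k(t)$ preserves the measure of $\mc{S}_0$ and stays inside a fixed compact subset of $\Omega$; combined with the lower bound on $\rho^k$ this forces $J^k(t)$ to remain uniformly positive definite. Therefore the formula \eqref{approx:projection} produces a sequence $P^k_{\mc{S}} u^k$ bounded in $C([0,T]; C^\infty_{loc}(\mathbb{R}^3))$, and equicontinuity in time follows from differentiating the expression and using the transport equations for $\chi^k_{\mc{S}}$ and $\rho^k \chi^k_{\mc{S}}$ together with the bounds on $\partial_t u^k$ (or by time-mollifying $u^k$).

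Once $P^k_{\mc{S}} u^k$ is uniformly bounded and Lipschitz in $x$, the ODE \eqref{ODE-propagator} is stable, so by Arzel\`a--Ascoli and Gronwall $\eta^k_{t,s} \to \eta_{t,s}$ in $C^1([0,T]^2; C^\infty_{loc}(\mathbb{R}^3))$ along a subsequence. Hence $\mc{S}^k(t) = \eta^k_{t,0}(\mc{S}_0)$ converges in Hausdorff distance uniformly in $t$, which gives $\chi^k_{\mc{S}} \to \mathds{1}_{\mc{S}(\cdot)}$ strongly in $C([0,T]; L^p_{loc}(\mathbb{R}^3))$. Combined with the strong convergence of $\rho^k$ from Step 1, we also obtain $\rho^k \chi^k_{\mc{S}} \to \rho \chi_{\mc{S}}$ strongly. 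Passing to the limit in \eqref{n:transport} and \eqref{N:rhotrans} and invoking the uniqueness statement of \cref{reg:chiS} identifies the limit; a subsequence argument then gives convergence of the entire sequence. The main obstacle is precisely the nonlinear coupling in the formula \eqref{approx:projection}: $P^k_{\mc{S}} u^k$ depends on $\rho^k \chi^k_{\mc{S}}$, whose convergence we wish to establish, and conversely $\chi^k_{\mc{S}}$ is transported by $P^k_{\mc{S}} u^k$. The conservation of the total mass $m^k$ is what breaks this circular dependence and allows the bootstrap to close.
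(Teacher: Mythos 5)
Your argument is correct in substance, but it follows a genuinely different route from the paper's. The paper's proof is Eulerian: it only extracts a \emph{weak} $L^2(0,T;\mc{R})$ limit $\overline{u_{\mc{S}}}$ of $P^k_{\mc{S}}u^k$, feeds the two transport equations \eqref{n:transport} and \eqref{N:rhotrans} into the DiPerna--Lions stability theorem \cite[Theorem II.4]{DiPerna1989} to obtain the strong $C([0,T];L^p_{loc})$ compactness of $\chi^k_{\mc{S}}$ and $\rho^k\chi^k_{\mc{S}}$, gets strong compactness of $\rho^k$ from the parabolic problem as in \cite[Section 7.8.1]{MR2084891}, and only \emph{afterwards} identifies $\overline{u_{\mc{S}}}=P_{\mc{S}}u$ by passing to the limit in the formula \eqref{approx:projection}; the convergence of the propagator comes last, from \eqref{ODE-propagator}. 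You instead run a Lagrangian bootstrap closer to \cite[Proposition 3.3]{MR3272367}: uniform bounds and equicontinuity of $P^k_{\mc{S}}u^k$ in $C([0,T];\mc{R})$ (using conservation of $m^k$ and the congruence of $J^k(t)$ to the fixed $J(0)$ to keep the projection formula nondegenerate), Arzel\`a--Ascoli in the finite-dimensional space $\mc{R}$, Cauchy--Lipschitz/Gronwall stability of the flow, and then convergence of $\mc{S}^k(t)$ in Hausdorff distance, which yields the convergence of the characteristic functions directly. Your approach buys the strong-in-time $C([0,T];\cdot)$ convergences of $P^k_{\mc{S}}u^k$ and $\eta^k_{t,s}$ claimed in the statement without invoking renormalized-solution machinery, and it is well adapted to the Galerkin level where $u^k$ converges strongly; the paper's DiPerna--Lions route is the one that survives in \cref{sequential11}, \cref{sequential-varepsilon} and \cref{sequential2}, where the velocities converge only weakly and no equicontinuity of $P_{\mc{S}}u$ is available. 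Two small repairs: Aubin--Lions from $L^2(0,T;H^2)\cap H^1(0,T;L^2)$ gives compactness of $\rho^k$ in $L^2(0,T;H^1)$ and $C([0,T];L^2)$, not in $C([0,T];H^1)$ — but the weaker convergence is all you use; and no bound on $\partial_t u^k$ is assumed, nor is one needed, since strong convergence of $u^k$ in $C([0,T];\mc{D}(\overline{\Omega}))$ already implies uniform equicontinuity in time, and the time-regularity of $h^k$, $J^k$ and of the moments of $\rho^k\chi^k_{\mc{S}}$ follows from \eqref{N:rhotrans} once $P^k_{\mc{S}}u^k$ is bounded.
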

 \begin{proof}
 As $\{u^{k}\}$ converges strongly in $C([0,T];\mc{D}(\overline{\Omega}))$
  and $\{ \rho^{k} \chi_{\mc{S}}^{k}\} \mbox{ is bounded in }L^{\infty}((0,T)\times \mb{R}^3)$,
 we obtain that $P^{k}_{\mc{S}}u^{k}$ is bounded in $L^2(0,T;\mc{R})$. Thus, up to a subsequence,
 \begin{equation}\label{PN:weak}
 P^{k}_{\mc{S}}u^{k} \rightarrow \overline{u_{\mc{S}}} \mbox{ weakly in }L^2(0,T;\mc{R}).
 \end{equation}
 Here, obviously $P^{k}_{\mc{S}}u^{k} \in L^1(0,T; L^{\infty}_{loc}(\mb{R}^3))$, $\operatorname{div}(P^{k}_{\mc{S}}u^{k})=0$ and $\overline{u_{\mc{S}}} \in L^{1}(0,T;W^{1,1}_{loc}(\mb{R}^3))$ satisfies 
\begin{equation*}
\frac{\overline{u_{\mc{S}}}}{1+|x|} \in L^1(0,T;L^1(\mb{R}^3)).
\end{equation*}
Moreover, $\{\chi_{\mc{S}}^{k}\} \mbox{ is bounded in }L^{\infty}((0,T)\times \mb{R}^3)$, $\chi_{\mc{S}}^{k}$ satisfies \eqref{n:transport} and $\{\rho^{N}\chi_{\mc{S}}^{k}\}$ is bounded in $L^{\infty}((0,T)\times\mathbb{R}^3)$, $\rho^{k}\chi_{\mc{S}}^{k}$ satisfies \eqref {N:rhotrans}. As we have verified all the required conditions, we can apply \cite[Theorem II.4, Page 521]{DiPerna1989} to  obtain 
$$\chi_{\mc{S}}^{k} \mbox{ converges weakly-}*\mbox{ in  }L^{\infty}((0,T)\times \mb{R}^3),\mbox{ strongly in }C([0,T]; L^p_{loc}(\mb{R}^3)) \ (1\leq p<\infty),$$
$$\rho^{k}\chi_{\mc{S}}^{k} \mbox{ converges weakly-}*\mbox{ in  }L^{\infty}((0,T)\times \mb{R}^3),\mbox{ strongly in }C([0,T]; L^p_{loc}(\mb{R}^3)) \ (1\leq p<\infty).$$
Let the limit of $\chi_{\mc{S}}^{k}$ be denoted by ${\chi_{\mc{S}}}$, it satisfies  
\begin{equation*}
\frac{\partial {\chi_{\mc{S}}}}{\partial t} + \operatorname{div}(\overline{u_{\mc{S}}}\ {\chi_{\mc{S}}}) =0 \mbox{  in  }(0,T)\times \mathbb{R}^3,\quad {\chi_{\mc{S}}}|_{t=0}=\mathds{1}_{\mc{S}_0}\mbox{  in  }\mathbb{R}^3.
\end{equation*} 
Let the weak limit of $\rho^{k}\chi_{\mc{S}}^{k}$ be denoted by $\overline{\rho\chi_{\mc{S}}}$; it satisfies  
\begin{equation*}
\frac{\partial (\overline{\rho\chi_{\mc{S}}})}{\partial t} + \operatorname{div}(\overline{u_{\mc{S}}}\ \overline{\rho\chi_{\mc{S}}}) =0 \mbox{  in  }(0,T)\times\mathbb{R}^3,\quad \overline{\rho\chi_{\mc{S}}}|_{t=0}=\rho^N_0\mathds{1}_{\mc{S}_0}\mbox{  in  }\mathbb{R}^3.
\end{equation*}
We follow the similar analysis as for the fluid case explained in \cite[Section 7.8.1, Page 362]{MR2084891} to conclude that
\begin{equation*}
 \rho^k \rightarrow \rho\mbox{ strongly in } L^p((0,T)\times \Omega), \quad\forall \ 1\leq p< \frac{4}{3}\beta\mbox{ with } \ \beta \geq \max\{8,\gamma\},\ \gamma > 3/2. \end{equation*}
The strong convergences of $\rho^{k}$ and $\chi_{\mc{S}}^{k}$ help us to identify the limit:
\begin{equation*}
\overline{\rho\chi_{\mc{S}}}= \rho\chi_{\mc{S}}.
\end{equation*}
Using the convergences of $\rho^{k}\chi_{\mc{S}}^{k}$ and $u^{k}$ in the equation
\begin{equation*}
P_{\mc{S}}^{k}u^{k}(t,x)= \frac{1}{m^{k}} \int\limits_{\Omega} \rho^{k}\chi_{\mc{S}}^{k} u^{k} + \left((J^k)^{-1} \int\limits_{\Omega}\rho^{k}\chi_{\mc{S}}^{k}((y-h^{k}(t)) \times u^{k})\ dy \right)\times (x-h^{k}(t)), 
\end{equation*}
and the convergence in \eqref{PN:weak}, we conclude that 
\begin{equation*}
\overline{u_{\mc{S}}}={P_{\mc{S}}}u.
\end{equation*}
 The convergence of the isometric propagator $\eta_{t,s}^{k}$ follows from the convergence of $P_{\mc{S}}^{k} u^{k}$ and equation \eqref{ODE-propagator}.
 \end{proof}
 We need the next result on weak sequential continuity to analyze the limiting system of Faedo-Galerkin as $N\rightarrow\infty$ in \cref{14:14}. The proof is similar to that of \cref{sequential1} and we skip it here.
\begin{proposition}\label{sequential11}
 Let us assume that $\rho^N_0 \in W^{1,\infty}(\Omega)$ with $\rho^N_0 \rightarrow \rho_0$ in $W^{1,\infty}(\Omega)$,
 $\rho^N$ satisfies \eqref{eq:rhoN} and
\begin{equation*}
\rho^N \rightarrow \rho\mbox{ strongly in  }L^p((0,T)\times \Omega),\ 1\leq p< \frac{4}{3}\beta\mbox{ with } \ \beta \geq \max\{8,\gamma\},\ \gamma > 3/2.
\end{equation*} 
Let $\{u^N,\chi_{\mc{S}}^N\}$ be a bounded sequence in $L^{\infty}(0,T; L^2(\Omega)) \times L^{\infty}((0,T)\times \mb{R}^3)$ satisfying
\eqref{n:transport}. Let $\{\rho^{N}\chi_{\mc{S}}^{N}\}$ be a bounded sequence in $L^{\infty}((0,T)\times\mathbb{R}^3)$ satisfying
\eqref{N:rhotrans}. Then, up to a subsequence, we have
\begin{align*}
& u^N \rightarrow u \mbox{ weakly-}* \mbox{ in }L^{\infty}(0,T; L^{2}(\Omega)),\\
& \chi_{\mc{S}}^N \rightarrow  \chi_{\mc{S}} \mbox{ weakly-}* \mbox{ in }L^{\infty}((0,T)\times \mb{R}^3) \mbox{ and}\mbox{ strongly} \mbox{ in }C([0,T]; L^p_{loc}(\mb{R}^3)), \ \forall\ 1\leq p<\infty,\\
&\rho^N\chi_{\mc{S}}^N \rightarrow  \rho\chi_{\mc{S}} \mbox{ weakly-}* \mbox{ in }L^{\infty}((0,T)\times \mb{R}^3) \mbox{ and}\mbox{ strongly} \mbox{ in }C([0,T]; L^p_{loc}(\mb{R}^3)), \ \forall \ 1\leq p<\infty,
\end{align*}
where $\chi_{\mc{S}}$ and $\rho\chi_{\mc{S}}$ satisfying \eqref{transport1} and \eqref{re:transport1}, respectively. Moreover,
\begin{align*}
& P_{\mc{S}}^N u^N \rightarrow P_{\mc{S}} u \mbox{ weakly-}* \mbox{ in }L^{\infty}(0,T; C^{\infty}_{loc}(\mb{R}^3)),\\
&\eta_{t,s}^N \rightarrow \eta_{t,s} \mbox{ weakly-}* \mbox{ in }W^{1,\infty}((0,T)^2; C^{\infty}_{loc}(\mb{R}^3)).
\end{align*}
 \end{proposition}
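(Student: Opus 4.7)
The plan is to mirror the proof of Proposition \ref{sequential1} while handling the fact that $u^N$ now converges only weakly-$*$ in $L^\infty(0,T;L^2(\Omega))$, so that the passage to the limit in the nonlinear terms appearing in $P_{\mc{S}}^N u^N$ must rely on strong convergence of the companion factor $\rho^N\chi_{\mc{S}}^N$. First, I would use the uniform bounds on $\{u^N\}$, $\{\chi_{\mc{S}}^N\}$ and $\{\rho^N\chi_{\mc{S}}^N\}$ to extract subsequences converging weakly-$*$ in their respective $L^\infty$ spaces, with limits $u$, $\overline{\chi_{\mc{S}}}$ and $\overline{\rho\chi_{\mc{S}}}$. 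Since $\mc{R}$ is finite-dimensional and $P_{\mc{S}}^N u^N$ is an explicit functional of $\rho^N\chi_{\mc{S}}^N$ and $u^N$ via \eqref{approx:projection}, these bounds entail $\|P_{\mc{S}}^N u^N\|_{L^\infty(0,T;\mc{R})}\leq c$; a further diagonal extraction then yields $P_{\mc{S}}^N u^N \to \overline{u_{\mc{S}}}$ weakly-$*$ in $L^\infty(0,T;C^\infty_{loc}(\mathbb{R}^3))$ for some rigid field $\overline{u_{\mc{S}}}$.

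Second, I would apply the DiPerna--Lions renormalization theory \cite{DiPerna1989} exactly as in \cref{sequential1}. The limit field $\overline{u_{\mc{S}}}$ is affine in $x$, hence divergence-free, lies in $L^1(0,T;W^{1,1}_{loc}(\mathbb{R}^3))$ and satisfies $\overline{u_{\mc{S}}}/(1+|x|)\in L^1((0,T)\times\mathbb{R}^3)$. Since $\chi_{\mc{S}}^N$ and $\rho^N\chi_{\mc{S}}^N$ satisfy the transport equations \eqref{n:transport} and \eqref{N:rhotrans} with uniformly bounded $L^\infty$ data, \cite[Theorem II.4]{DiPerna1989} produces strong convergence of both sequences in $C([0,T];L^p_{loc}(\mathbb{R}^3))$ for every $1\leq p<\infty$ and shows that the limits solve \eqref{transport1} and \eqref{re:transport1} with velocity field $\overline{u_{\mc{S}}}$. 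The hypothesis $\rho^N\to\rho$ strongly in $L^p((0,T)\times\Omega)$ then lets us identify $\overline{\chi_{\mc{S}}}=\chi_{\mc{S}}$ and $\overline{\rho\chi_{\mc{S}}}=\rho\chi_{\mc{S}}$ by uniqueness of limits.

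Third, I would identify $\overline{u_{\mc{S}}}=P_{\mc{S}} u$ by passing to the limit in
\begin{equation*}
P_{\mc{S}}^N u^N(t,x) = \frac{1}{m^N}\int_\Omega \rho^N\chi_{\mc{S}}^N u^N + \left((J^N)^{-1}\int_\Omega \rho^N\chi_{\mc{S}}^N\bigl((y-h^N(t))\times u^N\bigr)\,dy\right)\times (x-h^N(t)).
\end{equation*}
The strong convergence of $\rho^N\chi_{\mc{S}}^N$ in $C([0,T];L^p_{loc})$ first yields $m^N\to m$, $h^N\to h$ and $J^N\to J$; combined with the weak-$*$ convergence of $u^N$ in $L^\infty(0,T;L^2(\Omega))$, each integral on the right-hand side passes to the limit by the standard strong$\times$weak argument, and the limit is exactly $P_{\mc{S}} u$. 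The convergence of the isometric propagators $\eta_{t,s}^N$ in $W^{1,\infty}((0,T)^2;C^\infty_{loc}(\mathbb{R}^3))$ follows from the ODE \eqref{ODE-propagator}: the uniform $L^\infty$ bound on $P_{\mc{S}}^N u^N$ gives a uniform Lipschitz bound on $\eta_{t,s}^N$ through Cauchy--Lipschitz, and weak-$*$ passage to the limit in \eqref{ODE-propagator} identifies the limit with the isometric propagator associated to $P_{\mc{S}} u$.

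The main obstacle, compared with \cref{sequential1}, is precisely the replacement of the strong convergence of $u^N$ in $C([0,T];\mc{D}(\overline{\Omega}))$ by mere weak-$*$ convergence in $L^\infty(0,T;L^2(\Omega))$. This forces every nonlinear product involving $u^N$ (in the projection $P_{\mc{S}}^N u^N$, and then implicitly in the ODE for $\eta_{t,s}^N$) to be treated via the strong convergence of $\rho^N\chi_{\mc{S}}^N$, and it is the reason why only weak-$*$ (rather than strong) limits are obtained at the level of $P_{\mc{S}}^N u^N$ and $\eta_{t,s}^N$. Once the strong convergence of $\rho^N\chi_{\mc{S}}^N$ is in hand from the DiPerna--Lions step, all remaining passages to the limit are routine.
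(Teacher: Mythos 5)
Your proposal is correct and follows essentially the same route as the paper: the paper explicitly omits the proof of this proposition, stating it is "similar to that of Proposition~\ref{sequential1}", and your argument is precisely that adaptation — uniform bounds on $P_{\mc{S}}^N u^N$, the DiPerna--Lions theory \cite[Theorem II.4]{DiPerna1989} for the two transport equations, identification of the limits via the assumed strong convergence of $\rho^N$ together with the strong convergence of $\chi_{\mc{S}}^N$, and passage to the limit in the projection formula and the propagator ODE. Your observation that the weaker convergence of $u^N$ is exactly what downgrades the conclusions for $P_{\mc{S}}^N u^N$ and $\eta_{t,s}^N$ from strong to weak-$*$ is the right diagnosis of the only difference from Proposition~\ref{sequential1}.
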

 
   At the level of the Galerkin approximation, we have boundedness of $\sqrt{\rho^N}u^N$ in $L^{\infty}(0,T;L^2(\Omega))$ and $\rho^N$ is strictly positive, which means that we get the boundedness of $u^N$ in $L^{\infty}(0,T; L^2(\Omega))$. So, we can use \cref{sequential11} in the convergence analysis of the Galerkin scheme. In the case of the $\varepsilon$-level for the compressible fluid, we have boundedness of $\sqrt{\rho
^{\varepsilon}}u^{\varepsilon}$ in $L^{\infty}(0,T;L^2(\Omega))$ but $\rho^{\varepsilon}$ is only non-negative. On the other hand, we establish boundedness of $u^{\varepsilon}$ in $L^{2}(0,T;H^1(\Omega))$. We need the following result for the convergence analysis of the vanishing viscosity limit in \cref{14:18}.
\begin{proposition}\label{sequential-varepsilon}
Let $\rho^{\varepsilon}_0 \in W^{1,\infty}(\Omega)$ with $\rho^{\varepsilon}_0 \rightarrow \rho_0$ in $L^{\beta}(\Omega)$,
 $\rho^{\varepsilon}$ satisfies 
 \begin{equation*} 
\frac{\partial {\rho^{\varepsilon}}}{\partial t} + \operatorname{div}({\rho}^{\varepsilon} u^{\varepsilon}) = \Delta\rho^{\varepsilon} \mbox{ in }\, (0,T)\times \Omega, \quad  \frac{\partial \rho^{\varepsilon}}{\partial \nu}=0 \mbox{ on }\, \partial\Omega, \quad\rho^{\varepsilon}(0,x)=\rho_0^{\varepsilon}(x)\mbox{ in }\ \Omega,\quad\frac {\partial \rho_0^{\varepsilon}}{\partial \nu}\big |_{\partial \Omega} =0., 
\end{equation*} 
and
\begin{equation}\label{epsilon-rhoweak}
\rho^{\varepsilon}\rightarrow \rho\mbox{ weakly in }L^{\beta+1}((0,T)\times \Omega),\mbox{  with } \ \beta \geq \max\{8,\gamma\},\ \gamma > 3/2.
\end{equation}
Let $\{u^{\varepsilon},\chi_{\mc{S}}^{\varepsilon}\}$ be a bounded sequence in $L^{2}(0,T; H^1(\Omega)) \times L^{\infty}((0,T)\times \mb{R}^3)$ satisfying
 \begin{equation}\label{epsilon:transport}
\frac{\partial {\chi}^{\varepsilon}_{\mc{S}}}{\partial t} + \operatorname{div}(P^{\varepsilon}_{\mc{S}}u^{\varepsilon}\chi^{\varepsilon}_{\mc{S}}) =0 \mbox{  in  }(0,T)\times\mathbb{R}^3,\quad {\chi}^{\varepsilon}_{\mc{S}}|_{t=0}=\mathds{1}_{\mc{S}_0}\mbox{  in  }\mathbb{R}^3,
\end{equation}
and let $\{\rho^{\varepsilon}\chi_{\mc{S}}^{\varepsilon}\}$ be a bounded sequence in $L^{\infty}((0,T)\times\mathbb{R}^3)$ satisfying
\begin{equation}\label{epsilon:rhotrans}
\frac{\partial}{\partial t}(\rho^{\varepsilon}{\chi}^{\varepsilon}_{\mc{S}}) + \operatorname{div}(P^{\varepsilon}_{\mc{S}}u^{\varepsilon}(\rho^{\varepsilon}\chi^{\varepsilon}_{\mc{S}})) =0 \mbox{  in  }(0,T)\times\mathbb{R}^3,\quad \rho^{\varepsilon}{\chi}^{\varepsilon}_{\mc{S}}|_{t=0}=\rho^{\varepsilon}_0\mathds{1}_{\mc{S}_0}\mbox{  in  }\mathbb{R}^3,
\end{equation}
where $P^{\varepsilon}_{\mc{S}}:L^2(\Omega)\rightarrow L^2(\mc{S}^{\varepsilon}(t))$ is the orthogonal projection onto rigid fields with $\mc{S}^{\varepsilon}(t) \Subset \Omega$ being a bounded, regular domain for all $t \in [0,T]$. Then up to a subsequence, we have
\begin{align*}
& u^{\varepsilon} \rightarrow u \mbox{ weakly } \mbox{ in }L^{2}(0,T; H^{1}(\Omega)),\\
& \chi_{\mc{S}}^{\varepsilon} \rightarrow  \chi_{\mc{S}} \mbox{ weakly-}* \mbox{ in }L^{\infty}((0,T)\times \mb{R}^3) \mbox{ and }\mbox{ strongly } \mbox{ in }C([0,T]; L^p_{loc}(\mb{R}^3)) \ (1\leq p<\infty),\\
&\rho^{\varepsilon}\chi_{\mc{S}}^{\varepsilon} \rightarrow  \rho\chi_{\mc{S}} \mbox{ weakly-}* \mbox{ in }L^{\infty}((0,T)\times \mb{R}^3) \mbox{ and }\mbox{ strongly } \mbox{ in }C([0,T]; L^p_{loc}(\mb{R}^3)) \ (1\leq p<\infty),
\end{align*}
with $\chi_{\mc{S}}$ and $\rho\chi_{\mc{S}}$ satisfying \eqref{transport1} and \eqref{re:transport1} respectively. Moreover,
\begin{align*}
& P_{\mc{S}}^{\varepsilon} u^{\varepsilon} \rightarrow P_{\mc{S}} u \mbox{ weakly } \mbox{ in }L^{2}(0,T; C^{\infty}_{loc}(\mb{R}^3)),\\
&\eta_{t,s}^{\varepsilon} \rightarrow \eta_{t,s} \mbox{ weakly } \mbox{ in }H^{1}((0,T)^2; C^{\infty}_{loc}(\mb{R}^3)).
\end{align*}
 \end{proposition}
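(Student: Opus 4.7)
The plan is to mirror the proofs of \cref{sequential1} and \cref{sequential11}, with the modifications required by the weaker regularity of $u^\varepsilon$ (now only in $L^2(0,T;H^1(\Omega))$) and by the fact that $\rho^\varepsilon$ is assumed to converge only weakly in $L^{\beta+1}$. Along a subsequence, I first extract the weak $L^2(0,T;H^1(\Omega))$ limit $u$ of $u^\varepsilon$, and the weak-$*$ limits $\chi_{\mc{S}}$ and $\overline{\rho\chi_{\mc{S}}}$ of $\chi_{\mc{S}}^\varepsilon$ and $\rho^\varepsilon\chi_{\mc{S}}^\varepsilon$ in $L^\infty((0,T)\times\mb{R}^3)$.

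The heart of the argument is to control the rigid velocity $P_{\mc{S}}^\varepsilon u^\varepsilon$. Its linear and angular coefficients are obtained by integrating $\rho^\varepsilon\chi_{\mc{S}}^\varepsilon u^\varepsilon$ and $\rho^\varepsilon\chi_{\mc{S}}^\varepsilon((x-h^\varepsilon)\times u^\varepsilon)$ over $\Omega$, divided by $m^\varepsilon$ and $(J^\varepsilon)^{-1}$ respectively. Since $\rho^\varepsilon\chi_{\mc{S}}^\varepsilon$ is bounded in $L^\infty$, $u^\varepsilon$ in $L^2(0,T;L^2)$, and $m^\varepsilon$, $J^\varepsilon$ are uniformly bounded below (these lower bounds being preserved by the transport equation \eqref{epsilon:rhotrans}), the coefficients are bounded in $L^2(0,T)$, so that $P_{\mc{S}}^\varepsilon u^\varepsilon$ is bounded in $L^2(0,T;\mc{R})$ and, up to subsequence, converges weakly to some $\overline{u_{\mc{S}}}$. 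Because $P_{\mc{S}}^\varepsilon u^\varepsilon$ is a smooth rigid field, it satisfies $\operatorname{div} P_{\mc{S}}^\varepsilon u^\varepsilon=0$ and $P_{\mc{S}}^\varepsilon u^\varepsilon/(1+|x|)\in L^1(0,T;L^1(\mb{R}^3))$. I then apply \cite[Theorem II.4]{DiPerna1989} to the transport equations \eqref{epsilon:transport} and \eqref{epsilon:rhotrans} and obtain strong convergence of $\chi_{\mc{S}}^\varepsilon$ and $\rho^\varepsilon\chi_{\mc{S}}^\varepsilon$ in $C([0,T];L^p_{loc}(\mb{R}^3))$ for every $1\leq p<\infty$; the limits solve the corresponding transport equations with coefficient $\overline{u_{\mc{S}}}$ and initial data $\mathds{1}_{\mc{S}_0}$ and $\rho_0\mathds{1}_{\mc{S}_0}$.

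The identification of the limits is then a standard strong$\times$weak argument. The strong convergence $\chi_{\mc{S}}^\varepsilon\to\chi_{\mc{S}}$ in $C([0,T];L^p_{loc})$ combined with the weak $L^{\beta+1}$ convergence of $\rho^\varepsilon$ yields $\overline{\rho\chi_{\mc{S}}}=\rho\chi_{\mc{S}}$. Inserting the strong limits of $\rho^\varepsilon\chi_{\mc{S}}^\varepsilon$, $m^\varepsilon$, $h^\varepsilon$, $J^\varepsilon$ into \eqref{approx:projection} and pairing with the weak $L^2$ limit of $u^\varepsilon$ identifies $\overline{u_{\mc{S}}}=P_{\mc{S}} u$. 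The announced weak $L^2(0,T;C^\infty_{loc}(\mb{R}^3))$ convergence of $P_{\mc{S}}^\varepsilon u^\varepsilon$ to $P_{\mc{S}} u$ then follows, since both are rigid fields whose coefficients converge weakly in $L^2(0,T)$.

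The main obstacle compared to \cref{sequential1,sequential11} is the loss of uniform-in-time regularity of the velocity: $P_{\mc{S}}^\varepsilon u^\varepsilon$ lies only in $L^2_t$, which requires extra care both in verifying the applicability of DiPerna--Lions (spatial smoothness is granted by rigidity, which is crucial) and in passing to the limit in the bilinear expressions defining $P_{\mc{S}}^\varepsilon u^\varepsilon$. Once this is secured, the final convergence $\eta^\varepsilon_{t,s}\to\eta_{t,s}$ weakly in $H^1((0,T)^2;C^\infty_{loc}(\mb{R}^3))$ follows from the ODE \eqref{ODE-propagator} together with the weak $L^2$ convergence of $P_{\mc{S}}^\varepsilon u^\varepsilon$, since $\partial_t\eta^\varepsilon_{t,s}$ is uniformly bounded in $L^2((0,T)^2;C^\infty_{loc})$ thanks to the rigid structure of $P_{\mc{S}}^\varepsilon u^\varepsilon$.
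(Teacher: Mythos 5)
Your proposal is correct and follows essentially the same route as the paper's proof: bound $P^{\varepsilon}_{\mc{S}}u^{\varepsilon}$ in $L^2(0,T;\mc{R})$ using the $L^\infty$ bound on $\rho^{\varepsilon}\chi_{\mc{S}}^{\varepsilon}$ and the $L^2$ bound on $u^{\varepsilon}$, apply the DiPerna--Lions stability theorem to the two transport equations, identify the limits by a strong-times-weak argument, and read off the propagator convergence from \eqref{ODE-propagator}. The additional remarks on the uniform lower bounds for $m^{\varepsilon}$, $J^{\varepsilon}$ and on the rigid structure supplying the spatial smoothness needed for DiPerna--Lions are consistent with (and slightly more explicit than) the paper's argument.
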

 \begin{proof}
 As $\{u^{\varepsilon}\}$ is a bounded sequence in $L^2(0,T;H^1(\Omega))$
  and $\{ \rho^{\varepsilon} \chi_{\mc{S}}^{\varepsilon}\} \mbox{ is bounded in }L^{\infty}((0,T)\times \mb{R}^3)$,
 we obtain that $\{P^{\varepsilon}_{\mc{S}}u^{\varepsilon}\}$ is bounded in $L^2(0,T;\mc{R})$. Thus, up to a subsequence,
 \begin{equation}\label{Pepsilon:weak}
 P^{\varepsilon}_{\mc{S}}u^{\varepsilon} \rightarrow \overline{u_{\mc{S}}} \mbox{ weakly in }L^2(0,T;\mc{R}).
 \end{equation}
 Here, obviously $P^{\varepsilon}_{\mc{S}}u^{\varepsilon} \in L^1(0,T; L^{\infty}_{loc}(\mb{R}^3))$, $\operatorname{div}(P^{\varepsilon}_{\mc{S}}u^{\varepsilon})=0$ and $\overline{u_{\mc{S}}} \in L^{1}(0,T;W^{1,1}_{loc}(\mb{R}^3))$ satisfies 
\begin{equation*}
\frac{\overline{u_{\mc{S}}}}{1+|x|} \in L^1(0,T;L^1(\mb{R}^3)).
\end{equation*}
Moreover, $\{\chi_{\mc{S}}^{\varepsilon}\} \mbox{ is bounded in }L^{\infty}((0,T)\times \mb{R}^3)$, $\chi_{\mc{S}}^{\varepsilon}$ satisfies \eqref{epsilon:transport} and $\{\rho^{\varepsilon}\chi_{\mc{S}}^{\varepsilon}\}$ is bounded in $L^{\infty}((0,T)\times\mathbb{R}^3)$, $\rho^{\varepsilon}\chi_{\mc{S}}^{\varepsilon}$ satisfies \eqref {epsilon:rhotrans}. As we have verified all the required conditions, we can apply \cite[Theorem II.4, Page 521]{DiPerna1989} to  obtain 
$$\chi_{\mc{S}}^{\varepsilon} \mbox{ converges weakly-}*\mbox{ in  }L^{\infty}((0,T)\times \mb{R}^3),\mbox{ strongly in }C([0,T]; L^p_{loc}(\mb{R}^3)) \ (1\leq p<\infty),$$
$$\rho^{\varepsilon}\chi_{\mc{S}}^{\varepsilon} \mbox{ converges weakly-}*\mbox{ in  }L^{\infty}((0,T)\times \mb{R}^3),\mbox{ strongly in }C([0,T]; L^p_{loc}(\mb{R}^3)) \ (1\leq p<\infty).$$
Let the limit of $\chi_{\mc{S}}^{\varepsilon}$ be denoted by ${\chi_{\mc{S}}}$; it satisfies  
\begin{equation*}
\frac{\partial {\chi_{\mc{S}}}}{\partial t} + \operatorname{div}(\overline{u_{\mc{S}}}\ {\chi_{\mc{S}}}) =0 \mbox{  in  }\mathbb{R}^3,\quad {\chi_{\mc{S}}}|_{t=0}=\mathds{1}_{\mc{S}_0}\mbox{  in  }\mathbb{R}^3.
\end{equation*} 
Let the limit of $\rho^{\varepsilon}\chi_{\mc{S}}^{\varepsilon}$ be denoted by $\overline{\rho\chi_{\mc{S}}}$; it satisfies  
\begin{equation*}
\frac{\partial (\overline{\rho\chi_{\mc{S}}})}{\partial t} + \operatorname{div}(\overline{u_{\mc{S}}}\ \overline{\rho\chi_{\mc{S}}}) =0 \mbox{  in  }(0,T)\times\mathbb{R}^3,\quad \overline{\rho\chi_{\mc{S}}}|_{t=0}=\rho_0\mathds{1}_{\mc{S}_0}\mbox{  in  }\mathbb{R}^3.
\end{equation*}
The weak convergence of $\rho^{\varepsilon}$ and strong convergence of $\chi_{\mc{S}}^{\varepsilon}$ help us to identify the limit:
\begin{equation*}
\overline{\rho\chi_{\mc{S}}}= \rho\chi_{\mc{S}}.
\end{equation*}
Using the convergences of $\rho^{\varepsilon}\chi_{\mc{S}}^{\varepsilon}$ and  $u^{\varepsilon}$ in the equation
\begin{equation*}
P_{\mc{S}}^{\varepsilon}u^{\varepsilon}(t,x)= \frac{1}{m^{\varepsilon}} \int\limits_{\Omega} \rho^{\varepsilon}\chi_{\mc{S}}^{\varepsilon} u^{\varepsilon} + \left((J^{\varepsilon})^{-1} \int\limits_{\Omega}\rho^{\varepsilon}\chi_{\mc{S}}^{\varepsilon}((y-h^{\varepsilon}(t)) \times u^{\varepsilon})\ dy \right)\times (x-h^{\varepsilon}(t)), 
\end{equation*}
and the convergence in \eqref{Pepsilon:weak}, we conclude that 
\begin{equation*}
\overline{u_{\mc{S}}}={P_{\mc{S}}}u.
\end{equation*}
 The convergence of the isometric propagators $\eta_{t,s}^{\varepsilon}$ follows from the convergence of $P_{\mc{S}}^{\varepsilon} u^{\varepsilon}$ and equation \eqref{ODE-propagator}.
 \end{proof}
 
 In the limit of $u^{\delta}$, we can expect the  boundedness of the limit only in $L^{2}(0,T;L^2(\Omega))$ but not in $L^{2}(0,T;H^1(\Omega))$.
 That is why we need a different sequential continuity result, which we use in \cref{S4}.
\begin{proposition}\label{sequential2}
Let $\rho^{\delta}_0 \in L^{\beta}(\Omega)$ with $\rho^{\delta}_0 \rightarrow \rho_0$ in $L^{\gamma}(\Omega)$, let
 $\rho^{\delta}$ satisfy 
 \begin{equation*} 
\frac{\partial {\rho^{\delta}}}{\partial t} + \operatorname{div}({\rho}^{\delta} u^{\delta}) = 0 \mbox{ in }\, (0,T)\times \Omega, \quad\rho^{\delta}(0,x)=\rho_0^{\delta}(x)\mbox{ in }\ \Omega, \end{equation*} 
and
\begin{equation}\label{delta-rhoweak}
\rho^{\delta}\rightarrow \rho\mbox{ weakly in }L^{\gamma+\theta}((0,T)\times \Omega),\mbox{  with  }\gamma>3/2,\, \theta=\frac{2}{3}\gamma-1.
\end{equation}
 Let $\{u^{\delta},\chi_{\mc{S}}^{\delta}\}$ be a bounded sequence in $L^{2}(0,T; L^2(\Omega)) \times L^{\infty}((0,T)\times \mb{R}^3)$ satisfying
 \begin{equation}\label{delta:transport}
\frac{\partial {\chi}^{\delta}_{\mc{S}}}{\partial t} + \operatorname{div}(P^{\delta}_{\mc{S}}u^{\delta}\chi^{\delta}_{\mc{S}}) =0 \mbox{  in  }(0,T)\times\mathbb{R}^3,\quad {\chi}^{\delta}_{\mc{S}}|_{t=0}=\mathds{1}_{\mc{S}_0}\mbox{  in  }\mathbb{R}^3,
\end{equation}
and let $\{\rho^{\delta}\chi_{\mc{S}}^{\delta}\}$ be a bounded sequence in $L^{\infty}((0,T)\times\mathbb{R}^3)$ satisfying
\begin{equation}\label{delta:rhotrans}
\frac{\partial}{\partial t}(\rho^{\delta}{\chi}^{\delta}_{\mc{S}}) + \operatorname{div}(P^{\delta}_{\mc{S}}u^{\delta}(\rho^{\delta}\chi^{\delta}_{\mc{S}})) =0 \mbox{  in  }(0,T)\times\mathbb{R}^3,\quad \rho^{\delta}{\chi}^{\delta}_{\mc{S}}|_{t=0}=\rho^{\delta}_0\mathds{1}_{\mc{S}_0}\mbox{  in  }\mathbb{R}^3,
\end{equation}
where $P^{\delta}_{\mc{S}}:L^2(\Omega)\rightarrow L^2(\mc{S}^{\delta}(t))$ is the orthogonal projection onto rigid fields with $\mc{S}^{\delta}(t) \Subset \Omega$ being a bounded, regular domain for all $t \in [0,T]$. Then, up to a subsequence, we have
\begin{align*}
& u^{\delta} \rightarrow u \mbox{ weakly } \mbox{ in }L^{2}(0,T; L^{2}(\Omega)),\\
& \chi_{\mc{S}}^{\delta} \rightarrow  \chi_{\mc{S}} \mbox{ weakly-}* \mbox{ in }L^{\infty}((0,T)\times \mb{R}^3) \mbox{ and }\mbox{ strongly } \mbox{ in }C([0,T]; L^p_{loc}(\mb{R}^3)) \ (1\leq p<\infty),\\
& \rho^{\delta}\chi_{\mc{S}}^{\delta} \rightarrow  \rho\chi_{\mc{S}} \mbox{ weakly-}* \mbox{ in }L^{\infty}((0,T)\times \mb{R}^3) \mbox{ and }\mbox{ strongly } \mbox{ in }C([0,T]; L^p_{loc}(\mb{R}^3)) \ (1\leq p<\infty),
\end{align*}
with $\chi_{\mc{S}}$ and $\rho\chi_{\mc{S}}$ satisfying \eqref{transport1} and \eqref{re:transport1} respectively. Moreover,
\begin{align*}
& P_{\mc{S}}^{\delta} u^{\delta} \rightarrow P_{\mc{S}} u \mbox{ weakly } \mbox{ in }L^{2}(0,T; C^{\infty}_{loc}(\mb{R}^3)),\\
& \eta_{t,s}^{\delta} \rightarrow \eta_{t,s} \mbox{ weakly } \mbox{ in }H^{1}((0,T)^2; C^{\infty}_{loc}(\mb{R}^3)).
\end{align*}
 \end{proposition}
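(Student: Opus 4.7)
The plan is to mirror the proof of Proposition~\ref{sequential-varepsilon}, exploiting the key observation that the relevant transport velocity in \eqref{delta:transport}--\eqref{delta:rhotrans} is the rigid projection $P^{\delta}_{\mc{S}}u^{\delta}$ rather than $u^{\delta}$ itself. Although $u^{\delta}$ is only bounded in $L^{2}(0,T;L^{2}(\Omega))$, the projection lives in the finite-dimensional space $\mc{R}$ and inherits, via the formula \eqref{approx:projection}, the smoothness in $x$ needed for the DiPerna--Lions theory. I would first combine the $L^{2}$ bound on $u^{\delta}$ with the $L^{\infty}$ bound on $\rho^{\delta}\chi^{\delta}_{\mc{S}}$ to deduce that $\{P^{\delta}_{\mc{S}}u^{\delta}\}$ is bounded in $L^{2}(0,T;\mc{R})$, hence (up to a subsequence) converges weakly in $L^{2}(0,T;\mc{R})$ to some $\overline{u_{\mc{S}}}$; by definition of $\mc{R}$ the fields $P^{\delta}_{\mc{S}}u^{\delta}$ belong to $L^{2}(0,T;W^{1,\infty}_{loc}(\mb{R}^{3}))$, have zero divergence, and satisfy the linear growth condition required by \cite[Theorem~II.4]{DiPerna1989}.

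Second, I would apply that theorem to \eqref{delta:transport} and \eqref{delta:rhotrans} to obtain
\begin{equation*}
\chi^{\delta}_{\mc{S}}\to\chi_{\mc{S}},\quad \rho^{\delta}\chi^{\delta}_{\mc{S}}\to\overline{\rho\chi_{\mc{S}}}\qquad \text{weakly-}*\text{ in }L^{\infty}\text{ and strongly in }C([0,T];L^{p}_{loc}(\mb{R}^{3}))
\end{equation*}
for every $1\leq p<\infty$, with the limits solving the analogous transport equations driven by $\overline{u_{\mc{S}}}$. The third step identifies $\overline{\rho\chi_{\mc{S}}}$ with $\rho\chi_{\mc{S}}$: combining the weak convergence $\rho^{\delta}\to\rho$ in $L^{\gamma+\theta}$ from \eqref{delta-rhoweak} with the strong convergence of $\chi^{\delta}_{\mc{S}}$ in $C([0,T];L^{q}_{loc})$ for the conjugate exponent, the product $\rho^{\delta}\chi^{\delta}_{\mc{S}}$ converges weakly in $L^{\gamma+\theta}_{loc}$ to $\rho\chi_{\mc{S}}$, which must therefore coincide with $\overline{\rho\chi_{\mc{S}}}$.

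Fourth, I would pass to the limit in \eqref{approx:projection}. The mass $m^{\delta}=\int_{\Omega}\rho^{\delta}\chi^{\delta}_{\mc{S}}\,dx$ is conserved by \eqref{delta:rhotrans}, so $m^{\delta}=\int_{\Omega}\rho^{\delta}_{0}\mathds{1}_{\mc{S}_{0}}$, which converges to $m>0$ and stays bounded away from zero; the strong convergence of $\rho^{\delta}\chi^{\delta}_{\mc{S}}$ in $C([0,T];L^{p}_{loc})$ upgrades this to the convergences $h^{\delta}\to h$ uniformly in $t$ and $J^{\delta}\to J$. The product $\int_{\Omega}\rho^{\delta}\chi^{\delta}_{\mc{S}}u^{\delta}$ passes to the limit by pairing the weak convergence $u^{\delta}\rightharpoonup u$ in $L^{2}(0,T;L^{2}(\Omega))$ with the strong convergence of $\rho^{\delta}\chi^{\delta}_{\mc{S}}$ in $C([0,T];L^{2}_{loc})$, and the same reasoning handles the term $\int_{\Omega}\rho^{\delta}\chi^{\delta}_{\mc{S}}((y-h^{\delta})\times u^{\delta})\,dy$. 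This identifies $\overline{u_{\mc{S}}}=P_{\mc{S}}u$. Finally, the convergence of $\eta^{\delta}_{t,s}$ weakly in $H^{1}((0,T)^{2};C^{\infty}_{loc}(\mb{R}^{3}))$ follows from the ODE \eqref{ODE-propagator} together with the $L^{2}$-in-time convergence of $P^{\delta}_{\mc{S}}u^{\delta}$.

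The only genuinely delicate point, and the place where the present proposition departs from Proposition~\ref{sequential-varepsilon}, is the identification step in the third paragraph: here $\rho^{\delta}$ converges only weakly (and in a worse space), so the passage to the limit in the nonlinear products entering $P^{\delta}_{\mc{S}}u^{\delta}$ must rely crucially on the strong compactness of $\rho^{\delta}\chi^{\delta}_{\mc{S}}$ coming from the transport equation \eqref{delta:rhotrans}, rather than on any compactness of $\rho^{\delta}$ alone. Everything else follows the lines of Proposition~\ref{sequential-varepsilon}.
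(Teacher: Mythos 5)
Your proposal is correct and follows essentially the same route as the paper's proof: boundedness of $P^{\delta}_{\mc{S}}u^{\delta}$ in $L^{2}(0,T;\mc{R})$, the DiPerna--Lions stability theorem for the two transport equations, identification of $\overline{\rho\chi_{\mc{S}}}=\rho\chi_{\mc{S}}$ via the weak--strong pairing of $\rho^{\delta}$ with $\chi^{\delta}_{\mc{S}}$, identification of $\overline{u_{\mc{S}}}=P_{\mc{S}}u$ from the projection formula, and the ODE for the propagator. Your fourth step merely spells out details (conservation of $m^{\delta}$, convergence of $h^{\delta}$ and $J^{\delta}$) that the paper leaves implicit.
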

 \begin{proof}
 As $\{u^{\delta}\}$ is a bounded sequence in $L^2(0,T;L^2(\Omega))$  and $\{ \rho^{\delta} \chi_{\mc{S}}^{\delta}\} \mbox{ is bounded in }L^{\infty}((0,T)\times \mb{R}^3)$, we obtain that $\{P^{\delta}_{\mc{S}}u^{\delta}\}$ is bounded in $L^2(0,T;\mc{R})$. Thus, up to a subsequence,
 \begin{equation}\label{P:weak}
 P^{\delta}_{\mc{S}}u^{\delta} \rightarrow \overline{u_{\mc{S}}} \mbox{ weakly in }L^2(0,T;\mc{R}).
 \end{equation}
 Here, obviously $P^{\delta}_{\mc{S}}u^{\delta} \in L^1(0,T; L^{\infty}_{loc}(\mb{R}^3))$, $\operatorname{div}(P^{\delta}_{\mc{S}}u^{\delta})=0$ and $\overline{u_{\mc{S}}} \in L^{1}(0,T;W^{1,1}_{loc}(\mb{R}^3))$ satisfies 
\begin{equation*}
\frac{\overline{u_{\mc{S}}}}{1+|x|} \in L^1(0,T;L^1(\mb{R}^3)).
\end{equation*}
Moreover, $\{\chi_{\mc{S}}^{\delta}\} \mbox{ is bounded in }L^{\infty}((0,T)\times \mb{R}^3)$, $\chi_{\mc{S}}^{\delta}$ satisfies \eqref{delta:transport}  and $\{\rho^{\delta}\chi_{\mc{S}}^{\delta}\}$ is bounded in $L^{\infty}((0,T)\times\mathbb{R}^3)$, $\rho^{\delta}\chi_{\mc{S}}^{\delta}$ satisfies \eqref {delta:rhotrans}. Now we can apply \cite[Theorem II.4, Page 521]{DiPerna1989} to  obtain 
$$\chi_{\mc{S}}^{\delta} \mbox{ converges weakly-}*\mbox{ in  }L^{\infty}((0,T)\times \mb{R}^3),\mbox{ and strongly in }C([0,T]; L^p_{loc}(\mb{R}^3)) \ (1\leq p<\infty),$$
$$\rho^{\delta}\chi_{\mc{S}}^{\delta} \mbox{ converges weakly-}* \mbox{ in }L^{\infty}((0,T)\times \mb{R}^3) \mbox{ and }\mbox{ strongly } \mbox{ in }C([0,T]; L^p_{loc}(\mb{R}^3)) \ (1\leq p<\infty).$$
Let the weak limit of $\chi_{\mc{S}}^{\delta}$ be denoted by ${\chi_{\mc{S}}}$. Then it satisfies  
\begin{equation*}
\frac{\partial {\chi_{\mc{S}}}}{\partial t} + \operatorname{div}(\overline{u_{\mc{S}}}\ {\chi_{\mc{S}}}) =0 \mbox{  in  }(0,T)\times\mathbb{R}^3,\quad {\chi_{\mc{S}}}|_{t=0}=\mathds{1}_{\mc{S}_0}\mbox{  in  }\mathbb{R}^3,
\end{equation*}
Let the limit of $\rho^{\delta}\chi_{\mc{S}}^{\delta}$ be denoted by $\overline{\rho\chi_{\mc{S}}}$; it satisfies  
\begin{equation*}
\frac{\partial (\overline{\rho\chi_{\mc{S}}})}{\partial t} + \operatorname{div}(\overline{u_{\mc{S}}}\ \overline{\rho\chi_{\mc{S}}}) =0 \mbox{  in  }(0,T)\times\mathbb{R}^3,\quad \overline{\rho\chi_{\mc{S}}}|_{t=0}=\rho_0\mathds{1}_{\mc{S}_0}\mbox{  in  }\mathbb{R}^3.
\end{equation*}
From \eqref{delta-rhoweak}, we know that\begin{equation*}
\rho^{\delta}\rightarrow \rho\mbox{ weakly in }L^{\gamma+\theta}((0,T)\times \Omega),\mbox{  with  }\gamma>3/2,\, \theta=\frac{2}{3}\gamma-1.
\end{equation*}
The weak convergence of $\rho^{\delta}$ to $\rho$ and strong convergence of $\chi_{\mc{S}}^{\delta}$ to $\chi_{\mc{S}}$ help us to identify the limit:
\begin{equation*}
\overline{\rho\chi_{\mc{S}}}= \rho\chi_{\mc{S}},
\end{equation*} 
Using the convergences of $\rho^{\delta}\chi_{\mc{S}}^{\delta}$ and $u^{\delta}$ in the equation
\begin{equation*}
P_{\mc{S}}^{\delta}u^{\delta}(t,x)= \frac{1}{m^{\delta}} \int\limits_{\Omega} \rho^{\delta}\chi_{\mc{S}}^{\delta} u^{\delta} + \left((J^{\delta})^{-1} \int\limits_{\Omega}\rho^{\delta}\chi_{\mc{S}}^{\delta}((y-h^{\delta}(t)) \times u^{\delta})\ dy \right)\times (x-h^{\delta}(t)), 
\end{equation*}
and the convergence in \eqref{P:weak}, we conclude that 
\begin{equation*}
\overline{u_{\mc{S}}}={P_{\mc{S}}}u.
\end{equation*}
 The convergence of the isometric propagator $\eta_{t,s}^{\delta}$ follows from the convergence of $P_{\mc{S}}^{\delta} u^{\delta}$ and equation \eqref{ODE-propagator}.
 \end{proof}

 \section{Existence proofs of Approximate solutions}\label{S3}
In this section, we present the proofs of the existence results of the three approximation levels. We start with the $N$-level approximation in \cref{sec:Galerkin} and the limit as $N\to\infty$ in \cref{14:14}, which yields existence at the $\varepsilon$-level. The convergence of $\varepsilon\to 0$, considered in \cref{14:18}, then shows existence of solutions at the $\delta$-level. The final limit problem as $\delta\to 0$ is the topic of \cref{S4}.

\subsection{Existence of the Faedo-Galerkin approximation}\label{sec:Galerkin}
In this subsection, we construct a solution $(\mc{S}^N,\rho^N,u^{N})$ to the problem \eqref{galerkin-approx1}--\eqref{galerkin-initial}. 
First we recall a known maximal regularity result for the parabolic problem \eqref{galerkin-approx2}:
\begin{proposition}\cite[Proposition 7.39, Page 345]{MR2084891}\label{parabolic}
Suppose that $\Omega$ is a regular bounded domain  and assume $\rho_0 \in W^{1,\infty}(\Omega)$, $\underline{\rho} \leq \rho_0 \leq \overline{\rho}$, $u \in L^{\infty}(0,T;W^{1,\infty}(\Omega))$. Then the parabolic problem \eqref{galerkin-approx2} admits a unique solution in the solution space
\begin{equation*}
\rho \in L^2(0,T;H^{2}(\Omega)) \cap C([0,T]; H^{1}(\Omega)) \cap H^{1}(0,T;L^{2}(\Omega))
\end{equation*} and it satisfies 
\begin{equation} \label{bounds-on-rho}
\underline{\rho}\exp \left(-\int\limits_0^{\tau} \|\operatorname{div}u(s)\|_{L^{\infty}(\Omega)}\ ds\right)\leq \rho(\tau,x)\leq \overline{\rho}\exp \left(\int\limits_0^{\tau} \|\operatorname{div}u(s)\|_{L^{\infty}(\Omega)}\ ds\right)
\end{equation}
for any $\tau \in [0,T]$.
\end{proposition}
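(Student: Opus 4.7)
The equation in \eqref{galerkin-approx2} is a linear second-order parabolic Neumann problem for $\rho$ which, upon expanding the divergence, reads
\begin{equation*}
\partial_t \rho - \varepsilon \Delta \rho + u\cdot\nabla \rho + (\operatorname{div} u)\rho = 0,\quad \partial_\nu \rho|_{\partial\Omega}=0,\quad \rho|_{t=0}=\rho_0.
\end{equation*}
Since the coefficients and data have the regularity stated in the hypothesis, my plan is to construct a solution by a Galerkin scheme in the eigenbasis of the Neumann Laplacian, obtain the asserted function-space regularity via parabolic $L^2$-energy estimates combined with elliptic $H^2$-regularity, and derive the pointwise bounds \eqref{bounds-on-rho} by a Stampacchia/maximum-principle argument. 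Uniqueness then follows routinely from a Gronwall estimate on the difference of two solutions (using $u\in L^\infty(0,T;W^{1,\infty}(\Omega))$).

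\textbf{Construction and regularity.} Let $\{\psi_k\}_{k\geq 1}\subset H^2(\Omega)$ be an orthonormal basis of $L^2(\Omega)$ made of eigenfunctions of $-\Delta$ with $\partial_\nu \psi_k=0$. Look for $\rho^n(t,\cdot)=\sum_{k=1}^n c_k(t)\psi_k$ solving the Galerkin ODE obtained by projecting the equation onto $X_n=\operatorname{span}(\psi_1,\dots,\psi_n)$; local-in-time solvability is immediate from Cauchy--Lipschitz, and testing against $\rho^n$ yields
\begin{equation*}
\tfrac12 \tfrac{d}{dt}\|\rho^n\|_{L^2}^2 + \varepsilon\|\nabla\rho^n\|_{L^2}^2 \leq C\bigl(\|u\|_{W^{1,\infty}}\bigr)\|\rho^n\|_{L^2}^2,
\end{equation*}
so Gronwall gives a global $L^\infty_t L^2_x \cap L^2_t H^1_x$ bound. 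Testing against $-\Delta \rho^n$ (which again lies in $X_n$ because the basis diagonalises $-\Delta$) upgrades this to $L^\infty_t H^1_x \cap L^2_t H^2_x$, and reading $\partial_t\rho^n$ off the equation then places $\partial_t\rho^n$ in $L^2_t L^2_x$. Passing to the limit $n\to\infty$ produces a solution in the asserted class.

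\textbf{Pointwise bounds.} Define $M(t):=\overline\rho\,\exp\!\bigl(\int_0^t \|\operatorname{div} u(s)\|_{L^\infty}\,ds\bigr)$, so $M'=\|\operatorname{div} u\|_{L^\infty}M$. The difference $w:=\rho-M$ satisfies
\begin{equation*}
\partial_t w - \varepsilon\Delta w + u\cdot\nabla w + (\operatorname{div} u)\,w = -M\bigl(\|\operatorname{div} u\|_{L^\infty} + \operatorname{div} u\bigr)\leq 0,
\end{equation*}
with $w(0,\cdot)\leq 0$ and $\partial_\nu w=0$. Testing against $w_+:=\max(w,0)\in L^2_t H^1_x$ and using the Neumann condition to discard the boundary terms in the transport and diffusion parts, one obtains
\begin{equation*}
\tfrac12 \tfrac{d}{dt}\|w_+\|_{L^2}^2 + \varepsilon\|\nabla w_+\|_{L^2}^2 \leq \tfrac12\|\operatorname{div} u\|_{L^\infty}\|w_+\|_{L^2}^2,
\end{equation*}
which together with $w_+(0)=0$ and Gronwall forces $w_+\equiv 0$, i.e.\ $\rho\leq M$. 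The lower bound is obtained symmetrically by running the same argument for $m(t)-\rho$ with $m(t):=\underline\rho\exp(-\int_0^t\|\operatorname{div} u\|_{L^\infty})$, noting that $m$ satisfies a reverse inequality and that positivity of $\rho_0-m(0)=\rho_0-\underline\rho\geq 0$ is preserved.

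\textbf{Main obstacle.} The Galerkin construction and the $H^2$-regularity are essentially routine. The one delicate point is the justification of the Stampacchia truncation step: one must verify that $w_\pm$ are admissible test functions (which uses the $L^2_t H^2_x\cap H^1_t L^2_x$ regularity just derived) and that the transport term $u\cdot\nabla w$, tested against $w_+$, integrates by parts cleanly under the Neumann boundary condition. Both require the full strength of the $W^{1,\infty}$-regularity of $u$, without which the exponential pointwise bounds in \eqref{bounds-on-rho} could not be recovered.
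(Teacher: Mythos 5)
The paper offers no proof of this statement: it is quoted directly from Novotn\'y--Stra\v skraba \cite[Proposition 7.39]{MR2084891}, and your construction (Galerkin in the Neumann eigenbasis, $L^2$/$H^1$ energy estimates for the regularity class, Stampacchia truncation against $M(t)=\overline\rho\exp(\int_0^t\|\operatorname{div}u\|_{L^\infty})$ and its lower counterpart for \eqref{bounds-on-rho}, Gronwall for uniqueness) is essentially the standard argument used there. One small inaccuracy: the boundary term $\tfrac12\int_{\partial\Omega}(u\cdot\nu)w_+^2$ produced by integrating $u\cdot\nabla w$ against $w_+$ is not removed by the Neumann condition (which controls $\partial_\nu\rho$, not $u\cdot\nu$); it vanishes because in the application $u\in X_N$ satisfies $u\cdot\nu=0$ on $\partial\Omega$, or else must be absorbed into $\varepsilon\|\nabla w_+\|_{L^2}^2$ via a trace interpolation estimate.
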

 
\begin{proof}[Proof of \cref{fa}]
The idea is to view our Galerkin approximation as a fixed point problem and then apply Schauder's fixed point theorem to it. We set  
\begin{equation*}
B_{R,T}=\{u\in C([0,T]; X_N),\ \|u\|_{L^{\infty}(0,T;L^2(\Omega))}\leq R\},
\end{equation*}
for $R$ and $T$ positive which will be fixed in Step 3.

\underline{Step 1: Continuity equation and transport of the body.} 
Given $u \in B_{R,T}$, let $\rho$ be the solution to 
\begin{equation}\label{eq:rho} 
\frac{\partial {\rho}}{\partial t} + \operatorname{div}({\rho} u) =\varepsilon \Delta\rho \mbox{ in }\, (0,T)\times \Omega, \quad  \frac{\partial \rho}{\partial \nu}=0 \mbox{ on }\, \partial\Omega, \quad\rho(0)=\rho_0^N,\quad 0<\underline{\rho}\leq \rho_0^N \leq \overline{\rho}, 
\end{equation} 
and let ${\chi}_{\mc{S}}$ satisfy
\begin{equation}\label{eq:chinew} 
\frac{\partial {\chi}_{\mc{S}}}{\partial t} + P_{\mc{S}}u \cdot \nabla \chi_{\mc{S}} =0,\quad  \chi_{\mc{S}}|_{t=0}= \mathds{1}_{\mc{S}_0},
\end{equation}
and
\begin{equation}\label{eq:rhochinew} 
\frac{\partial }{\partial t}(\rho{\chi}_{\mc{S}}) + P_{\mc{S}}u \cdot \nabla (\rho{\chi}_{\mc{S}})=0,\quad  (\rho{\chi}_{\mc{S}})|_{t=0}= \rho_0^{N}\mathds{1}_{\mc{S}_0},
\end{equation}
where $P_{\mc{S}}u \in \mc{R}$ and it is given by
\eqref{projection:P}.

Since $\rho_0^N \in W^{1,\infty}(\Omega)$, $u\in B_{R,T}$ in \eqref{eq:rho}, we can apply \cref{parabolic} to conclude that $\rho >0$ and 
\begin{equation*}
\rho \in L^2(0,T;H^{2}(\Omega)) \cap C([0,T]; H^{1}(\Omega)) \cap H^{1}(0,T;L^{2}(\Omega)).
\end{equation*}
Moreover, by \cref{reg:chiS} we obtain
\begin{align*}
&\chi_{\mc{S}} \in L^{\infty}((0,T)\times \Omega) \cap C([0,T];L^p(\Omega)),  \, \forall \, 1 \leq p < \infty,\\
&\rho\chi_{\mc{S}} \in L^{\infty}((0,T)\times \Omega) \cap C([0,T];L^p(\Omega)),  \, \forall \, 1 \leq p < \infty.
\end{align*}
Consequently, we define 
\begin{align*}
& \mu = (1-\chi_{\mc{S}})\mu_{\mc{F}} + \delta^2\chi_{\mc{S}},\quad  \lambda = (1-\chi_{\mc{S}})\lambda_{\mc{F}} + \delta^2\chi_{\mc{S}}\mbox{ so that }\mu >0,\ 2\mu+3\lambda \geq 0, \\
& g=(1-\chi_{\mc{S}})g_{\mc{F}} + \chi_{\mc{S}}g_{\mc{S}},\quad
p(\rho)= a\rho^{\gamma} + {\delta} \rho^{\beta}\quad
\mbox{ with } \quad
a = a_{\mc{F}} (1-\chi_{\mc{S}}).
\end{align*}
\underline{Step 2: Momentum equation.} Given $u\in B_{R,T}$,  let us consider the following equation satisfied by $\widetilde{u}: [0,T]\mapsto X_N$:
\begin{multline}\label{tilde-momentum}
- \int\limits_0^T\int\limits_{\Omega} \rho \Big(\widetilde{u}'(t)\cdot e_j +  (u \cdot \nabla e_j)\cdot \widetilde{u} \Big) + \int\limits_0^T\int\limits_{\Omega} \Big(2\mu\mathbb{D}(\widetilde{u}):\mathbb{D}(e_j) + \lambda\operatorname{div}\widetilde{u}\mathbb{I} : \mathbb{D}(e_j) - p(\rho)\mathbb{I}:\mathbb{D}(e_j)\Big) \\
+\int\limits_0^T\int\limits_{\Omega} \varepsilon \nabla e_j \nabla \rho \cdot \widetilde{u}
 + \alpha \int\limits_0^T\int\limits_{\partial \Omega} (\widetilde{u} \times \nu)\cdot (e_j \times \nu) + \alpha \int\limits_0^T\int\limits_{\partial \mc{S}^N(t)} [(\widetilde{u}-P_{\mc{S}}\widetilde{u})\times \nu]\cdot [(e_j-P_{\mc{S}}e_j)\times \nu] \\
  + \frac{1}{\delta}\int\limits_0^T\int\limits_{\Omega} \chi_{\mc{S}}(\widetilde{u}-P_{\mc{S}}\widetilde{u})\cdot (e_j-P_{\mc{S}}e_j) = \int\limits_0^T\int\limits_{\Omega}\rho g \cdot e_j,
\end{multline}
where $\rho$, $\chi_{\mc{S}}$ are defined as in Step 1. We can write
\begin{equation*}
\widetilde{u}(t,\cdot)= \sum\limits_{i=1}^N g_{i}(t) e_i, \quad \widetilde{u}(0)=u_{0}^N= \sum\limits_{i=1}^N \left(\int\limits_{\Omega} u_{0} \cdot e_i\right)e_i.
\end{equation*} 
Thus, we can identify the function $\widetilde{u}$ with its coefficients $\{g_{i}\}$ which satisfy the ordinary differential equation,
\begin{equation}\label{tildeu-ODE}
\sum\limits_{i=1}^N a_{i,j}g'_{i}(t) + \sum\limits_{i=1}^N b_{i,j}g_{i}(t) = f_j(t),\quad g_{i}(0)= \int\limits_{\Omega} u_{0}^N \cdot e_i,
\end{equation} 
where $a_{i,j}$, $b_{i,j}$ and $f_j$
are given by
\begin{align*}
a_{i,j} &= \int\limits_0^T\int\limits_{\Omega} \rho e_i e_j, \\
b_{i,j} &= \int\limits_0^T\int\limits_{\Omega} \rho (u\cdot \nabla e_j)\cdot e_i + \int\limits_0^T\int\limits_{\Omega} \Big(2\mu\mathbb{D}(e_i):\mathbb{D}(e_j) + \lambda\operatorname{div}e_i\mathbb{I} : \mathbb{D}(e_j) \Big) + \int\limits_0^T\int\limits_{\Omega} \varepsilon \nabla e_j \nabla \rho \cdot e_i \\
 &+ \alpha \int\limits_0^T\int\limits_{\partial \Omega} (e_i \times \nu)\cdot (e_j \times \nu) + \alpha \int\limits_0^T\int\limits_{\partial \mc{S}(t)} [(e_i-P_{\mc{S}}e_i)\times \nu]\cdot [(e_j-P_{\mc{S}}e_j)\times \nu] + \frac{1}{\delta}\int\limits_0^T\int\limits_{\Omega} \chi_{\mc{S}}(e_i-P_{\mc{S}}e_i)\cdot (e_j-P_{\mc{S}}e_j),\\
  f_j &= \int\limits_0^T\int\limits_{\Omega} \rho g\cdot e_j +  \int\limits_0^T\int\limits_{\Omega}p(\rho)\mathbb{I}:\mathbb{D}(e_j).
\end{align*}
Observe that the positive lower bound of $\rho$ in \cref{parabolic} guarantees the invertibility of the matrix $(a_{i,j}(t))_{1\leq i,j\leq N}$. We use the regularity of $\rho$ (\cref{parabolic}), of $\chi_{\mc{S}}$ and of the propagator associated to $P_{\mc{S}}u$  (\cref{reg:chiS}) to conclude the continuity of $(a_{i,j}(t))_{1\leq i,j\leq N}$, $(b_{i,j}(t))_{1\leq i,j\leq N}$, $(f_{i}(t))_{1\leq i \leq N}$. The existence and uniqueness theorem for ordinary differential equations gives that system \eqref{tildeu-ODE} has a unique solution defined on $[0,T]$
and therefore equation \eqref{tilde-momentum} has a unique solution 
\begin{equation*}
\widetilde{u} \in C([0,T]; X_N).
\end{equation*}

\underline{Step 3: Well-definedness of $\mc{N}$.}  
Let us define a map   
\begin{align*}
\mc{N}:  B_{R,T} &\rightarrow C([0,T],X_N) \\
  u &\mapsto \widetilde{u},
\end{align*}
where $\widetilde{u}$ satisfies \eqref{tilde-momentum}. 
Since we know the existence of $\widetilde{u} \in C([0,T]; X_N)$ to the problem \eqref{tilde-momentum}, we have that $\mc{N}$ is well-defined from $B_{R,T}$ to $C([0,T]; X_N)$. Now we establish the fact that $\mc{N}$ maps $B_{R,T}$ to itself
for suitable $R$ and $T$.

 We fix 
\begin{equation*}
0< \sigma < \frac{1}{2}\operatorname{dist}(\mc{S}_0,\partial \Omega).
\end{equation*}
Given $u\in B_{R,T}$, we want to estimate $\|\widetilde{u}\|_{L^{\infty}(0,T;L^2(\Omega))}$.
We have the following identities via a simple integration by parts:
\begin{align}\label{id:1}
\int\limits_0^t\int\limits_{\Omega} \rho \widetilde{u}'\cdot \widetilde{u} &=-\frac{1}{2}\int\limits_0^t\int\limits_{\Omega}\frac{\partial \rho}{\partial t}|\widetilde{u}|^2 + \frac{1}{2}(\rho|\widetilde{u}|^2)(t)-\frac{1}{2}\rho_0|u_0|^2, \\
\label{id:2}
\int\limits_0^T\int\limits_{\Omega} \rho (u\cdot\nabla)\widetilde{u}\cdot \widetilde{u} &= - \frac{1}{2}\int\limits_0^T\int\limits_{\Omega} \operatorname{div}(\rho u)|\widetilde{u}|^2,\\
\begin{split}\label{id:3}
\int\limits_{\Omega} \nabla (\rho^{\gamma})\cdot \widetilde{u} &= \frac{\gamma}{\gamma-1} \int\limits_{\Omega} \nabla (\rho^{\gamma-1})\cdot \rho \widetilde{u} = -\frac{\gamma}{\gamma-1}\int\limits_{\Omega}\rho^{\gamma-1} \operatorname{div}(\rho \widetilde{u})=\frac{1}{\gamma-1} \frac{d}{dt}\int\limits_{\Omega} \rho^{\gamma} - \frac{\varepsilon\gamma}{\gamma-1}\int\limits_{\Omega} \rho^{\gamma-1}\Delta\rho \\ 
&= \frac{1}{\gamma-1} \frac{d}{dt}\int\limits_{\Omega} \rho^{\gamma} + \varepsilon \gamma\int\limits_{\Omega} \rho^{\gamma-2}|\nabla \rho|^2 \geq \frac{1}{\gamma-1} \frac{d}{dt}\int\limits_{\Omega} \rho^{\gamma}.
\end{split}
\end{align}
Similarly,
\begin{equation}\label{nid:4}
\int\limits_{\Omega} \nabla (\rho^{\beta})\cdot \widetilde{u} = \frac{1}{\beta-1} \frac{d}{dt}\int\limits_{\Omega} \rho^{\beta} + \varepsilon \beta\int\limits_{\Omega} \rho^{\beta-2}|\nabla \rho|^2.
\end{equation}
We multiply equation \eqref{tilde-momentum} by $g_{j}$, add these equations for $j=1,2,...,N$, use the relations \eqref{id:1}--\eqref{nid:4} and the continuity equation \eqref{eq:rho} to obtain the following energy estimate:
\begin{multline}\label{energy:tildeu}
\int\limits_{\Omega}\Big(\frac{1}{2} \rho |\widetilde{u}|^2 + \frac{a}{\gamma-1}\rho^{\gamma} + \frac{\delta}{\beta-1}\rho^{\beta}\Big) + \int\limits_0^T\int\limits_{\Omega} \Big(2\mu|\mathbb{D}(\widetilde{u})|^2 + \lambda |\operatorname{div}\widetilde{u}|^2\Big)  + \delta\varepsilon \beta\int\limits_0^T\int\limits_{\Omega} \rho^{\beta-2}|\nabla \rho|^2
 + \alpha \int\limits_0^T\int\limits_{\partial \Omega} |\widetilde{u} \times \nu|^2 \\
 + \alpha \int\limits_0^T\int\limits_{\partial \mc{S}(t)} |(\widetilde{u}-P_{\mc{S}}\widetilde{u})\times \nu|^2 
  + \frac{1}{\delta}\int\limits_0^T\int\limits_{\Omega} \chi_{\mc{S}}|\widetilde{u}-P_{\mc{S}}\widetilde{u}|^2 \leq \int\limits_0^T\int\limits_{\Omega}\rho g \cdot \widetilde{u}
 + \int\limits_{\Omega} \Bigg( \frac{1}{2}\frac{\rho_0^N}{|q_0^N|^2}\mathds{1}_{\{\rho_0>0\}}  + \frac{a}{\gamma-1}(\rho_0^N)^{\gamma} + \frac{\delta}{\beta-1}(\rho_0^N)^{\beta} \Bigg)\\
 \leq \sqrt{\overline{\rho}}T\left(\frac{1}{2\widetilde{\varepsilon}}\|g\|^2_{L^{\infty}(0,T;L^2(\Omega))} + \frac{\widetilde{\varepsilon}}{2}\|\sqrt{\rho}\widetilde{u}\|^2_{L^{\infty}(0,T;L^2(\Omega))}\right) + \int\limits_{\Omega} \Bigg( \frac{1}{2}\frac{\rho_0^N}{|q_0^N|^2}\mathds{1}_{\{\rho_0>0\}}  + \frac{a}{\gamma-1}(\rho_0^N)^{\gamma} + \frac{\delta}{\beta-1}(\rho_0^N)^{\beta} \Bigg).
\end{multline}
  An appropriate choice of $\widetilde{\varepsilon}$ in \eqref{energy:tildeu} gives us 
 \begin{equation*}
\|\widetilde{u}\|^2_{L^{\infty}(0,T;L^2(\Omega))} \leq \frac{4{\overline{\rho}}}{\underline{\rho}}T^2\|g\|^2_{L^{\infty}(0,T;L^2(\Omega))} + \frac{4}{\underline{\rho}}E_0^N,
\end{equation*}
where $\overline{\rho}$ and $\underline{\rho}$ are the upper and lower bounds of $\rho$. In order to get $\|\widetilde{u}\|_{L^{\infty}(0,T;L^2(\Omega))} \leq R$,
  we need
 \begin{equation}\label{choice-R}
 R^2 \geq \frac{4{\overline{\rho}}}{\underline{\rho}}T^2\|g\|^2_{L^{\infty}(0,T;L^2(\Omega))} + \frac{4}{\underline{\rho}}E_0^N.
 \end{equation}
We also need to verify that for $T$ small enough and for any $u\in B_{R,T}$,  
\begin{equation}\label{no-collision}
\inf_{u\in B_{R,T}} \operatorname{dist}(\mc{S}(t),\partial \Omega) \geq 2\sigma> 0
\end{equation}
holds. We follow \cite[Proposition 4.6, Step 2]{MR3272367} and write $\mc{S}(t)=\eta_{t,0}(\mc{S}_0)$ with the isometric propagator  $\eta_{t,s}$ associated to the rigid field $P_{\mc{S}}u=h'(t) + \omega(t)\times (y-h(t))$. Then, proving \eqref{no-collision} is equivalent to establishing the following bound:
\begin{equation}\label{equivalent-T}
\sup_{t\in [0,T]}|\partial_t \eta_{t,0}(t,y)| < \frac{ \operatorname{dist}(\mc{S}_0,\partial\Omega) - 2\sigma}{T},\quad t\in [0,T],\, y\in \mc{S}_0.
\end{equation}
We have 
\begin{equation*}
|\partial_t \eta_{t,0}(t,y)|=|P_{\mc{S}}u(t, \eta_{t,0}(t,y))| \leq |h'(t)| + |\omega(t)||y-h(t)|.
\end{equation*}
Furthermore, if $\overline{\rho}$ is the upper bound of $\rho$, then for $u\in B_{R,T}$
\begin{equation}\label{18:37}
|h'(t)|^2 + J(t)\omega(t)\cdot \omega(t)= \int\limits_{\mc{S}(t)} \rho|P_{\mc{S}}u(t,\cdot)|^2 \leq \int\limits_{\Omega} \rho|u(t,\cdot)|^2 \leq \overline{\rho}R^2
\end{equation}
 for any $R$ and $t\in (0,T)$. As $J(t)$ is congruent to $J(0)$, they have the same eigenvalues and we have 
 \begin{equation*}
 \lambda_0|\omega(t)|^2 \leq J(t)\omega(t)\cdot \omega(t),
 \end{equation*}
 where $\lambda_0$ is the smallest eigenvalue of $J(0)$. Observe that for $t\in [0,T],\, y\in \mc{S}_0$, 
 \begin{align}
 \begin{split}\label{18:39}
 |h'(t)| + |\omega(t)||y-h(t)|&\leq \sqrt{2}(|h'(t)|^2 + |\omega(t)|^2|y-h(t)|^2)^{1/2} \leq \sqrt{2}\max\{1,|y-h(t)|\}(|h'(t)|^2 + |\omega(t)|^2)^{1/2}
 \\ &\leq C_0\left(|h'(t)|^2 + J(t)\omega(t)\cdot \omega(t)\right)^{1/2},
 \end{split}
 \end{align}
 where $C_0=\sqrt{2}\frac{\max\{1,|y-h(t)|\}}{\min\{1,\lambda_0\}
 ^{1/2}}$.
 Thus, with the help of \eqref{18:37}--\eqref{18:39} and the relation of $R$ in \eqref{choice-R}, we can conclude that any 
 \begin{equation}\label{choice-T}
 T < \frac{ \operatorname{dist}(\mc{S}_0,\partial\Omega) - 2\sigma}{C_0 |\overline{\rho}|^{1/2}[\frac{4{\overline{\rho}}}{\underline{\rho}}T^2\|g\|^2_{L^{\infty}(0,T;L^2(\Omega))} + \frac{4}{\underline{\rho}}E_0^N]^{1/2}},
 \end{equation}
 satisfies the relation \eqref{no-collision}.
Thus, we choose $T$  satisfying \eqref{choice-T} and fix it. Then we choose $R$ as in \eqref{choice-R} to conclude that $\mc{N}$ maps $B_{R,T}$ to itself.

\underline{Step 4: Continuity of $\mc{N}$.}  We show that if a sequence $\{u^k\} \subset B_{R,T}$ is such that $u^k \rightarrow u$ in $B_{R,T}$, then $\mc{N}(u^k) \rightarrow \mc{N}(u)$ in $B_{R,T}$. As $\mbox{span}(e_1,e_2,...,e_N)$ is a finite dimensional subspace of $\mc{D}(\overline{\Omega})$, we have $u^k \rightarrow u$ in $C([0,T];\mc{D}(\overline{\Omega}))$. Given $\{u^k\} \subset B_{R,T}$, we have that $\rho^k \in L^2(0,T;H^{2}(\Omega)) \cap C([0,T]; H^{1}(\Omega)) \cap H^{1}(0,T;L^{2}(\Omega))$ is the solution to 
\eqref{eq:rho}, $\chi_{\mc{S}}^k \mbox{ is bounded in }L^{\infty}((0,T)\times \mb{R}^3)\mbox{ satisfying }$ \eqref{eq:chinew}
and $\{\rho^{k}\chi_{\mc{S}}^{k}\}$ is a bounded sequence in $L^{\infty}((0,T)\times\mathbb{R}^3)$ satisfying
\eqref{eq:rhochinew}.
We apply \cref{sequential1} to obtain 
\begin{align*}
& \chi_{\mc{S}}^k \rightarrow  \chi_{\mc{S}} \mbox{ weakly-}* \mbox{ in }L^{\infty}((0,T)\times \mb{R}^3) \mbox{ and }\mbox{ strongly } \mbox{ in }C([0,T]; L^p_{loc}(\mb{R}^3)),  \, \forall \, 1 \leq p < \infty,\\
& P_{\mc{S}}^k u^k \rightarrow P_{\mc{S}} u \mbox{ strongly } \mbox{ in }C([0,T]; C^{\infty}_{loc}(\mb{R}^3)),\\
& \eta_{t,s}^k \rightarrow \eta_{t,s} \mbox{ strongly } \mbox{ in }C^{1}([0,T]^2; C^{\infty}_{loc}(\mb{R}^3)).
\end{align*} 
We use the continuity argument as in Step 2 to conclude 
\begin{equation*}
a^k_{i,j}\rightarrow a_{i,j}, \quad b^k_{i,j}\rightarrow b_{i,j}, \quad f^k_j \rightarrow f_j \mbox{ strongly in }C([0,T]),
\end{equation*}
and so we obtain 
\begin{equation*}
\mc{N}(u^k)=\widetilde{u}^k \rightarrow \widetilde{u}=\mc{N}(u)\mbox{ strongly in }C([0,T]; X_N). 
\end{equation*}

\underline{Step 5: Compactness of $\mc{N}$.}  If $\widetilde{u}(t)=\sum\limits_{i=1}^N g_i(t) e_i$, we can view \eqref{tildeu-ODE} as
\begin{equation*}
A(t)G'(t) + B(t)G(t) = F(t),
\end{equation*}
where $A(t)=(a_{i,j}(t))_{1\leq i,j\leq N},\quad B(t)=(b_{i,j}(t))_{1\leq i,j\leq N},\quad F(t)=(f_{i}(t))_{1\leq i \leq N},\quad G(t)=(g_i(t))_{1\leq i\leq N}$. We deduce
\begin{equation*}
|g'_i(t)| \leq R|A^{-1}(t)||B(t)| + |A^{-1}(t)||F(t)|.
\end{equation*}
Thus, we have 
\begin{equation*}
\sup_{t\in [0,T]} \Big(|g_i(t)| + |g'_i(t)|\Big) \leq C.
\end{equation*}
This also implies 
\begin{equation*}
\sup_{u\in B_{R,T}} \|\mc{N}(u)\|_{C^1([0,T]; X_N)} \leq C.
\end{equation*}
The $C^1([0,T]; X_N)$-boundedness of $\mc{N}(u)$ allows us to apply the Arzela-Ascoli theorem to obtain compactness of $\mc{N}$ in $B_{R,T}$.

Now we are in a position to apply Schauder's fixed point theorem to
$\mc{N}$ to conclude the existence of a fixed point $u^N \in B_{R,T}$. Then we define $\rho^N$ satisfying the continuity equation \eqref{galerkin-approx2} on $(0,T)\times \Omega$, and $\chi_{\mc{S}}^N=\mathds{1}_{\mc{S}^N}$ is the corresponding solution to the transport equation \eqref{galerkin-approx4} on $(0,T)\times \mathbb{R}^3$. It only remains to justify the momentum equation \eqref{galerkin-approx3}.  We multiply equation \eqref{tilde-momentum} by $\psi\in \mc{D}([0,T))$ to obtain:
\begin{multline}\label{22:49}
- \int\limits_0^T\int\limits_{\Omega} \rho^N \Big((u^N)'(t)\cdot \psi(t)e_j +  (u^N \cdot \nabla (\psi(t)e_j))\cdot {u}^N \Big)+\int\limits_0^T\int\limits_{\Omega} \varepsilon \nabla (\psi(t)e_j) \nabla \rho^N \cdot u^N
 + \alpha \int\limits_0^T\int\limits_{\partial \Omega} (u^N \times \nu)\cdot (\psi(t)e_j \times \nu)\\ + \int\limits_0^T\int\limits_{\Omega} \Big(2\mu^N\mathbb{D}({u}^N):\mathbb{D}(\psi(t)e_j) + \lambda^N\operatorname{div}{u}^N\mathbb{I} : \mathbb{D}(\psi(t)e_j) - p^{N}(\rho^N)\mathbb{I}:\mathbb{D}(\psi (t)e_j)\Big) \\
 + \alpha \int\limits_0^T\int\limits_{\partial \mc{S}^N(t)} [({u}^N-P^N_{\mc{S}}{u}^N)\times \nu]\cdot [(\psi(t)e_j-P^N_{\mc{S}}\psi(t)e_j)\times \nu] 
  + \frac{1}{\delta}\int\limits_0^T\int\limits_{\Omega} \chi_{\mc{S}}({u}^N-P^N_{\mc{S}}{u}^N)\cdot (\psi(t)e_j-P^N_{\mc{S}}\psi(t)e_j)\\ = \int\limits_0^T\int\limits_{\Omega}\rho^N g^N \cdot \psi(t)e_j,
\end{multline}
We have the following identities via integration by parts:
\begin{equation}\label{id:4}
\int\limits_0^T \rho^N (u^N)'(t)\cdot \psi(t)e_j = -\int\limits_0^T (\rho^N)' u^N\cdot \psi(t)e_j - \int\limits_0^T \rho^N u^N\cdot \psi'(t)e_j - (\rho^N u^N\cdot \psi e_j)(0),
\end{equation}
\begin{equation}\label{id:5}
\int\limits_{\Omega} \rho^N (u^N \cdot \nabla (\psi(t)e_j))\cdot {u}^N = -\int\limits_{\Omega} \operatorname{div}(\rho^N u^N) (\psi(t)e_j \cdot {u}^N) - \int\limits_{\Omega}{ \rho^N (u^N \cdot \nabla ) {u}^N\cdot \psi(t)e_j.}
\end{equation}
Thus we can use the relations \eqref{id:4}--\eqref{id:5} and continuity equation \eqref{galerkin-approx2} in the identity \eqref{22:49} to obtain equation \eqref{galerkin-approx3} for all $\phi \in \mc{D}([0,T); X_N)$.
\end{proof}
\subsection{Convergence of the Faedo-Galerkin scheme and the limiting system}\label{14:14}
In \cref{fa}, we have already constructed a solution $(\mc{S}^N,\rho^N,u^{N})$ to the problem \eqref{galerkin-approx1}--\eqref{galerkin-initial}. In this section, we establish \cref{thm:approxn} by passing to the limit in \eqref{galerkin-approx1}--\eqref{galerkin-initial} as $N\rightarrow\infty$ to recover the solution of \eqref{varepsilon:approx1}--\eqref{varepsilon:initial}, i.e.\ of the $\varepsilon$-level approximation. 
\begin{proof} [Proof of \cref{thm:approxn}]

 If we multiply \eqref{galerkin-approx3} by $u^N$, then as in \eqref{energy:tildeu}, we derive
\begin{multline}\label{energy:uN}
E^N[\rho ^N, q^N]  + \int\limits_0^T\int\limits_{\Omega} \Big(2\mu^N|\mathbb{D}(u^N)|^2 + \lambda^N |\operatorname{div}u^N|^2\Big)  + \delta\varepsilon \beta\int\limits_0^T\int\limits_{\Omega} (\rho^N)^{\beta-2}|\nabla \rho^N|^2
 + \alpha \int\limits_0^T\int\limits_{\partial \Omega} |u^N \times \nu|^2 
 \\+ \alpha \int\limits_0^T\int\limits_{\partial \mc{S}^N(t)} |(u^N-P^N_{\mc{S}}u^N)\times \nu|^2 
  + \frac{1}{\delta}\int\limits_0^T\int\limits_{\Omega} \chi^N_{\mc{S}}|u^N-P^N_{\mc{S}}u^N|^2 \leq \int\limits_0^T\int\limits_{\Omega}\rho^N g^N \cdot u^N 
 + E^N_0,
\end{multline}
where
 $$E^N[\rho ^N, q^N] = \int\limits_{\Omega}\Big(\frac{1}{2} \rho^N |u^N|^2 + \frac{a^N}{\gamma-1}(\rho^N)^{\gamma} + \frac{\delta}{\beta-1}(\rho^N)^{\beta}\Big).$$
Following the idea of the footnote in \cite[Page 368]{MR2084891}, the initial data $(\rho_0^N, u_0^N)$ is constructed in such a way that 
\begin{equation*}
\rho_0^N \rightarrow \rho_0^{\varepsilon} \mbox{ in }W^{1,\infty}(\Omega),\quad \rho_0^N u_0^N \rightarrow q_0^{\varepsilon} \mbox{ in }L^{2}(\Omega)
\end{equation*}
and 
\begin{equation} \label{lim}
\int\limits_{\Omega}\Bigg( \frac{1}{2}{\rho_0^N}|u_0^N|^2\mathds{1}_{\{\rho_0^N>0\}} + \frac{a^N}{\gamma-1}(\rho_0^N)^{\gamma} + \frac{\delta}{\beta-1}(\rho_0^N)^{\beta} \Bigg) \rightarrow \int\limits_{\Omega}\Bigg(\frac{1}{2} \frac{|q_0^{\varepsilon}|^2}{\rho_0^{\varepsilon}}\mathds{1}_{\{\rho_0^{\varepsilon}>0\}} + \frac{a^{\varepsilon}}{\gamma-1}(\rho_0^{\varepsilon})^{\gamma} + \frac{\delta}{\beta-1}(\rho_0^{\varepsilon})^{\beta} \Bigg)\mbox{ as }N\rightarrow \infty.
\end{equation}
 Precisely, we approximate $q_0^{\varepsilon}$ by a sequence $q_0^N$ satisfying \eqref{initialcond} and such that 
\eqref{lim} is valid. It is sufficient to take 
$ u_{0}^N = P_N(\frac{q_0^{\varepsilon}}{\rho_0^{\varepsilon}})$, where by $P_N$ we denote the orthogonal projection of $L^2(\Omega) \mbox { onto } X_N$.  \cref{fa} is valid with these new initial data. Therefore we can apply the arguments which we will explain below to get Proposition \ref{thm:approxn}.

The construction of $\rho^N$ and \eqref{bounds-on-rho} imply that $\rho^N >0$. Thus the energy estimate \eqref {energy:uN} yields that up to a subsequence 
\begin{enumerate}
\item $u^N\rightarrow u^{\varepsilon}$ weakly-$*$ in $L^{\infty}(0,T;L^2(\Omega))$ and weakly in $L^2(0,T;H^1(\Omega))$,
\item $\rho^N \rightarrow \rho^{\varepsilon}$ weakly-$*$ in $L^{\infty}(0,T; L^{\beta}(\Omega))$,
\item $\nabla\rho^N \rightarrow \nabla\rho^{\varepsilon}$ weakly in $L^{2}((0,T)\times\Omega)$.
\end{enumerate}
We follow the similar analysis as for the fluid case explained in \cite[Section 7.8.1, Page 362]{MR2084891} to conclude that
\begin{itemize}
\item $\rho^N \rightarrow \rho^{\varepsilon}$ in $C([0,T]; L^{\beta}_{weak}(\Omega))$ and $\rho^N \rightarrow \rho^{\varepsilon}$ strongly in $L^p((0,T)\times \Omega)$, $\forall \ 1\leq p< \frac{4}{3}\beta$,
\item $\rho^N u^N \rightarrow \rho^{\varepsilon} u^{\varepsilon}$ weakly in $L^2(0,T; L^{\frac{6\beta}{\beta+6}})$  and   weakly-$*$ in $L^{\infty}(0,T; L^{\frac{2\beta}{\beta+1}})$.  
\end{itemize}
We also know that $\chi_{\mc{S}}^N$ is a bounded sequence in $ L^{\infty}((0,T)\times \mb{R}^3)$ satisfying
\eqref{galerkin-approx4} and $\{\rho^{N}\chi_{\mc{S}}^{N}\}$ is a bounded sequence in $L^{\infty}((0,T)\times\mathbb{R}^3)$ satisfying
\eqref{N:approx5}. We use \cref{sequential11} to conclude 
\begin{align}\label{xi}
\chi_{\mc{S}}^N \rightarrow  \chi_{\mc{S}}^{\varepsilon} \mbox{ weakly-}* \mbox{ in }L^{\infty}((0,T)\times \mb{R}^3) &\mbox{ and }\mbox{ strongly } \mbox{ in } C([0,T]; L^p_{loc}(\mb{R}^3)),\ \forall  \ 1 \leq p <\infty,
\end{align}
with $\chi_{\mc{S}}^{\varepsilon}$ satisfying \eqref{varepsilon:approx4} along with \eqref{varepsilon:approx1}. Thus, we have recovered the transport equation for the body \eqref{varepsilon:approx4}. From \eqref{xi} and the definitions of $g^N$ and $g^{\varepsilon}$ in \eqref{gN} and \eqref{gepsilon}, it follows that
\begin{equation}\label{g}
g^N     \rightarrow g^{\varepsilon} \mbox{ weakly-}* \mbox{ in }L^{\infty}((0,T)\times \mb{R}^3) \mbox{ and }\mbox{ strongly } \mbox{ in }C([0,T]; L^p_{loc}(\mb{R}^3)) \ \forall \ 1 \leq p <\infty.
\end{equation}
These convergence results make it possible to pass to the limit $N\rightarrow \infty$ in \eqref{galerkin-approx2} to achieve \eqref{varepsilon:approx2}. Now we concentrate on the limit of the momentum equation \eqref{galerkin-approx3}. The four most difficult terms are:
\begin{align*}
A^N(t,e_k)&= \int\limits_{\partial \mc{S}^N(t)} [(u^N-P^N_{\mc{S}}u^N)\times \nu]\cdot [(e_k-P^N_{\mc{S}}e_k)\times \nu],\quad
B^N(t,e_k)= \int\limits_{\Omega} \rho^N u^N \otimes u^N : \nabla e_k,\\ C^N(t,e_k)&= \int\limits_{\Omega} \varepsilon \nabla u^N \nabla \rho^N \cdot e_k,\quad
D^N(t,e_k)=  \int\limits_{\Omega}  (\rho^N)^{\beta}\mathbb{I}: \mathbb{D}(e_k),\quad 1\leq k\leq N.
\end{align*}
To analyze the term $A^N(t,e_k)$, we do a change of variables to rewrite it in a fixed domain and use the convergence results from \cref{sequential1} for the projection and the isometric propagator: 
\begin{align*}
&P_{\mc{S}}^N u^N \rightarrow P_{\mc{S}}^{\varepsilon} u^{\varepsilon} \mbox{ weakly-}* \mbox{ in }L^{\infty}(0,T; C^{\infty}_{loc}(\mb{R}^3)),\\
&\eta_{t,s}^N \rightarrow \eta_{t,s}^{\varepsilon} \mbox{ weakly-}* \mbox{ in }W^{1,\infty}((0,T)^2; C^{\infty}_{loc}(\mb{R}^3)).
\end{align*}
We follow a similar analysis as in \cite[Page 2047--2048]{MR3272367} to conclude that $A^N$ converges weakly in $L^1(0,T)$ to  
\begin{equation*}
A(t,e_k)= \int\limits_{\partial \mc{S}^{\varepsilon}(t)} [(u^{\varepsilon}-P^{\varepsilon}_{\mc{S}}u^{\varepsilon})\times \nu]\cdot [(e_k-P^{\varepsilon}_{\mc{S}}e_{k})\times \nu].
\end{equation*}
We proceed as explained in the fluid case \cite[Section 7.8.2, Page 363--365]{MR2084891} to analyze the limiting process for the other terms $B^N(t,e_k)$, $C^N(t,e_k)$, $D^N(t,e_k)$. The limit of $B^N(t,e_k)$ follows from the fact \cite[Equation (7.8.22), Page 364]{MR2084891} that 
\begin{equation}\label{conv:convective}
\rho^N u^N \otimes u^N \rightarrow \rho^{\varepsilon} u^{\varepsilon} \otimes u^{\varepsilon} \mbox{ weakly in }L^2(0,T; L^{\frac{6\beta}{4\beta +3}}(\Omega)).
\end{equation}
To get the limit of $C^N(t,e_k)$, we use \cite[Equation (7.8.26), Page 365]{MR2084891}:
\begin{equation*}
\varepsilon\nabla u^N \nabla \rho^N \rightarrow \varepsilon\nabla u^{\varepsilon} \nabla \rho^{\varepsilon} \mbox{ weakly in }L^2(0,T; L^{\frac{5\beta-3}{4\beta}}(\Omega)),
\end{equation*}
and the limit of $D^N(t,e_k)$ is obtained by using \cite[Equation (7.8.8), Page 362]{MR2084891}:
\begin{equation}\label{conv:rho}
\rho^N \rightarrow \rho^{\varepsilon} \mbox{ strongly in }L^p(0,T; \Omega),\quad 1\leq p < \frac{4}{3}\beta.
\end{equation}
Thus, using the above convergence results for $B^N$, $C^N$, $D^N$ and the fact that
\begin{equation*}
\bigcup_{N}X_N\mbox{ is dense in }\left\{v\in W^{1,p}(\Omega) \mid v\cdot \nu=0\mbox{ on }\partial\Omega\right\}\mbox{ for any }p\in [1,\infty),
\end{equation*}
we conclude the following weak convergences in $L^1(0,T)$: 
\begin{equation*}
B^N(t,\phi^N)\rightarrow B(t,\phi^{\varepsilon})= \int\limits_{\Omega} \rho^{\varepsilon} u^{\varepsilon} \otimes u^{\varepsilon} : \nabla \phi^{\varepsilon},
\end{equation*}
\begin{equation*}
C^N(t,\phi^N)\rightarrow C(t,\phi^{\varepsilon})=\int\limits_{\Omega} \varepsilon \nabla u^{\varepsilon} \nabla \rho^{\varepsilon} \cdot \phi^{\varepsilon},
\end{equation*}
\begin{equation*}
D^N(t,\phi^N)\rightarrow D(t,\phi^{\varepsilon})= \int\limits_{\Omega}  (\rho^{\varepsilon})^{\beta}\mathbb{I}: \mathbb{D}(\phi^{\varepsilon}).
\end{equation*}
Thus we have achieved \eqref{varepsilon:approx2} as a limit of equation \eqref{galerkin-approx3} as $N\rightarrow \infty$. Hence, we have established the existence of a solution $(\mc{S}^{\varepsilon},\rho^{\varepsilon},u^{\varepsilon})$ to system \eqref{varepsilon:approx1}--\eqref{varepsilon:initial}. Now we establish energy inequality \eqref{energy-varepsilon} and estimates independent of $\varepsilon$:
\begin{itemize}
\item Notice that the solution $(\rho^N,u^N)$ of the Galerkin scheme satisfies \eqref{energy:uN} uniformly in $N$.  The convergence of $\rho^N|u^N|^2$ in \eqref{conv:convective} and $\rho^N$ in \eqref{conv:rho} ensures that, up to the extraction of a subsequence,
\begin{equation*}
\int\limits_{\Omega}\Big(\frac{1}{2} \rho^N |u^N|^2 + \frac{a^N}{\gamma-1}(\rho^N)^{\gamma} + \frac{\delta}{\beta-1}(\rho^N)^{\beta}\Big) \rightarrow \int\limits_{\Omega}\Big(\frac{1}{2} \rho^{\varepsilon} |u^{\varepsilon}|^2 + \frac{a^{\varepsilon}}{\gamma-1}(\rho^{\varepsilon})^{\gamma} + \frac{\delta}{\beta-1}(\rho^{\varepsilon})^{\beta}\Big) \mbox{ as }N\rightarrow\infty.
\end{equation*}
\item Due to the weak lower semicontinuity of convex functionals, the weak convergence of $u^N$ in $L^2(0,T;H^1(\Omega))$, the strong convergence of $\chi_{\mc{S}}^N$ in $C([0,T];L^p(\Omega))$ and the strong convergence of $P_{\mc{S}}^N$ in $C([0,T]; C^{\infty}_{loc}(\mb{R}^3))$, we obtain 
\begin{equation}\label{N1}
\int\limits_0^T\int\limits_{\Omega} \Big(2\mu^{\varepsilon}|\mathbb{D}(u^{\varepsilon})|^2 + \lambda^{\varepsilon}|\operatorname{div}u^{\varepsilon}|^2\Big) \leq\liminf_{N\rightarrow \infty}\int\limits_0^T\int\limits_{\Omega} \Big(2\mu^N|\mathbb{D}(u^N)|^2 + \lambda^N |\operatorname{div}u^N|^2\Big),
\end{equation}
\begin{equation}\label{N2}
\int\limits_0^T\int\limits_{\Omega} \chi^{\varepsilon}_{\mc{S}}|u^{\varepsilon}-P^{\delta}_{\mc{S}}u^{\varepsilon}|^2\leq\liminf_{N\rightarrow \infty}\int\limits_0^T\int\limits_{\Omega} \chi^N_{\mc{S}}|u^N-P_{\mc{S}}u^N|^2.
\end{equation}
\item Using the fact that $\nabla\rho^N\rightarrow\nabla\rho$ strongly in $L^2((0,T)\times\Omega)$ (by \cite[Equation (7.8.25), Page 365]{MR2084891}), strong convergence of $\rho^N$ in \eqref{conv:rho} and Fatou's lemma, we have
\begin{equation}\label{N3}
\int\limits_0^T\int\limits_{\Omega} (\rho^{\varepsilon})^{\beta-2}|\nabla \rho^{\varepsilon}|^2 \leq \liminf_{N\rightarrow \infty}\int\limits_0^T\int\limits_{\Omega} (\rho^N)^{\beta-2}|\nabla \rho^N|^2.
\end{equation}
\item For passing to the limit in the boundary terms, we follow the idea of \cite{MR3272367}. Define the extended velocities $U^N$, $U_{\mc{S}}^N$ to whole $\mathbb{R}^3$ associated with $u^N$, $P_{\mc{S}}u^N$ respectively. According to \cite[Lemma A.2]{MR3272367}, we have the weak convergences of $U^N$, $U_{\mc{S}}^N$ to $U^{\varepsilon}$, $U_{\mc{S}}^{\varepsilon}$ in $L^2(0,T;H^1_{loc}(\mathbb{R}^3))$. These facts along with the lower semicontinuity of the $L^2$-norm yield
\begin{align}
\int\limits_0^T\int\limits_{\partial \mc{S}^{\varepsilon}(t)} |(u^{\varepsilon}-P^{\varepsilon}_{\mc{S}}u^{\varepsilon})\times \nu|^2 &=\int\limits_0^T\int\limits_{\partial\mc{S}_0} |(U^{\varepsilon}-U_{\mc{S}}^{\varepsilon})\times \nu|^2\notag\\ &\leq \liminf_{N\rightarrow\infty} \int\limits_0^T\int\limits_{\partial\mc{S}_0} |(U^{N}-U_{\mc{S}}^{N})\times \nu|^2 \leq \liminf_{N\rightarrow\infty} \int\limits_0^T\int\limits_{\partial \mc{S}^N(t)} |(u^N-P_{\mc{S}}u^N)\times \nu|^2\label{N4}.
\end{align}
Similar arguments also help us to obtain
\begin{equation}\label{N5}
\int\limits_0^T\int\limits_{\partial \Omega} |u^{\varepsilon} \times \nu|^2 \leq \liminf_{N\rightarrow\infty}\int\limits_0^T\int\limits_{\partial \Omega} |u^N \times \nu|^2.
\end{equation}
\item Regarding the term on the right hand side of \eqref{energy:uN}, the weak convergence of $u^N$ in $L^2(0,T;H^1(\Omega))$, the strong convergence of $\rho^N$ in \eqref{conv:rho} and the strong convergence of $g^N$ in \eqref{g}  yield 
\begin{equation}\label{N6}
\int\limits_0^T\int\limits_{\Omega}  \rho^N g^N \cdot u^N \rightarrow\int\limits_0^T\int\limits_{\Omega} \rho^{\varepsilon} g^{\varepsilon} \cdot u^{\varepsilon}, \; 
\mbox{ as }N\rightarrow\infty.
\end{equation}
\end{itemize}
Thus, we have established energy inequality \eqref{energy-varepsilon}:
\begin{multline}\label{re:epsilon-energy}
E^{\varepsilon}[\rho ^{\varepsilon},q^{\varepsilon}]+ \int\limits_0^T\int\limits_{\Omega} \Big(2\mu^{\varepsilon}|\mathbb{D}(u^{\varepsilon})|^2 + \lambda^{\varepsilon}|\operatorname{div}u^{\varepsilon}|^2\Big) + \delta\varepsilon \beta\int\limits_0^T\int\limits_{\Omega} (\rho^{\varepsilon})^{\beta-2}|\nabla \rho^{\varepsilon}|^2 \\
 + \alpha \int\limits_0^T\int\limits_{\partial \Omega} |u^{\varepsilon} \times \nu|^2 
 + \alpha \int\limits_0^T\int\limits_{\partial \mc{S}^{\varepsilon}(t)} |(u^{\varepsilon}-P^{\varepsilon}_{\mc{S}}u^{\varepsilon})\times \nu|^2 
  + \frac{1}{\delta}\int\limits_0^T\int\limits_{\Omega} \chi^{\varepsilon}_{\mc{S}}|u^{\varepsilon}-P^{\delta}_{\mc{S}}u^{\varepsilon}|^2 \leq \int\limits_0^T\int\limits_{\Omega}\rho^{\varepsilon} 
  g^{\varepsilon} \cdot u^{\varepsilon}
  +   E^{\varepsilon}_0,
\end{multline}
where
 $$E^{\varepsilon}[\rho^{\varepsilon} ,q^{\varepsilon}] =\int\limits_{\Omega}\left(\frac{1}{2}\frac{|q^{\varepsilon}|^2}{\rho^{\varepsilon}} + \frac{a^{\varepsilon}}{\gamma-1}(\rho^{\varepsilon})^{\gamma} + \frac{\delta}{\beta-1}(\rho^{\varepsilon})^{\beta}\right).$$
We obtain as in \cite[Equation (7.8.14), Page 363]{MR2084891}:
\begin{equation*}
\partial_t\rho^{\varepsilon},\  \Delta \rho^{\varepsilon}\in {L^{\frac{5\beta-3}{4\beta}}((0,T)\times\Omega)}.
\end{equation*}
Regarding the $\sqrt{\varepsilon} \|\nabla \rho^{\varepsilon}\|_{L^2((0,T)\times\Omega)}$ estimate in \eqref{est:indofepsilon}, we have to multiply \eqref{varepsilon:approx2} by $\rho^{\varepsilon}$ and integrate by parts to obtain
\begin{equation*}
\frac{1}{2}\int\limits_{\Omega} |\rho^{\varepsilon}(t)|^2 + \varepsilon\int\limits_0^T\int\limits_{\Omega} |\nabla\rho^{\varepsilon}(t)|^2 = \frac{1}{2}\int\limits_{\Omega} |\rho_0^{\varepsilon}|^2 - \frac{1}{2}\int\limits_0^T\int\limits_{\Omega} |\rho^{\varepsilon}|^2\operatorname{div} u^{\varepsilon} \leq \frac{1}{2}\int\limits_{\Omega} |\rho_0^{\varepsilon}|^2 + \sqrt{T}\||\rho^{\varepsilon}\|^2_{L^{\infty}(0,T;L^4(\Omega))}\|\operatorname{div}u^{\varepsilon}\|_{L^2(0,T;L^2(\Omega))}.
\end{equation*}
Now, the pressure estimates $\|\rho^{\varepsilon}\|_{L^{\beta+1}((0,T)\times\Omega)}$ and $\|\rho^{\varepsilon}\|_{L^{\gamma+1}((0,T)\times\Omega)}$ in \eqref{est:indofepsilon} can be derived by means of the test function $\phi(t,x) = \psi(t)\Phi(t,x)$ with  $\Phi(t,x)=\mc{B}[ \rho^{\varepsilon}-\overline{m}]$ in \eqref{varepsilon:approx3}, where 
\begin{equation*}
\psi \in \mc{D}(0,T),\quad \overline{m}=|\Omega|^{-1}\int\limits_{\Omega} \rho^{\varepsilon},
\end{equation*}
and $\mc{B}$ is the Bogovskii operator related to $\Omega$ (for details about $\mc{B}$, see \cite[Section 3.3, Page 165]{MR2084891}). After taking this special test function and integrating by parts, we obtain
\begin{multline}\label{bogovski:mom}
\int\limits_0^T \psi\int\limits_{\Omega}\Big(a^{\varepsilon}(\rho^{\varepsilon})^{\gamma} + {\delta} (\rho^{\varepsilon})^{\beta}\Big) \rho^{\varepsilon}= \int\limits_0^T \psi\int\limits_{\Omega}\Big(a^{\varepsilon}(\rho^{\varepsilon})^{\gamma} + {\delta} (\rho^{\varepsilon})^{\beta}\Big) \overline{m} + \int\limits_0^T 2\psi\int\limits_{\Omega} \mu^{{\varepsilon}}\mathbb{D}(u^{\varepsilon}):\mathbb{D}(\Phi) + \int\limits_0^T \psi\int\limits_{\Omega}\lambda^{\varepsilon}\rho^{\varepsilon}\operatorname{div}u^{\varepsilon}\\- \overline{m}\int\limits_0^T \psi\int\limits_{\Omega}\lambda^{\varepsilon}\operatorname{div}u^{\varepsilon} + \int\limits_0^T \psi\int\limits_{\Omega} \varepsilon \nabla u^{\varepsilon} \nabla \rho^{\varepsilon} \cdot \Phi + \alpha \int\limits_0^T \psi\int\limits_{\partial \mc{S}^{\varepsilon}(t)} [(u^{\varepsilon}-P^{\varepsilon}_{\mc{S}}u^{\varepsilon})\times \nu]\cdot [(\Phi-P^{\varepsilon}_{\mc{S}}\Phi)\times \nu] \\
+ \frac{1}{\delta}\int\limits_0^T \psi\int\limits_{\Omega} \chi^{\varepsilon}_{\mc{S}}(u^{\varepsilon}-P^{\varepsilon}_{\mc{S}}u^{\varepsilon})\cdot (\Phi-P^{\varepsilon}_{\mc{S}}\Phi) + \int\limits_0^T \psi\int\limits_{\Omega} \rho^{\varepsilon} g^{\varepsilon} \cdot \Phi.
\end{multline}
We see that all the terms can be estimated as in \cite[Section 7.8.4, Pages 366--368]{MR2084891} except the penalization term. Using H\"{o}lder's inequality and bounds from energy estimate \eqref{energy-varepsilon}, the penalization term can be dealt with in the following way 
\begin{equation}\label{bogovski:extra}
\int\limits_0^T \psi\int\limits_{\Omega} \chi^{\varepsilon}_{\mc{S}}(u^{\varepsilon}-P^{\varepsilon}_{\mc{S}}u^{\varepsilon})\cdot (\Phi-P^{\varepsilon}_{\mc{S}}\Phi) \leq |\psi|_{C[0,T]} \left(\int\limits_0^T\int\limits_{\Omega} \chi^{\varepsilon}_{\mc{S}}|(u^{\varepsilon}-P^{\varepsilon}_{\mc{S}}u^{\varepsilon})|^2\right)^{1/2}\|\Phi\|_{L^2((0,T)\times\Omega)}\leq C |\psi|_{C[0,T]}, 
\end{equation}
where in the last inequality we have used $\|\Phi\|_{L^2(\Omega)}\leq c\|\rho^{\varepsilon}\|_{L^2(\Omega)}$ and the energy inequality \eqref{energy-varepsilon}. Thus, we have an improved regularity of the density and we have established the required estimates of \eqref{est:indofepsilon}. 

The only remaining thing is to check the following fact: there exists $T$ small enough such that if $\operatorname{dist}(\mc{S}_0,\partial \Omega) > 2\sigma$, then
\begin{equation}\label{epsilon-collision}
 \operatorname{dist}(\mc{S}^{\varepsilon}(t),\partial \Omega) \geq 2\sigma> 0 \quad \forall \ t\in [0,T].
\end{equation}
It is equivalent to establishing the following bound:
\begin{equation}\label{equivalent-Tag}
\sup_{t\in [0,T]}|\partial_t \eta_{t,0}(t,y)| < \frac{ \operatorname{dist}(\mc{S}_0,\partial\Omega) - 2\sigma}{T},\quad  y\in \mc{S}_0.
\end{equation}
We show as in Step 3 of the proof of \cref{fa} that (see \eqref{no-collision}--\eqref{18:39}): 
\begin{equation}\label{00:32}
|\partial_t \eta^{\varepsilon}_{t,0}(t,y)| \leq |(h{^\varepsilon})'(t)| + |\omega^{\varepsilon}(t)||y-h^{\varepsilon}(t)|\leq C_0\left(\int\limits_{\Omega} \rho^{\varepsilon} |u^{\varepsilon}(t)|^2\right)^{1/2},
\end{equation}
where $C_0=\sqrt{2}\frac{\max\{1,|y-h(t)|\}}{\min\{1,\lambda_0\}
 ^{1/2}}$. Moreover, the energy estimate  \eqref{re:epsilon-energy} yields
 \begin{equation*}
 \frac{d}{dt}E^{\varepsilon}[\rho ^{\varepsilon},q^{\varepsilon}]+ \int\limits_{\Omega} \Big(2\mu^{\varepsilon}|\mathbb{D}(u^{\varepsilon})|^2 + \lambda^{\varepsilon}|\operatorname{div}u^{\varepsilon}|^2\Big) \leq \int\limits_{\Omega}\rho^{\varepsilon} 
  g^{\varepsilon} \cdot u^{\varepsilon}\\
  \leq E^{\varepsilon}[\rho ^{\varepsilon},q^{\varepsilon}] + \frac{1}{2\gamma_1}\left(\frac{\gamma -1}{2\gamma}\right)^{\gamma_1/\gamma}\|g^{\varepsilon}\|^{2\gamma_1}_{L^{\frac{2\gamma}{\gamma -1}}(\Omega)},
  \end{equation*}
with $\gamma_1=1-\frac{1}{\gamma}$, which implies
 \begin{equation}\label{00:33}
 E^{\varepsilon}[\rho ^{\varepsilon},q^{\varepsilon}] \leq e^{{T}}E^{\varepsilon}_0 + C{T} \|g^{\varepsilon}\|^{2\gamma_1}_{L^{\infty}((0,T)\times\Omega)}.
 \end{equation}
 Thus, with the help of \eqref{equivalent-Tag} and \eqref{00:32}--\eqref{00:33}, we can conclude that for any $T$ satisfying
 \begin{equation*}
 T < \frac{ \operatorname{dist}(\mc{S}_0,\partial\Omega) - 2\sigma}{C_0 \left[e^{{T}}E^{\varepsilon}_0 + C{T} \|g^{\varepsilon}\|^{2\gamma_1}_{L^{\infty}((0,T)\times\Omega)}\right]^{1/2}},
 \end{equation*}
 the relation \eqref{epsilon-collision} holds.
This completes the proof of \cref{thm:approxn}.
\end{proof}
\subsection{Vanishing dissipation in the continuity equation and the limiting system}\label{14:18}
In this section, we prove \cref{thm:approxn-delta} by taking $\varepsilon\rightarrow 0$  in the system \eqref{varepsilon:approx1}--\eqref{varepsilon:initial}. In order to do so, we have to deal with the problem of identifying the pressure corresponding to the limiting density. 
First of all, following the idea of the footnote in \cite[Page 381]{MR2084891}, the initial data $(\rho_0^{\varepsilon}, q_0^{\varepsilon})$ is constructed in such a way that 
\begin{equation*}
\rho_0^{\varepsilon}>0,\quad \rho_0^{\varepsilon} \in W^{1,\infty}(\Omega),\quad \rho_0^{\varepsilon} \rightarrow \rho_0^{\delta} \mbox{ in }L^{\beta}(\Omega),\quad q_0^{\varepsilon} \rightarrow q_0^{\delta} \mbox{ in }L^{\frac{2\beta}{\beta + 1}}(\Omega)
\end{equation*}
and 
\begin{equation*}
\int\limits_{\Omega}\Bigg( \frac{|q_0^{\varepsilon}|^2}{\rho_0^{\varepsilon}}\mathds{1}_{\{\rho_0^{\varepsilon}>0\}} + \frac{a}{\gamma-1}(\rho_0^{\varepsilon})^{\gamma} + \frac{\delta}{\beta-1}(\rho_0^{\varepsilon})^{\beta} \Bigg) \rightarrow \int\limits_{\Omega}\Bigg( \frac{|q_0^{\delta}|^2}{\rho_0^{\delta}}\mathds{1}_{\{\rho_0^{\delta}>0\}} + \frac{a}{\gamma-1}(\rho_0^{\delta})^{\gamma} + \frac{\delta}{\beta-1}(\rho_0^{\delta})^{\beta} \Bigg)\mbox{ as }{\varepsilon}\rightarrow 0.
\end{equation*}
More precisely, let $(\rho^{\delta}_0,q^{\delta}_0)$ satisfy \eqref{rhonot}--\eqref{qnot}; then, following \cite[Section 7.10.7, Page 392]{MR2084891}, we can find $\rho^{\varepsilon}_{0} \in W^{1,\infty}({\Omega)}$, $\rho^{\varepsilon}_0 > 0$ by defining
\begin{equation*}
     \rho^{\varepsilon}_{0}= \mc{K}_{\varepsilon}(\rho^{\delta}_{0}) + \varepsilon,
    \end{equation*}
    where $\mc{K}_{\varepsilon}$ is the standard regularizing operator in the space variable.
   Then our initial density satisfies
    \begin{equation*}
        \begin{array}{l}
   \rho^{\varepsilon}_{0} \to \rho^{\delta}_{0} \mbox{ strongly in } L^{\beta}(\Omega)  .
    \end{array}
\end{equation*}
We define
\begin{align*}
\overline{{q}^{\varepsilon}_0}= \begin{cases} q_{0}^{\delta}\sqrt{\frac{\rho^{\varepsilon}_{0}}{\rho_{0}^{\delta}}} &\mbox { if } \rho^{\delta}_{0} >0,\\
0 \quad \quad \quad \quad \quad &\mbox { if } \rho^{\delta}_{0} =0.
\end{cases}
\end{align*}
From \eqref{qnot}, we know that
\begin{equation*}
\frac{|\overline{{q}^{\varepsilon}_0}|}{\sqrt{\rho^{\varepsilon}_{0}}} \in { L^2(\Omega)}.
\end{equation*}
Due to a density argument, there exists
 $h^{\varepsilon} \in W^{1,\infty}({\Omega})$ such that
\begin{equation*}
    \left\|\frac{q^{\varepsilon}_0}{\sqrt{\rho^{\varepsilon}_{0}}} -h^{\varepsilon} \right\|_{L^2(\Omega)}< \varepsilon.
\end{equation*}
Now, we set $ q^{\varepsilon}_0= h^{\varepsilon}\sqrt{\rho^{\varepsilon}_{0}}$, which implies that
 \begin{equation*}
    q^{\varepsilon}_0 \to q_{0}^{\delta} \mbox { in } L^{\frac{2\beta}{\beta +1}}(\Omega),
\end{equation*}
and 
\begin{equation*}
    E^{\varepsilon}_0  \to E^{\delta}_0. 
\end{equation*}  

\begin{proof} [Proof of \cref{thm:approxn-delta}] The estimates \eqref{energy-varepsilon} and  \eqref{est:indofepsilon} help us to conclude that, up to an extraction of a subsequence, we have 
\begin{align}
 & u^{\varepsilon}\rightarrow u^{\delta}\mbox{ weakly in }L^2(0,T; H^1(\Omega)),\label{conv1}\\
 &\rho^{\varepsilon}\rightarrow \rho^{\delta}\mbox{ weakly in }L^{\beta+1}((0,T)\times \Omega),\mbox{ weakly-}*\mbox{ in } L^{\infty}(0,T;L^{\beta}(\Omega)),\label{conv2}\\
 & (\rho^{\varepsilon})^{\gamma}\rightarrow \overline{ (\rho^{\delta})^{\gamma}}\mbox{ weakly in  }L^{\frac{\beta+1}{\gamma}}((0,T)\times\Omega),\label{conv3}\\
 & (\rho^{\varepsilon})^{\beta}\rightarrow \overline{ (\rho^{\delta})^{\beta}}\mbox{ weakly in } L^{\frac{\beta+1}{\beta}}((0,T)\times\Omega),\label{conv4}\\
 & \varepsilon\nabla\rho^{\varepsilon}\rightarrow 0 \mbox{ strongly in }L^2((0,T)\times \Omega)\label{conv5}
\end{align}
as $\varepsilon\to 0$. Below, we denote by $\left(\rho^{\delta},u^{\delta}, \overline{ (\rho^{\delta})^{\gamma}},\overline{ (\rho^{\delta})^{\beta}}\right)$ also the extended version of the corresponding quantities in $(0,T)\times \mb{R}^3$. 

\underline{Step 1: Limit of the transport equation.}
We obtain from \cref{thm:approxn} that $\rho^{\varepsilon}$ satisfies \eqref{varepsilon:approx2}, $\{u^{\varepsilon},\chi_{\mc{S}}^{\varepsilon}\}$ is a bounded sequence in $L^{2}(0,T; H^1(\Omega)) \times L^{\infty}((0,T)\times \mb{R}^3)$ satisfying \eqref{varepsilon:approx4}
and $\{\rho^{\varepsilon}\chi_{\mc{S}}^{\varepsilon}\}$ is a bounded sequence in $L^{\infty}((0,T)\times\mathbb{R}^3)$ satisfying \eqref{varepsilon:approx5}. Thus, we can use \cref{sequential-varepsilon} to conclude that up to a subsequence:
\begin{align}\label{13:02}
&\chi_{\mc{S}}^{\varepsilon} \rightarrow  \chi_{\mc{S}}^{\delta} \mbox{ weakly-}* \mbox{ in }L^{\infty}((0,T)\times \mb{R}^3) \mbox{ and }\mbox{ strongly } \mbox{ in }C([0,T]; L^p_{loc}(\mb{R}^3)) \ (1 \leq  p < \infty), \\
\label{18:11}
& \rho^{\varepsilon}\chi_{\mc{S}}^{\varepsilon} \rightarrow  \rho^{\delta}\chi_{\mc{S}}^{\delta} \mbox{ weakly-}* \mbox{ in }L^{\infty}((0,T)\times \mb{R}^3) \mbox{ and }\mbox{ strongly } \mbox{ in }C([0,T]; L^p_{loc}(\mb{R}^3)) \ (1 \leq  p < \infty),
\end{align}
with $\chi_{\mc{S}}^{\delta}$ and $\rho^{\delta}\chi_{\mc{S}}^{\delta}$  satisfying \eqref{approx4} and \eqref{approx5} respectively. Moreover,
\begin{equation}\label{13:01}
P_{\mc{S}}^{\varepsilon} u^{\varepsilon} \rightarrow P_{\mc{S}}^{\delta} u^{\delta} \mbox{ weakly } \mbox{in }L^{2}(0,T; C^{\infty}_{loc}(\mb{R}^3)).
\end{equation}
Hence, we have recovered  the regularity of $\chi_{\mc{S}}^{\delta}$ in \eqref{approx1} and the transport equations \eqref{approx4} and \eqref{approx5} as $\varepsilon\rightarrow 0$.

\underline{Step 2: Limit of the continuity and the momentum equation.}
We follow the ideas of \cite[Auxiliary lemma 7.49]{MR2084891} to conclude: if $\rho^{\delta}, u^{\delta}, \overline{ (\rho^{\delta})^{\gamma}}, \overline{ (\rho^{\delta})^{\beta}}$ are defined by \eqref{conv1}--\eqref{conv4}, we have 
\begin{itemize}
\item $(\rho^{\delta},u^{\delta})$ satisfies:
\begin{equation}\label{rho:delta} 
\frac{\partial {\rho}^{\delta}}{\partial t} + \operatorname{div}({\rho}^{\delta} u^{\delta}) =0 \mbox{ in }\mc{D}'([0,T)\times \mb{R}^3).
\end{equation} 
\item For all $\phi \in H^1(0,T; L^{2}(\Omega)) \cap L^r(0,T; W^{1,{r}}(\Omega))$, where $r=\max\left\{\beta+1, \frac{\beta+\theta}{\theta}\right\}$, $\beta \geq \max\{8,\gamma\}$ and $\theta=\frac{2}{3}\gamma -1$ with $\phi\cdot\nu=0$ on $\partial\Omega$ and  $\phi|_{t=T}=0$, the following holds: 
\begin{multline}\label{mom:delta}
- \int\limits_0^T\int\limits_{\Omega} \rho^{\delta} \left(u^{\delta}\cdot \frac{\partial}{\partial t}\phi +  u^{\delta} \otimes u^{\delta} : \nabla \phi\right) + \int\limits_0^T\int\limits_{\Omega} \Big(2\mu^{\delta}\mathbb{D}(u^{\delta}):\mathbb{D}(\phi) + \lambda^{\delta}\operatorname{div}u^{\delta}\mathbb{I} : \mathbb{D}(\phi) -  \left(a^{\delta}\overline{ (\rho^{\delta})^{\gamma}}+\delta \overline{ (\rho^{\delta})^{\beta}}\right)\mathbb{I}: \mathbb{D}(\phi)\Big) \\
 + \alpha \int\limits_0^T\int\limits_{\partial \Omega} (u^{\delta} \times \nu)\cdot (\phi \times \nu) + \alpha \int\limits_0^T\int\limits_{\partial \mc{S}^{\delta}(t)} \left[(u^{\delta}-P^{\delta}_{\mc{S}}u^{\delta})\times \nu\right]\cdot \left[(\phi-P^{\delta}_{\mc{S}}\phi)\times \nu\right] \\
  + \frac{1}{\delta}\int\limits_0^T\int\limits_{\Omega} \chi^{\delta}_{\mc{S}}(u^{\delta}-P^{\delta}_{\mc{S}}u^{\delta})\cdot (\phi-P^{\delta}_{\mc{S}}\phi) = \int\limits_0^T\int\limits_{\Omega}\rho^{\delta} g^{\delta} \cdot \phi
 + \int\limits_{\Omega} (\rho^{\delta} u^{\delta} \cdot \phi)(0).
\end{multline}
\item The couple $(\rho^{\delta},u^{\delta})$ satisfies the identity
\begin{equation}\label{renorm:delta}
\partial_t b(\rho^{\delta}) + \operatorname{div}(b(\rho^{\delta})u^{\delta})+[b'(\rho^{\delta})\rho^{\delta} - b(\rho^{\delta})]\operatorname{div}u^{\delta}=0 \mbox{ in }\mc{D}'([0,T)\times \mb{R}^3),
\end{equation}
 with any $b\in C([0,\infty)) \cap C^1((0,\infty))$ satisfying
\eqref{eq:b}.
\item $\rho^{\delta} \in C([0,T];L^p(\Omega))$, $1\leq p< \beta$.
\end{itemize}
We outline the main lines of the proof of the above mentioned result. We prove \eqref{rho:delta} by passing to the limit $\varepsilon\rightarrow 0$ in equation \eqref{varepsilon:approx2} with the help of the convergence of the density in \eqref{conv2}, \eqref{conv5} and the convergence of the momentum \cite[Section 7.9.1, page 370]{MR2084891}
\begin{align}\label{product1}
\rho^{\varepsilon}u^{\varepsilon}\rightarrow \rho^\delta u^\delta \mbox{ weakly-}* \mbox{ in }L^{\infty}(0,T;L^{\frac{2\beta}{\beta+1}}(\Omega)),\mbox{ weakly in }L^2(0,T;L^{\frac{6\beta}{\beta+6}}(\Omega)).
\end{align}
We obtain identity \eqref{mom:delta}, corresponding to the momentum equation, by passing to the limit in \eqref{varepsilon:approx3}.  To pass to the limit, we use the convergences of the density and  the velocity \eqref{conv1}--\eqref{conv4} and of the transport part \eqref{13:02}--\eqref{13:01}  along with the convergence of the product of the density and the velocity \eqref{product1} and the convergence of the following terms \cite[Section 7.9.1, page 371]{MR2084891}: 
\begin{align*}
&\rho^{\varepsilon}u^{\varepsilon}_i u^{\varepsilon}_j \rightarrow \rho^\delta u^\delta_i u^\delta_j\mbox{ weakly in }L^2(0,T;L^{\frac{6\beta}{4\beta + 3}}(\Omega)), \quad i,j=1,2,3,\\
&\varepsilon (\nabla \rho^{\varepsilon}\cdot \nabla)u^{\varepsilon}\rightarrow 0 \mbox{ weakly in }L^{\frac{5\beta-3}{4\beta}}((0,T)\times\Omega).
\end{align*}
Since, we have already established the continuity equation \eqref{rho:delta} and the function $b\in C([0,\infty)) \cap C^1((0,\infty))$ satisfies
\eqref{eq:b}, the renormalized continuity equation \eqref{renorm:delta} follows from the application of \cite[Lemma 6.9, page 307]{MR2084891}. Moreover, the regularity of the density $\rho^{\delta} \in C([0,T];L^p(\Omega))$, $1\leq p< \beta$ follows from \cite[Lemma 6.15, page 310]{MR2084891} via the appropriate choice of the renormalization function $b$ in \eqref{renorm:delta} and with the help of the regularities of $\rho^\delta\in L^{\infty}(0,T; L^{\beta}_{loc}(\mb{R}
^3)) \cap C([0,T];L^{\beta}_{loc}(\Omega))$, $u^\delta\in L^2(0,T; H^1_{loc}(\mb{R}^3))$. Hence we have established the continuity equation \eqref{approx2} and the renormalized one \eqref{rho:renorm1}.

\underline{Step 3: Limit of the pressure term.}
 In this step, our aim is to identify the term $\left(\overline{ (\rho^{\delta})^{\gamma}}+\delta \overline{ (\rho^{\delta})^{\beta}}\right)$ by showing that $\overline{ (\rho^{\delta})^{\gamma}}=(\rho^{\delta})^{\gamma}$ and $\overline{ (\rho^{\delta})^{\beta}}=(\rho^{\delta})^{\beta}$. To prove this, we need some compactness of $\rho^{\varepsilon}$, which is not available. However, the quantity $(\rho^{\varepsilon})^{\gamma}+\delta  (\rho^{\varepsilon})^{\beta}-(2\mu+\lambda)\rho
^{\varepsilon}\mathrm{div}u^{\varepsilon}$, called ``effective viscous flux", possesses a convergence property that helps us to identify the limit of our required quantity. We have the following weak and weak-$*$ convergences from the boundedness of their corresponding norms \cite[Section 7.9.2, page 373]{MR2084891}:
 \begin{align}
 \rho
^{\varepsilon}\mathrm{div}u^{\varepsilon}\rightarrow \overline{\rho^{\delta}\mathrm{div}u^{\delta}}&\mbox{ weakly in }L^2(0,T;L^{\frac{2\beta}{2+\beta}}(\Omega)),\label{conv6}\\
(\rho^{\varepsilon})^{\gamma+1}\rightarrow \overline{ (\rho^{\delta})^{\gamma+1}}&\mbox{ weakly-}*\mbox{ in } [C((0,T)\times\Omega)]',\label{conv7}\\
(\rho^{\varepsilon})^{\beta+1}\rightarrow \overline{ (\rho^{\delta})^{\beta+1}}&\mbox{ weakly-}*\mbox{ in } [C((0,T)\times\Omega)]'\label{conv8}.
 \end{align}
 We apply the following result regarding the "effective viscous flux" from \cite[Lemma 7.50, page 373]{MR2084891}:
 Let $u^{\delta}$, $\rho^{\delta}$, $\overline{ (\rho^{\delta})^{\gamma}}$, $\overline{ (\rho^{\delta})^{\beta}}$, $\overline{ (\rho^{\delta})^{\gamma+1}}$, $\overline{ (\rho^{\delta})^{\beta+1}}$, $\overline{\rho^{\delta}\mathrm{div}u^{\delta}}$ be defined in \eqref{conv1}--\eqref{conv4}, \eqref{conv6}--\eqref{conv8}. Then we have 
 \begin{align}
 &\overline{ (\rho^{\delta})^{\gamma+1}}\in L^{\frac{\beta+1}{\gamma+1}}((0,T)\times\Omega),\quad \overline{ (\rho^{\delta})^{\beta+1}} \in L^1((0,T)\times \Omega),\label{effective1}\\
 &\overline{ (\rho^{\delta})^{\gamma+1}} + \delta\overline{ (\rho^{\delta})^{\beta+1}} - (2\mu+\lambda)\overline{\rho^{\delta}\mathrm{div}u^{\delta}}=\overline{ (\rho^{\delta})^{\gamma}}\rho^{\delta} + \delta\overline{ (\rho^{\delta})^{\beta}}\rho^{\delta} - (2\mu+\lambda)\rho^{\delta}\mathrm{div}u^{\delta}\mbox{ a.e. in }(0,T)\times\Omega \label{effective2}.
 \end{align}
 Using the above relation \eqref{effective1} and an appropriate choice of the renormalization function in \eqref{renorm:delta}, we deduce the strong convergence of the density as in \cite[Lemma 7.51, page 375]{MR2084891}: Let $\rho^{\delta}$, $\overline{ (\rho^{\delta})^{\gamma}}$, $\overline{ (\rho^{\delta})^{\beta}}$, $\overline{ (\rho^{\delta})^{\gamma+1}}$, $\overline{ (\rho^{\delta})^{\beta+1}}$ be defined in \eqref{conv2}--\eqref{conv4}, \eqref{conv6}--\eqref{conv7}. Then we have
 \begin{equation*}
 \overline{ (\rho^{\delta})^{\gamma}}= (\rho^{\delta})^{\gamma},\quad \overline{ (\rho^{\delta})^{\beta}}= (\rho^{\delta})^{\beta} \mbox{ a.e. in }(0,T)\times\Omega.
 \end{equation*} 
 In particular,
 \begin{equation}\label{strong:rhoepsilon}
 \rho^{\varepsilon}\rightarrow \rho^{\delta}\mbox{ strongly in }L^p((0,T)\times\Omega),\ 1\leq p < \beta+1.
 \end{equation}
 Thus, we have identified the pressure term in equation \eqref{mom:delta}. Hence, we have recovered the momentum equation \eqref{approx3} and we have proved the existence of a  weak solution $(\mc{S}^{\delta},\rho^{\delta},u^{\delta})$ to system \eqref{approx1}--\eqref{approx:initial}. It remains to prove the energy inequality \eqref{10:45} and the improved regularity for the density \eqref{rho:improved}.

 \underline{Step 4: Energy inequality and improved regularity of the density.} Due to the convergences 
 \begin{align*}
&\rho^{\varepsilon}u^{\varepsilon}_i u^{\varepsilon}_j \rightarrow \rho^\delta u^{\delta}_i u^{\delta}_j \mbox{ weakly in }L^2(0,T;L^{\frac{6\beta}{4\beta + 3}}(\Omega)), \quad i,j=1,2,3,\\
&\rho^{\varepsilon}\rightarrow \rho^{\delta}\mbox{ strongly in }L^p((0,T)\times\Omega),\ 1\leq p < \beta+1,
\end{align*}
we have 
\begin{align*}
&\int\limits_{\Omega} \rho^{\varepsilon}|u^{\varepsilon}|^2 \rightarrow \int\limits_{\Omega} \rho^{\delta}|u^{\delta}|^2 \mbox{ weakly in }L^2(0,T),\\
&\int\limits_{\Omega} \left((\rho^{\varepsilon})^{\gamma} +\delta(\rho^{\varepsilon})^{\beta}\right) \rightarrow \int\limits_{\Omega} \left((\rho^{\delta})^{\gamma} +\delta(\rho^{\delta})^{\beta}\right) \black\mbox{ weakly in }L^{\frac{\beta+1}{\beta}}(0,T).
\end{align*} 
In particular,
\begin{equation*}
 \int\limits_{\Omega}\Big( \rho^{\varepsilon} |u^{\varepsilon}|^2 + \frac{a^{\varepsilon}}{\gamma-1}(\rho^{\varepsilon})^{\gamma} + \frac{\delta}{\beta-1}(\rho^{\varepsilon})^{\beta}\Big) \rightarrow \int\limits_{\Omega}\Big( \rho^{\delta} |u^{\delta}|^2 + \frac{a^{\delta}}{\gamma-1}(\rho^{\delta})^{\gamma} + \frac{\delta}{\beta-1}(\rho^{\delta})^{\beta}\Big) \mbox{ as }\varepsilon\rightarrow 0.
\end{equation*}
Due to the weak lower semicontinuity of the corresponding $L^2$ norms, the weak convergence of $u^{\varepsilon}$ in $L^2(0,T;H^1(\Omega))$, the strong convergence of $\rho^{\varepsilon}$ in $L^p((0,T)\times\Omega),\ 1\leq p < \beta+1$, the strong convergence of
$\chi_{\mc{S}}^{\varepsilon}$ in $C([0,T];L^p(\Omega))$ and the strong convergence of
$P_{\mc{S}}^{\varepsilon}$ in $C([0,T]; C^{\infty}_{loc}(\mb{R}^3))$, we follow the idea explained in \eqref{N1}--\eqref{N6} to pass to the limit as $\varepsilon \rightarrow 0$ in the other terms of inequality \eqref{energy-varepsilon} to establish the energy inequality \eqref{10:45}.

To establish the regularity \eqref{rho:improved}, we use an appropriate test function of the type $$\mc{B}\left((\rho^{\delta})^{\theta} - |\Omega|^{-1}\int\limits_{\Omega}(\rho^{\delta})^{\theta}\right)$$ 
in the momentum equation \eqref{approx3}, where  $\mc{B}$ is the Bogovskii operator. The detailed proof is in the lines of \cite[Section 7.9.5, pages 376-381]{MR2084891} and the extra terms can be treated as we have already explained in \eqref{bogovski:mom}--\eqref{bogovski:extra}. Moreover, we follow the same idea as in the proof of \cref{thm:approxn} (precisely, the calculations in \eqref{epsilon-collision}--\eqref{00:33}) to conclude that there exists $T$ small enough such that if $\operatorname{dist}(\mc{S}_0,\partial \Omega) > 2\sigma$, then
\begin{equation}\label{delta-collision}
 \operatorname{dist}(\mc{S}^{\delta}(t),\partial \Omega) \geq 2\sigma> 0 \quad \forall \ t\in [0,T].
\end{equation}
This settles the proof of \cref{thm:approxn-delta}.
\end{proof}
\section{Proof of the main result}\label{S4}
We have already established the existence of a weak solution $(\mc{S}^{\delta},\rho^{\delta},u^{\delta})$ to system \eqref{approx1}--\eqref{approx:initial} in \cref{thm:approxn-delta}. In this section, we study the convergence analysis and the limiting behaviour of the solution as $\delta\rightarrow 0$ and recover a weak solution to system \eqref{mass:comfluid}--\eqref{initial cond:comp}, i.e., we show \cref{{exist:upto collision}}.
\begin{proof} [Proof of \cref{exist:upto collision}]

 \underline{Step 0: Initial data.}
  We consider initial data $\rho_{\mc{F}_0}$, $q_{\mc{F}_0}$, $\rho_{\mc{S}_0}$, $q_{\mc{S}_0}$ satisfying the conditions \eqref{init}--\eqref{init2}. In this step we present the construction of the approximate initial data $(\rho^{\delta}_0,q^{\delta}_0)$ satisfying \eqref{rhonot}--\eqref{qnot} so that, in the limit $\delta\rightarrow 0$, we can recover the initial data $\rho_{\mc{F}_0}$ and $q_{\mc{F}_0}$ on $\mc{F}_0$.
 We set
 \begin{equation*}
 \rho_0 = \rho_{\mc{F}_0}(1-\mathds{1}_{\mc{S}_0}) + \rho_{\mc{S}_0}\mathds{1}_{\mc{S}_0},
 \end{equation*}
  \begin{equation*}
 q_0 = q_{\mc{F}_0}(1-\mathds{1}_{\mc{S}_0}) + \rho_{\mc{S}_0}u_{\mc{S}_0}\mathds{1}_{\mc{S}_0}.
 \end{equation*}
 Similarly as in \cite[Section 7.10.7, Page 392]{MR2084891}, we can find $\rho^{\delta}_{0} \in L^{\beta}({\Omega)}$ by defining
\begin{equation}\label{init-apprx1}
     \rho^{\delta}_{0}= \mc{K}_{\delta}(\rho_{0}) + \delta,
    \end{equation}
    where $\mc{K}_{\delta}$ is the standard regularizing operator in the space variable.
   Then our initial density satisfies
    \begin{equation}\label{ini-rho}
        \begin{array}{l}
   \rho^{\delta}_{0} \to \rho_{0} \mbox{ strongly in } L^{\gamma }(\Omega)  .
    \end{array}
\end{equation}
We define
\begin{align} \label{init-apprx2}
\overline{{q}^{\delta}_0}= \begin{cases} q_{0}\sqrt{\frac{\rho^{\delta}_{0}}{\rho_{0}}} &\mbox { if } \rho _{0} >0,\\
0 \quad \quad \quad \quad \quad &\mbox { if } \rho _{0} =0.
\end{cases}
\end{align}
From \eqref{init1},  we know that
\begin{equation*}
\frac{|\overline{{q}^{\delta}_0}|}{\sqrt{\rho^{\delta}_{0}}} \in { L^2(\Omega)}.
\end{equation*}
Due to a density argument, there exists
 $h^{\delta} \in W^{1,\infty}({\Omega})$ such that
\begin{equation*}
    \left\|\frac{q^{\delta}_0}{\sqrt{\rho^{\delta}_{0}}} -h^{\delta} \right\|_{L^2(\Omega)}< \delta.
\end{equation*}
Now, we set $ q^{\delta}_0= h^{\delta}\sqrt{\rho^{\delta}_{0}}$, which implies that
 \begin{equation*}
    q^{\delta}_0 \to q_{0} \mbox { in } L^{\frac{2\gamma}{\gamma +1}}(\Omega)
\end{equation*}
and 
\begin{equation*} 
    E^{\delta} [\rho ^{\delta}_0, q^{\delta}_0] \to E[\rho_0,q_0]. 
\end{equation*}

Next we start with the sequence of approximate solutions $\rho^{\delta},u^{\delta}$ of the system \eqref{approx1}--\eqref{approx:initial} (\cref{thm:approxn-delta}). Since the energy $E^{\delta}[\rho ^{\delta}_0,q^{\delta}_0]$ is uniformly bounded with respect to $\delta$, we have
from inequality \eqref{10:45} that 
\begin{multline}\label{again-10:45}
\|\sqrt{\rho^{\delta}} u^{\delta}\|_{L^{\infty}(0,T;L^2(\Omega))}^2 + \|\rho^{\delta}\|_{L^{\infty}(0,T;L^{\gamma}(\Omega))}^2 + \|\sqrt{2\mu^{\delta}}\mathbb{D}(u^{\delta})\|_{L^2((0,T)\times\Omega)}^2 + \|\sqrt{\lambda^{\delta}}\operatorname{div}u^{\delta}\|_{L^2((0,T)\times\Omega)}^2 \\
  + \frac{1}{\delta}\|\sqrt{ \chi^{\delta}_{\mc{S}}}\left(u^{\delta}-P^{\delta}_{\mc{S}}u^{\delta}\right)\|^2_{L^2((0,T)\times \Omega)} \leq C,
  \end{multline}
  with $C$ independent of $\delta$.
  
  \underline{Step 1: Recovery of the transport equation for body.}
   Since $\{u^{\delta},\chi_{\mc{S}}^{\delta}\}$ is a bounded sequence in $L^{2}(0,T; L^2(\Omega)) \times L^{\infty}((0,T)\times \mb{R}^3)$ satisfying
\eqref{approx4}, we can apply \cref{sequential2} to conclude that: up to a subsequence, we have 
\begin{align}
 & u^{\delta} \rightarrow u \mbox{ weakly } \mbox{ in }L^{2}(0,T; L^{2}(\Omega)),\notag\\
& \chi_{\mc{S}}^{\delta} \rightarrow  \chi_{\mc{S}} \mbox{ weakly-}* \mbox{ in }L^{\infty}((0,T)\times \mb{R}^3) \mbox{ and }\mbox{ strongly } \mbox{ in }C([0,T]; L^p_{loc}(\mb{R}^3)) \ (1\leq p<\infty),\label{19:30}
\end{align}
with 
\begin{equation*}
\chi_{\mc{S}}(t,x)=\mathds{1}_{\mc{S}(t)}(x),\quad \mc{S}(t)=\eta_{t,0}(\mc{S}_0),
\end{equation*}
where $\eta_{t,s}\in H^{1}((0,T)^2; C^{\infty}_{loc}(\mb{R}^3))$ is the isometric propagator. Moreover,
\begin{align}
& P_{\mc{S}}^{\delta} u^{\delta} \rightarrow P_{\mc{S}} u \mbox{ weakly } \mbox{ in }L^{2}(0,T; C^{\infty}_{loc}(\mb{R}^3)),\label{prop1}\\
& \eta_{t,s}^{\delta} \rightarrow \eta_{t,s} \mbox{ weakly } \mbox{ in }H^{1}((0,T)^2; C^{\infty}_{loc}(\mb{R}^3)).\notag
\end{align}
Also, we obtain that $\chi_{\mc{S}}$ satisfies 
\begin{equation*}
\frac{\partial {\chi}_{\mc{S}}}{\partial t} + \operatorname{div}(P_{\mc{S}}u\chi_{\mc{S}}) =0 \, \mbox{ in }{\Omega},\quad \chi_{\mc{S}}(t,x)=\mathds{1}_{\mc{S}(t)}(x).
\end{equation*}
Now we set
\begin{equation}\label{uS}
u_{\mc{S}}=P_{\mc{S}}u
\end{equation}
to recover the transport equation \eqref{NO4}. Note that we have already recovered the regularity of $\chi_{\mc{S}}$ in  \eqref{NO1}.

Observe that the fifth term of inequality \eqref{again-10:45} yields 
\begin{equation}\label{12:04}
\sqrt{ \chi^{\delta}_{\mc{S}}}\left(u^{\delta}-P^{\delta}_{\mc{S}}u^{\delta}\right) \rightarrow 0 \mbox{ strongly in } L^2((0,T)\times \Omega).
\end{equation}  
 The strong convergence of $\chi_{\mc{S}}^{\delta}$ and the weak convergence of $u^{\delta}$ and $P^{\delta}_{\mc{S}}u^{\delta}$ imply that \begin{equation}\label{re:solidvel}
 \chi_{\mc{S}}\left(u-u_{\mc{S}}\right)=0.
 \end{equation}
 To analyze the behaviour of the velocity field in the fluid part, we introduce the following continuous extension operator:
 \begin{equation}\label{Eu1}
 \mc{E}_u^{\delta}(t): \left\{ u\in H^{1}(\mc{F}^{\delta}(t)),\ u\cdot \nu=0\mbox{ on }\partial\Omega\right\} \rightarrow  H^{1}(\Omega).
 \end{equation}
 Let us set
 \begin{equation}\label{Eu2}
 u^{\delta}_{\mc{F}}(t,\cdot)=\mc{E}_u^{\delta}(t)\left[u^{\delta}(t,\cdot)|_{\mc{F}^{\delta}}\right].
 \end{equation}
 We have
 \begin{equation}\label{ext:fluid}
 \{u^{\delta}_{\mc{F}}\} \mbox{ is bounded in }L^2(0,T;H^1(\Omega)),\quad  u^{\delta}_{\mc{F}}=u^{\delta}  \mbox{ on } \mc{F}^{\delta},\mbox{ i.e.\ } (1-\chi_{\mc{S}}^{\delta})(u^{\delta}-u^{\delta}_{\mc{F}})=0.
 \end{equation}
 Thus, the strong convergence of $\chi_{\mc{S}}^{\delta}$ and the weak convergence of $u^{\delta}_{\mc{F}}\rightarrow u_{\mc{F}}$ in $L^2(0,T;H^1(\Omega))$ yield that
 \begin{equation}\label{re:fluidvel}
 (1-\chi_{\mc{S}})\left(u-u_{\mc{F}}\right)=0.
 \end{equation}
 By combining the relations \eqref{re:solidvel}--\eqref{re:fluidvel}, we conclude that the limit $u$ of $u^{\delta}$ satisfies $u\in L^{2}(0,T; L^2(\Omega))$ and there exists $u_{\mc{F}}\in L^2(0,T; H^1(\Omega))$, $u_{\mc{S}}\in L^{2}(0,T; \mc{R})$ such that $u(t,\cdot)=u_{\mc{F}}(t,\cdot)$ on $\mc{F}(t)$ and $u(t,\cdot)=u_{\mc{S}}(t,\cdot)$ on $\mc{S}(t)$.

\underline{Step 2: Recovery of the continuity equations.}
We recall that $\rho^{\delta}{\chi}^{\delta}_{\mc{S}}(t,x)$ satisfies \eqref{approx5}, i.e.
\begin{equation*}
\frac{\partial }{\partial t}(\rho^{\delta}{\chi}^{\delta}_{\mc{S}}) + P^{\delta}_{\mc{S}}u^{\delta} \cdot \nabla (\rho^{\delta}{\chi}^{\delta}_{\mc{S}})=0,\quad  (\rho^{\delta}{\chi}^{\delta}_{\mc{S}})|_{t=0}=\rho_0^{\delta}\mathds{1}_{\mc{S}_0}.
\end{equation*}
We proceed as in \cref{sequential2} to conclude that
\begin{align}\label{19:31}
\rho^{\delta}\chi_{\mc{S}}^{\delta} \rightarrow  \rho\chi_{\mc{S}} \mbox{ weakly-}* \mbox{ in }L^{\infty}((0,T)\times \mb{R}^3) &\mbox{ and }\mbox{ strongly } \mbox{ in }C([0,T]; L^p_{loc}(\mb{R}^3)) \ (1\leq p<\infty),
\end{align}
and $\rho\chi_{\mc{S}}$ satisfies
\begin{equation*}
\frac{\partial }{\partial t}(\rho{\chi}_{\mc{S}}) + P_{\mc{S}}u \cdot \nabla (\rho{\chi}_{\mc{S}})=0,\quad  (\rho{\chi}_{\mc{S}})|_{t=0}= \rho_{\mc{S}_0}\mathds{1}_{\mc{S}_0}.
\end{equation*}
We set
\begin{equation}\label{rhoS}
\rho_{\mc{S}}= \rho\chi_{\mc{S}}
\end{equation}
 and use the definition of $u_{\mc{S}}$ in \eqref{uS} to conclude that 
$\rho_{\mc{S}}$ satisfies:
\begin{equation*}
\frac{\partial {\rho}_{\mc{S}}}{\partial t} + \operatorname{div}(u_{\mc{S}}\rho_{\mc{S}}) =0 \, \mbox{ in }(0,T)\times{\Omega},\quad \rho_{\mc{S}}(0,x)=\rho_{\mc{S}_0}(x)\mathds{1}_{\mc{S}_0}\mbox{ in }\Omega.
\end{equation*}
Thus, we recover the equation of continuity \eqref{NO5} for the density of the rigid body.

We introduce the following extension operator:
\begin{equation*}
 \mc{E}_{\rho}^{\delta}(t): \left\{ \rho\in L^{\gamma+\theta}(\mc{F}^{\delta}(t))\right\} \rightarrow  L^{\gamma+\theta}(\Omega),
 \end{equation*}
 given by 
 \begin{equation}\label{Erho}
 \mc{E}_{\rho}^{\delta}(t)\left[\rho^{\delta}(t,\cdot)|_{\mc{F}^{\delta}}\right]=
 \begin{cases}
 \rho^{\delta}(t,\cdot)|_{\mc{F}^{\delta}}&\mbox{ in }\mc{F}^{\delta}(t),\\
 0 &\mbox{ in }\Omega\setminus \mc{F}^{\delta}(t).
 \end{cases}
 \end{equation}
 Let us set
 \begin{equation}\label{15:21}
 \rho^{\delta}_{\mc{F}}(t,\cdot)=E_{\rho}^{\delta}(t)\left[\rho^{\delta}(t,\cdot)|_{\mc{F}^{\delta}}\right].
 \end{equation}
From estimates \eqref{10:45}, \eqref{rho:improved}, \eqref{ext:fluid} and the definition of $\rho^{\delta}_{\mc{F}}$ in \eqref{15:21}, we obtain that
\begin{align}
 u^{\delta}_{\mc{F}}\rightarrow u_{\mc{F}}&\mbox{ weakly in }L^2(0,T; H^1(\Omega)),\label{ag:conv1}\\
 \rho^{\delta}_{\mc{F}}\rightarrow \rho_{\mc{F}}\mbox{ weakly in }L^{\gamma+\theta}((0,T)&\times \Omega),\ \theta=\frac{2}{3}\gamma-1\mbox{ and weakly-}*\mbox{ in } L^{\infty}(0,T;L^{\beta}(\Omega)),\label{ag:conv2}\\
 (\rho^{\delta}_{\mc{F}})^{\gamma}\rightarrow \overline{ \rho^{\gamma}_{\mc{F}}}&\mbox{ weakly in  }L^{\frac{\gamma+\theta}{\gamma}}((0,T)\times\Omega),\label{ag:conv3}\\
 \delta(\rho^{\delta}_{\mc{F}})^{\beta}\rightarrow 0&\mbox{ weakly in }L^{\frac{\beta+\theta}{\beta}}((0,T)\times \Omega)\label{ag:conv4}.
\end{align}
Next, we follow the ideas of \cite[Auxiliary lemma 7.53, Page 384]{MR2084891} to assert: \black if $u_{\mc{F}}, \rho_{\mc{F}}, \overline{ \rho_{\mc{F}}^{\gamma}}$ are defined by \eqref{ag:conv1}--\eqref{ag:conv3}, we have
\begin{itemize}
\item $(\rho_{\mc{F}},u_{\mc{F}})$ satisfies:
\begin{equation}\label{agrho:delta} 
\frac{\partial {\rho_{\mc{F}}}}{\partial t} + \operatorname{div}({\rho}_{\mc{F}} u_{\mc{F}}) =0 \mbox{ in }\mc{D}'([0,T)\times \mb{R}^3).
\end{equation} 
\item The couple $(\rho_{\mc{F}},u_{\mc{F}})$ satisfies the identity
\begin{equation}\label{agrenorm:delta}
\partial_t \overline{b(\rho_{\mc{F}})} + \operatorname{div}(\overline{b(\rho_{\mc{F}})}u_{\mc{F}})+\overline{[b'(\rho_{\mc{F}})\rho_{\mc{F}} - b(\rho_{\mc{F}})]\operatorname{div}u_{\mc{F}}}=0 \mbox{ in }\mc{D}'([0,T)\times \mb{R}^3),
\end{equation}
 for any $b\in C([0,\infty)) \cap C^1((0,\infty))$ satisfying
\eqref{eq:b} and the weak limits $\overline{b(\rho_{\mc{F}})}$ and $\overline{[b'(\rho_{\mc{F}})\rho_{\mc{F}} - b(\rho_{\mc{F}})]\operatorname{div}u_{\mc{F}}}$ being defined in the following sense: 
\begin{align*}
& b(\rho^{\delta}_{\mc{F}}) \rightarrow \overline{b(\rho_{\mc{F}})} \mbox{ weakly-}*\mbox{ in }L^{\infty}(0,T; L^{\frac{\gamma}{1+\kappa_1}}(\mb{R}^3)),\\
& [b'(\rho^{\delta}_{\mc{F}})\rho^{\delta}_{\mc{F}} - b(\rho^{\delta}_{\mc{F}})]\operatorname{div}u^{\delta}_{\mc{F}} \rightarrow \overline{[b'(\rho_{\mc{F}})\rho_{\mc{F}} - b(\rho_{\mc{F}})]\operatorname{div}u_{\mc{F}}} \mbox{ weakly in }L^2(0,T; L^{\frac{2\gamma}{2+2\kappa_1+\gamma}}(\mb{R}^3)).
\end{align*}
\end{itemize}
We outline the main idea of the proof of the asserted result. We derive \eqref{agrho:delta} by letting $\delta\rightarrow 0$ in equation \eqref{approx2} with the help of the convergence of the density in \eqref{ag:conv2} and the convergence of the momentum \cite[Section 7.10.1, page 383]{MR2084891}
\begin{align}\label{ag:product1}
\rho^{\delta}_{\mc{F}}u^{\delta}_{\mc{F}}\rightarrow \rho_{\mc{F}} u_{\mc{F}} \mbox{ weakly-}* \mbox{ in }L^{\infty}(0,T;L^{\frac{2\gamma}{\gamma+1}}(\mb{R}^3)),\mbox{ weakly in }L^2(0,T;L^{\frac{6\gamma}{\gamma+6}}(\mb{R}^3)).
\end{align}
Recall that when we pass to the limit $\varepsilon\rightarrow 0$, we do have $\rho_{\mc{F}}^{\delta} \in L^2((0,T)\times \Omega)$. But in this step, we do not have $\rho_{\mc{F}} \in L^2((0,T)\times \Omega)$.  So, it is not straightforward to obtain the renormalized continuity equation. Observe that this difficulty is not present in the case of $\gamma\geq \frac{9}{5}$ as in that case, $\rho_{\mc{F}}\in L^{\gamma+\theta}((0,T)\times\Omega)\subset L^2((0,T)\times\Omega)$ since $\gamma+\theta=\frac{5}{3}\gamma-1\geq 2$ for $\gamma\geq \frac{9}{5}$.

We use equation \eqref{rho:renorm1} and \cite[Lemma 1.7]{MR2084891} to establish that $\{b(\rho^{\delta}_{\mc{F}})\}$ is uniformly continuous in $W^{-1,s}(\Omega)$ with $s=\min\left\{\frac{6\gamma}{6\kappa_1+6+\gamma},2\right\}$, where the function $b\in C([0,\infty)) \cap C^1((0,\infty))$ satisfies
\eqref{eq:b}. We apply \cite[Lemma 6.2, Lemma 6.4]{MR2084891} to get
\begin{align*}
b(\rho^{\delta}_{\mc{F}}) \rightarrow \overline{b(\rho_{\mc{F}})} &\mbox{ in }C([0,T]; L^{\frac{\gamma}{1+\kappa_1}}(\Omega)),\\
b(\rho^{\delta}_{\mc{F}}) \rightarrow \overline{b(\rho_{\mc{F}})}& \mbox{ strongly in }L^p(0,T; W^{-1,2}(\Omega)), \quad 1\leq p < \infty.
\end{align*}
The above mentioned limits together with \eqref{ag:conv1} help us to conclude 
\begin{equation*}
b(\rho^{\delta}_{\mc{F}})u^{\delta}_{\mc{F}}\rightarrow \overline{b(\rho_{\mc{F}})}u_{\mc{F}} \mbox{ weakly in }L^2\left(0,T; L^{\frac{6\gamma}{6\kappa_1+6+\gamma}}(\Omega)\right).
\end{equation*}
Eventually, we obtain \eqref{agrenorm:delta} by taking the limit $\delta\rightarrow 0$ in \eqref{rho:renorm1}.

\underline{Step 3:  Recovery of the renormalized continuity equation.}
The method  of an effective viscous flux with an appropriate choice of functions \cite[Lemma 7.55, page 386]{MR2084891} helps us to establish boundedness of oscillations of the density sequence and we have an estimate for the amplitude of oscillations \cite[Lemma 7.56, page 386]{MR2084891}:

\begin{equation*}
\limsup_{\delta\rightarrow 0} \int\limits_{0}^T\int\limits_{\Omega} [T_k(\rho^{\delta}_{\mc{F}})-T_k(\rho_{\mc{F}})]^{\gamma+1} \leq \int\limits_{0}^T\int\limits_{\Omega} \left[\overline{\rho_{\mc{F}}^{\gamma}T_k(\rho_{\mc{F}})} - \overline{\rho^{\gamma}_{\mc{F}}}\overline{T_k(\rho_{\mc{F}})}\right],
\end{equation*}
where $T_k(\rho_{\mc{F}})=\min\{\rho_{\mc{F}},k\}$, $k >0$, are cut-off operators and $\overline{T_k(\rho_{\mc{F}})}$, $\overline{\rho_{\mc{F}}^{\gamma}T_k(\rho_{\mc{F}})}$ stand for the weak limits of $T_k(\rho_{\mc{F}}^{\delta})$, $(\rho_{\mc{F}}^{\delta})^{\gamma}T_k(\rho_{\mc{F}}^{\delta})$. This result allows us to estimate the quantities
\begin{align*}
& \sup_{\delta> 0} \|\rho_{\mc{F}}^{\delta}\mathds{1}_{\{\rho_{\mc{F}}^{\delta}\geq k\}}\|_{L^p((0,T)\times\Omega)}, \quad \sup_{\delta> 0} \|T_k(\rho_{\mc{F}}^{\delta})-\rho_{\mc{F}}^{\delta}\|_{L^p((0,T)\times\Omega)},\\ & \|\overline{T_k(\rho_{\mc{F}})}-\rho_{\mc{F}}\|_{L^p((0,T)\times\Omega)},\quad \|T_k(\rho_{\mc{F}})-\rho_{\mc{F}}\|_{L^p((0,T)\times\Omega)} \mbox{  with  }k>0, \ 1\leq p< \gamma+\theta.
\end{align*}
Using the above estimate and taking the renormalized function $b=T_k$ in \eqref{agrenorm:delta}, after several computations we obtain \cite[Lemma 7.57, page 388]{MR2084891}:  Let $b\in C([0,\infty)) \cap C^1((0,\infty))$ satisfy \eqref{eq:b} with $\kappa_1+1\leq \frac{\gamma+\theta}{2}$ and let $u_{\mc{F}}$, $\rho_{\mc{F}}$ be defined by \eqref{ag:conv1}--\eqref{ag:conv2}. Then we obtain the renormalized continuity equation \eqref{NO3}:
\begin{equation*}
\partial_t b(\rho_{\mc{F}}) + \operatorname{div}(b(\rho_{\mc{F}})u_{\mc{F}}) + (b'(\rho_{\mc{F}})-b(\rho_{\mc{F}}))\operatorname{div}u_{\mc{F}}=0 \mbox{ in }\, \mc{D}'([0,T)\times {\Omega}) .
\end{equation*}
So far, we have recovered the transport equation of the body \eqref{NO4}, the continuity equation \eqref{NO2} and the renormalized one \eqref{NO3}. It remains to prove the momentum equation \eqref{weak-momentum} and establish the energy inequality \eqref{energy}.

\underline{Step 4: Recovery of  the momentum equation.}
Notice that the test functions in the weak formulation of momentum equation \eqref{weak-momentum} belong to the space $V_T$ (the space is defined in \eqref{def:test}), which is a space of discontinuous functions. Precisely, 
\begin{equation*}
\phi=(1-\chi_{\mc{S}})\phi_{\mc{F}} + \chi_{\mc{S}}\phi_{\mc{S}}\mbox{ with }\phi_{\mc{F}}\in \mc{D}([0,T);\mc{D}(\overline{\Omega})),\quad \phi_{\mc{S}}\in \mc{D}([0,T);\mc{R}),
\end{equation*}
satisfying 
\begin{equation*}
\phi_{\mc{F}}\cdot \nu=0 \mbox{ on }\partial\Omega,\quad \phi_{\mc{F}}\cdot \nu= \phi_{\mc{S}}\cdot \nu\mbox{ on }\partial\mc{S}(t).
\end{equation*}
Whereas, if we look at the test functions in momentum equation \eqref{approx3} in the $\delta$-approximation, we see that it involves an $L^p(0,T; W^{1,p}(\Omega))$-regularity. Hence we  approximate this discontinuous test function by a sequence of test functions that belong to $L^p(0,T; W^{1,p}(\Omega))$. The idea is to construct an approximation $\phi^{\delta}_{\mc{S}}$ of $\phi$ without jumps at the interface such that 
\begin{equation}\label{cond1-good}
\phi^{\delta}_{\mc{S}}(t,x)=\phi_{\mc{F}}(t,x) \quad \forall\ t\in (0,T),\ x\in \partial\mc{S}^{\delta}(t), 
\end{equation}
and 
\begin{equation}\label{cond2-good}
\phi^{\delta}_{\mc{S}}(t,\cdot)\approx \phi_{\mc{S}}(t,\cdot)\mbox{ in }\mc{S}^{\delta}(t)\mbox{ away from a }\delta^{\vartheta}\mbox{ neighborhood of }\partial\mc{S}^{\delta}(t)\mbox{ with }\vartheta >0.
\end{equation}
In the spirit of \cite[Proposition 5.1]{MR3272367}, at first, we  give the precise result regarding this construction and then we will continue the proof of \cref{exist:upto collision}.
\begin{proposition}\label{approx:test}
Let $\phi \in V_T$ and $\vartheta >0$. Then there exists a sequence 
$$\phi^{\delta} \in H^1(0,T; L^{2}(\Omega)) \cap L^r(0,T; W^{1,{r}}(\Omega)),\mbox{ where } r=\max\left\{\beta+1, \frac{\beta+\theta}{\theta}\right\},\ \beta \geq \max\{8,\gamma\}\mbox{ and }\theta=\frac{2}{3}\gamma -1 $$ 
of the form
\begin{equation}\label{form:phi}
\phi^{\delta}=(1-\chi^{\delta}_{\mc{S}})\phi_{\mc{F}} + \chi^{\delta}_{\mc{S}}\phi^{\delta}_{\mc{S}}
\end{equation}
that satisfies for all $p\in [1,\infty)$:
\begin{enumerate}
\item $\|\chi^{\delta}_{\mc{S}}(\phi^{\delta}_{\mc{S}}-\phi_{\mc{S}})\|_{L^p((0,T)\times \Omega))}=\mc{O}(\delta^{\vartheta/p})$,
\item $\phi^{\delta} \rightarrow \phi$ strongly in $L^p((0,T)\times \Omega)$,
\item $\|\phi^{\delta}\|_{L^p(0,T;W^{1,p}(\Omega))}=\mc{O}(\delta^{-\vartheta(1-1/p)})$,
\item $\|\chi^{\delta}_{\mc{S}}(\partial_t + P^{\delta}_{\mc{S}}u^{\delta}\cdot\nabla)(\phi^{\delta}-\phi_{\mc{S}})\|_{L^2(0,T;L^p(\Omega))}=\mc{O}(\delta^{\vartheta/p})$,
\item $(\partial_t + P^{\delta}_{\mc{S}}u^{\delta}\cdot\nabla)\phi^{\delta}\rightarrow (\partial_t + P_{\mc{S}}u\cdot \nabla)\phi$ weakly in $L^2(0,T;L^p(\Omega))$.
\end{enumerate}
\end{proposition}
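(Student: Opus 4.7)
My plan follows the test-function construction of G\'erard-Varet--Hillairet~\cite{MR3272367}, adapted to the compressible setting: smear the tangential jump of $\phi$ across $\partial\mc{S}^{\delta}(t)$ over a boundary layer of thickness $\delta^{\vartheta}$. Let $\tilde{d}^{\delta}(t,x)=\mathrm{dist}(x,\partial\mc{S}^{\delta}(t))$ for $x\in\mc{S}^{\delta}(t)$; because $\mc{S}^{\delta}$ is $C^{2+\kappa}$, this distance function is smooth with $|\nabla\tilde{d}^{\delta}|=1$ in a uniform tubular neighborhood of $\partial\mc{S}^{\delta}(t)$. Fix $\eta\in C^{\infty}([0,\infty);[0,1])$ with $\eta(0)=0$ and $\eta(s)=1$ for $s\geq 1$, set $\eta_{\delta}(s)=\eta(s/\delta^{\vartheta})$, and define
\begin{equation*}
\phi^{\delta}_{\mc{S}}(t,x)=\phi_{\mc{F}}(t,x)+\eta_{\delta}(\tilde{d}^{\delta}(t,x))\bigl(\phi_{\mc{S}}(t,x)-\phi_{\mc{F}}(t,x)\bigr),\qquad x\in\mc{S}^{\delta}(t),
\end{equation*}
so that $\phi^{\delta}:=(1-\chi^{\delta}_{\mc{S}})\phi_{\mc{F}}+\chi^{\delta}_{\mc{S}}\phi^{\delta}_{\mc{S}}=\phi_{\mc{F}}+\chi^{\delta}_{\mc{S}}\eta_{\delta}(\tilde{d}^{\delta})(\phi_{\mc{S}}-\phi_{\mc{F}})$. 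Since $\eta_{\delta}(0)=0$, the trace of $\phi^{\delta}_{\mc{S}}$ on $\partial\mc{S}^{\delta}(t)$ equals $\phi_{\mc{F}}$, so $\phi^{\delta}$ has no jump across the interface, and for each fixed $\delta>0$ it inherits the claimed regularity from the smoothness of $\phi_{\mc{F}},\phi_{\mc{S}},\chi^{\delta}_{\mc{S}}$ and $\tilde{d}^{\delta}$.

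For properties (1)--(3), the key quantitative input is that the boundary layer $L_{\delta}(t):=\{x\in\mc{S}^{\delta}(t):\tilde{d}^{\delta}(t,x)<\delta^{\vartheta}\}$ has volume $\mc{O}(\delta^{\vartheta})$ uniformly in $t$. On $\mc{S}^{\delta}(t)$, $\phi^{\delta}_{\mc{S}}-\phi_{\mc{S}}=(1-\eta_{\delta}(\tilde{d}^{\delta}))(\phi_{\mc{F}}-\phi_{\mc{S}})$ is supported in $L_{\delta}(t)$ and $|\phi_{\mc{F}}-\phi_{\mc{S}}|$ is uniformly bounded, giving (1). Property (2) then follows by combining (1) with the strong convergence $\chi^{\delta}_{\mc{S}}\to\chi_{\mc{S}}$ in $C([0,T];L^{p}_{\mathrm{loc}})$ from \eqref{19:30}. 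For (3), the distributional gradient of $\chi^{\delta}_{\mc{S}}\eta_{\delta}(\tilde{d}^{\delta})$ decomposes as an interior part, of order $\delta^{-\vartheta}\eta'(\tilde{d}^{\delta}/\delta^{\vartheta})\nabla\tilde{d}^{\delta}$ on $L_{\delta}(t)$, plus a jump term supported on $\partial\mc{S}^{\delta}(t)$ and weighted by $\eta_{\delta}(0)=0$, which therefore vanishes. A gradient of size $\delta^{-\vartheta}$ on a set of volume $\delta^{\vartheta}$ has $L^{p}$-norm $\mc{O}(\delta^{-\vartheta(1-1/p)})$, yielding (3).

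The decisive observation for (4)--(5) is that the rigid flow generated by $P^{\delta}_{\mc{S}}u^{\delta}$ preserves $\mc{S}^{\delta}(t)$ and hence the distance to its boundary, so both $\chi^{\delta}_{\mc{S}}$ and $\tilde{d}^{\delta}$ are transported by $P^{\delta}_{\mc{S}}u^{\delta}$. Consequently
\begin{equation*}
(\partial_{t}+P^{\delta}_{\mc{S}}u^{\delta}\cdot\nabla)\bigl[\chi^{\delta}_{\mc{S}}\eta_{\delta}(\tilde{d}^{\delta})\bigr]=0,
\end{equation*}
which, together with the expansion $\phi^{\delta}-\phi_{\mc{S}}=(\phi_{\mc{F}}-\phi_{\mc{S}})-\chi^{\delta}_{\mc{S}}\eta_{\delta}(\tilde{d}^{\delta})(\phi_{\mc{F}}-\phi_{\mc{S}})$, reduces $\chi^{\delta}_{\mc{S}}(\partial_{t}+P^{\delta}_{\mc{S}}u^{\delta}\cdot\nabla)(\phi^{\delta}-\phi_{\mc{S}})$ to $\chi^{\delta}_{\mc{S}}(1-\eta_{\delta}(\tilde{d}^{\delta}))(\partial_{t}+P^{\delta}_{\mc{S}}u^{\delta}\cdot\nabla)(\phi_{\mc{F}}-\phi_{\mc{S}})$. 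Since $\phi_{\mc{F}},\phi_{\mc{S}}$ are smooth and $P^{\delta}_{\mc{S}}u^{\delta}$ is bounded in $L^{2}(0,T;C^{\infty}_{\mathrm{loc}})$ thanks to \eqref{prop1}, another application of the layer volume estimate gives (4). For (5), the analogous identity $(\partial_{t}+P^{\delta}_{\mc{S}}u^{\delta}\cdot\nabla)\phi^{\delta}=(\partial_{t}+P^{\delta}_{\mc{S}}u^{\delta}\cdot\nabla)\phi_{\mc{F}}+\chi^{\delta}_{\mc{S}}\eta_{\delta}(\tilde{d}^{\delta})(\partial_{t}+P^{\delta}_{\mc{S}}u^{\delta}\cdot\nabla)(\phi_{\mc{S}}-\phi_{\mc{F}})$ together with $\chi^{\delta}_{\mc{S}}\eta_{\delta}(\tilde{d}^{\delta})\to\chi_{\mc{S}}$ strongly (via (1) and \eqref{19:30}) and the weak convergence $P^{\delta}_{\mc{S}}u^{\delta}\rightharpoonup P_{\mc{S}}u$ allows one to pass to the limit and recover the material derivative of $\phi$ in the piecewise sense $(1-\chi_{\mc{S}})(\partial_{t}+P_{\mc{S}}u\cdot\nabla)\phi_{\mc{F}}+\chi_{\mc{S}}(\partial_{t}+P_{\mc{S}}u\cdot\nabla)\phi_{\mc{S}}$. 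The main obstacle is the rigorous justification of the transport identity for $\tilde{d}^{\delta}$, which requires localizing the whole argument in the tubular neighborhood where $\tilde{d}^{\delta}$ is smooth, and the weak-times-strong passage in (5), where only weak convergence of $P^{\delta}_{\mc{S}}u^{\delta}$ is available and compensating strong compactness of $\chi^{\delta}_{\mc{S}}\eta_{\delta}(\tilde{d}^{\delta})$ must be invoked.
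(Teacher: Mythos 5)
Your construction is, at heart, the same boundary-layer idea as the paper's: interpolate between $\phi_{\mc{F}}$ and $\phi_{\mc{S}}$ across a layer of thickness $\delta^{\vartheta}$ around $\partial\mc{S}^{\delta}(t)$, and exploit that the layer is transported by the rigid flow so that the material derivative never hits the cutoff at order $\delta^{-\vartheta}$. Your Eulerian formulation via the transported distance function $\tilde d^{\delta}$ is equivalent to the paper's Lagrangian change of variables through the propagator $\eta^{\delta}_{t,0}$ (your ``transport identity for $\tilde d^{\delta}$'' is exactly the statement that $\tilde d^{\delta}(t,\eta^{\delta}_{t,0}(y))=\operatorname{dist}(y,\partial\mc{S}_0)$, so it is rigorous once you replace $\tilde d^{\delta}$ by a smooth extension away from the tubular neighborhood, where $\eta_{\delta}\equiv 1$ anyway). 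With that, your verifications of (1)--(5) are sound: the layer-volume bound gives (1) and, combined with \eqref{19:30}, gives (2); the gradient of size $\delta^{-\vartheta}$ on a set of measure $\delta^{\vartheta}$ gives (3); and the transport identity plus $P^{\delta}_{\mc{S}}u^{\delta}\in L^2(0,T;C^{\infty}_{loc})$ gives (4) and the weak--strong product in (5).

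The substantive deviation from the paper is that you regularize the \emph{full} jump $\phi_{\mc{S}}-\phi_{\mc{F}}$ and omit any divergence correction. The paper's $\Phi^{\delta}_{\mc{S},1}$ in \eqref{phis1} cuts off only the \emph{tangential} part of the jump (this is where the hypothesis $\phi_{\mc{F}}\cdot\nu=\phi_{\mc{S}}\cdot\nu$ from $V_T$ is exploited): the corrector field is tangent to the level sets of $z$, so the dangerous $\delta^{-\vartheta}\chi'$ contribution to $\operatorname{div}\Phi^{\delta}_{\mc{S},1}$ vanishes identically, leaving a bounded divergence confined to the layer, which is then removed exactly by the small Bogovskii corrector $\Phi^{\delta}_{\mc{S},2}$ of \eqref{Phi2}. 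The outcome is $\operatorname{div}\phi^{\delta}_{\mc{S}}=0$ in $\mc{S}^{\delta}(t)$. That property is not listed in the statement of \cref{approx:test}, so your proof does establish the proposition as written; but it is used silently in Step 4.3 of the proof of \cref{exist:upto collision} to annihilate the artificial pressure contribution $\int\delta(\rho^{\delta})^{\beta}\chi^{\delta}_{\mc{S}}\operatorname{div}\phi^{\delta}_{\mc{S}}$ on the solid, where only the weak bound $\delta^{1/(\beta+\theta)}\|\rho^{\delta}\|_{L^{\beta+\theta}}\leq c$ is available. With your $\phi^{\delta}_{\mc{S}}$ this term does not vanish, and one must instead estimate it; this is in fact still possible because your $\operatorname{div}\phi^{\delta}_{\mc{S}}$ is $\mc{O}(1)$ and supported in the $\delta^{\vartheta}$-layer (the normal jump vanishes on the interface, so the $\delta^{-\vartheta}$ normal-derivative term is $\mc{O}(1)$ there), but the argument of Step 4.3 would have to be rewritten. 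Either add the Bogovskii corrector to your construction or record the extra estimate; as it stands, your proposal proves the proposition but quietly invalidates the way it is later applied.
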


We give the proof of \cref{approx:test} at the end of this section. Next we continue the proof of \cref{exist:upto collision}.

\underline{Step 4.1: Linear terms of the momentum equation.}
We use $\phi^{\delta}$ (constructed in  \cref{approx:test}) as the test function in \eqref{approx3}. Then we take the limit $\delta\rightarrow 0$ in \eqref{approx3} to recover equation \eqref{weak-momentum}. Let us analyze the passage to the limit in the linear terms. To begin with, we recall the following convergences of the velocities of the fluid part and the solid part, cf.\ \eqref{ag:conv1} and \eqref{prop1}:
\begin{align*}
& (1-\chi_{\mc{S}}^{\delta})u^{\delta}_{\mc{F}}= (1-\chi^{\delta}_{\mc{S}})u^{\delta},\quad u^{\delta}_{\mc{F}}\rightarrow u_{\mc{F}}\mbox{ weakly in }L^2(0,T;H^1(\Omega)),\\
& u^{\delta}_{\mc{S}}=P^{\delta}_{\mc{S}}u^{\delta},\ u_{\mc{S}}=P_{\mc{S}}u,\quad u^{\delta}_{\mc{S}}\rightarrow u_{\mc{S}}\mbox{ weakly in }L^2(0,T; C^{\infty}_{loc}(\mb{R}^3)).
\end{align*}
Let us start with the diffusion term $2\mu^{\delta}\mathbb{D}(u^{\delta}):\mathbb{D}(\phi^{\delta}) + \lambda^{\delta}\operatorname{div}u^{\delta}\mathbb{I} : \mathbb{D}(\phi^{\delta})$ in \eqref{approx3}. We write
\begin{align*}
\int\limits_0^T\int\limits_{\Omega} 2\mu^{\delta}\mathbb{D}(u^{\delta}):\mathbb{D}(\phi^{\delta}) &= 
\int\limits_0^T\int\limits_{\Omega} \Big(2\mu_{\mc{F}}(1-\chi^{\delta}_{\mc{S}})\mb{D}(u_{\mc{F}}^{\delta}) + \delta^2 \chi_{\mc{S}}^{\delta}\mb{D}(u^{\delta})\Big): \mb{D}(\phi^{\delta})\\
&=\int\limits_0^T\int\limits_{\Omega} 2\mu_{\mc{F}}(1-\chi^{\delta}_{\mc{S}})\mb{D}(u_{\mc{F}}^{\delta}):\mb{D}(\phi_{\mc{F}}) + \delta^2\int\limits_0^T\int\limits_{\Omega} \chi_{\mc{S}}^{\delta}\mb{D}(u^{\delta}): \mb{D}(\phi^{\delta}).
\end{align*}
The strong convergence of $\chi^{\delta}_{\mc{S}}$ to $\chi_{\mc{S}}$ and the weak convergence of $u_{\mc{F}}^{\delta}$ to $u_{\mc{F}}$ imply that 
\begin{equation*}
\int\limits_0^T\int\limits_{\Omega} 2\mu_{\mc{F}}(1-\chi^{\delta}_{\mc{S}})\mb{D}(u_{\mc{F}}^{\delta}):\mb{D}(\phi_{\mc{F}}) \rightarrow \int\limits_0^T\int\limits_{\Omega} 2\mu_{\mc{F}}(1-\chi_{\mc{S}})\mb{D}(u_{\mc{F}}):\mb{D}(\phi_{\mc{F}}).
\end{equation*}
We know from  \eqref{again-10:45}, definition of $\mu^{\delta}$ in \eqref{approx-viscosity} and \cref{approx:test} (with $p=2$ case) that
\begin{equation*}
 \|\delta\chi^{\delta}_{\mc{S}}\mathbb{D}(u^{\delta})\|_{L^2((0,T)\times\Omega)} \leq C,\quad \|\phi^{\delta}\|_{L^2(0,T;H^1(\Omega))}=\mc{O}({\delta}^{-\vartheta/2}).
\end{equation*}
These estimates yield that
\begin{equation} \label{alpha2}
\left|\delta^2\int\limits_0^T\int\limits_{\Omega} \chi_{\mc{S}}^{\delta}\mb{D}(u^{\delta}): \mb{D}(\phi^{\delta})\right|\leq \delta\|\delta\chi^{\delta}_{\mc{S}}\mathbb{D}(u^{\delta})\|_{L^2((0,T)\times\Omega)}\|\mb{D}(\phi^{\delta})\|_{L^2(0,T;L^2(\Omega))}\leq C\delta^{1-\vartheta/2}.
\end{equation}
If we consider $\vartheta<2$ and $\delta\rightarrow 0$, we have
\begin{equation*}
\delta^2\int\limits_0^T\int\limits_{\Omega} \chi_{\mc{S}}^{\delta}\mb{D}(u^{\delta}): \mb{D}(\phi^{\delta}) \rightarrow 0. 
\end{equation*}
Hence, 
\begin{equation*}
\int\limits_0^T\int\limits_{\Omega} \Big(2\mu^{\delta}\mathbb{D}(u^{\delta}):\mathbb{D}(\phi^{\delta}) + \lambda_{\mc{F}}\operatorname{div}u_{\mc{F}}\mathbb{I} : \mathbb{D}(\phi^{\delta})\Big)\rightarrow \int\limits_0^T\int\limits_{\mc{F}(t)} \Big(2\mu_{\mc{F}}\mb{D}(u_{\mc{F}}) + \lambda_{\mc{F}}\operatorname{div}u_{\mc{F}}\mathbb{I}\Big):\mb{D}(\phi_{\mc{F}})
\end{equation*}
as $\delta\rightarrow 0$. Next we consider the boundary term on $\partial\Omega$ in \eqref{approx3}. The weak convergence of $u_{\mc{F}}^{\delta}$ to $u_{\mc{F}}$ in $L^2(0,T;H^1(\Omega))$ yields
\begin{equation*}
\int\limits_0^T\int\limits_{\partial \Omega} (u^{\delta} \times \nu)\cdot (\phi^{\delta} \times \nu)=\int\limits_0^T\int\limits_{\partial \Omega} (u_{\mc{F}}^{\delta}\times \nu)\cdot (\phi_{\mc{F}}\times \nu)
\rightarrow\int\limits_0^T\int\limits_{\partial \Omega} (u_{\mc{F}}\times \nu)\cdot (\phi_{\mc{F}}\times \nu) \mbox{ as }\delta\rightarrow 0.
\end{equation*}
To deal with the boundary term on $\partial \mc{S}^{\delta}(t)$ we do a change of variables such that this term becomes an integral on the fixed boundary $\partial \mc{S}_0$. Then we pass to the limit as $\delta\rightarrow 0$ and afterwards transform back to the moving domain. Next, we  introduce the notation $r^{\delta}_{\mc{S}}=P^{\delta}_{\mc{S}}\phi^{\delta}$ to write the following:
\begin{align*}
\int\limits_0^T\int\limits_{\partial \mc{S}^{\delta}(t)} [(u^{\delta}-P^{\delta}_{\mc{S}}u^{\delta})\times \nu]\cdot [(\phi^{\delta}-P^{\delta}_{\mc{S}}\phi^{\delta})\times \nu]=& \int\limits_0^T\int\limits_{\partial \mc{S}^{\delta}(t)} [(u^{\delta}_{\mc{F}}-u^{\delta}_{\mc{S}})\times \nu]\cdot [(\phi^{\delta}_{\mc{F}}-r^{\delta}_{\mc{S}})\times \nu]\\=& \int\limits_0^T\int\limits_{\partial \mc{S}_0} [(U^{\delta}_{\mc{F}}-U^{\delta}_{\mc{S}})\times \nu]\cdot [(\Phi^{\delta}_{\mc{F}}-R^{\delta}_{\mc{S}})\times \nu]
,\end{align*}
where we denote by capital letters the corresponding velocity fields and test functions in the fixed domain. By \cref{approx:test} we have that $\phi^{\delta} \rightarrow \phi$ strongly in $L^2(0,T;L^6(\Omega))$. Hence we obtain, as in \cref{sequential2}, that
\begin{equation*}
r^{\delta}_{\mc{S}} \rightarrow r_{\mc{S}}=P_{\mc{S}}\phi \mbox{ strongly in }
L^2(0,T; C^{\infty}_{loc}(\mb{R}^3)).
\end{equation*}
Now using \cite[Lemma A.2]{MR3272367}, we obtain the convergence in the fixed domain
\begin{equation*}
R^{\delta}_{\mc{S}} \rightarrow R_{\mc{S}} \mbox{ strongly in }
L^2(0,T; H^{1/2}(\partial\mc{S}_0)).
\end{equation*}
Similarly, the convergences of $u^{\delta}_{\mc{F}}$ and $u^{\delta}_{\mc{S}}$ with \cite[Lemma A.2]{MR3272367} imply
\begin{equation*}
U^{\delta}_{\mc{F}}\rightarrow U_{\mc{F}},\ U^{\delta}_{\mc{S}}\rightarrow U_{\mc{S}}\quad\mbox{weakly in }L^2(0,T;H^1(\Omega)).
\end{equation*}
These convergence results and going back to the moving domain gives
\begin{multline*}
\int\limits_0^T\int\limits_{\partial \mc{S}^{\delta}(t)} [(u^{\delta}-P^{\delta}_{\mc{S}}u^{\delta})\times \nu]\cdot [(\phi^{\delta}-P^{\delta}_{\mc{S}}\phi^{\delta})\times \nu]=  \int\limits_0^T\int\limits_{\partial \mc{S}_0} [(U^{\delta}_{\mc{F}}-U^{\delta}_{\mc{S}})\times \nu]\cdot [(\Phi^{\delta}_{\mc{F}}-R^{\delta}_{\mc{S}})\times \nu]\\ \rightarrow \int\limits_0^T\int\limits_{\partial \mc{S}_0} [(U_{\mc{F}}-U_{\mc{S}})\times \nu]\cdot [(\Phi_{\mc{F}}-R_{\mc{S}})\times \nu]= \int\limits_0^T\int\limits_{\partial \mc{S}(t)} [(u_{\mc{F}}-u_{\mc{S}})\times \nu]\cdot [(\phi_{\mc{F}}-\phi_{\mc{S}})\times \nu].
\end{multline*}
The penalization term can be estimated in the following way: 
\begin{align}
\left|\frac{1}{\delta}\int\limits_0^T\int\limits_{\Omega} \chi^{\delta}_{\mc{S}}(u^{\delta}-P^{\delta}_{\mc{S}}u^{\delta})\cdot (\phi^{\delta}-P^{\delta}_{\mc{S}}\phi^{\delta})\right|=&\left|\frac{1}{\delta}\int\limits_0^T\int\limits_{\Omega} \chi^{\delta}_{\mc{S}}(u^{\delta}-P^{\delta}_{\mc{S}}u^{\delta})\cdot ((\phi^{\delta}-\phi_{\mc{S}})-P^{\delta}_{\mc{S}}(\phi^{\delta}-\phi_{\mc{S}}))\right|\notag\\=\left|\frac{1}{\delta}\int\limits_0^T\int\limits_{\Omega} \chi^{\delta}_{\mc{S}}(u^{\delta}-P^{\delta}_{\mc{S}}u^{\delta})\cdot (\phi^{\delta}_{\mc{S}}-\phi_{\mc{S}})\right| &\leq \delta^{-1/2}\frac{1}{\delta^{1/2}}\left\|\sqrt{ \chi^{\delta}_{\mc{S}}}\left(u^{\delta}-P^{\delta}_{\mc{S}}u^{\delta}\right)\right\|_{L^2((0,T)\times \Omega)}\left\|\sqrt{\chi^{\delta}_{\mc{S}}}(\phi^{\delta}_{\mc{S}}-\phi_{\mc{S}})\right\|_{L^2(0,T;L^2(\Omega)))}\notag\\&
\leq C\delta^{-1/2+\vartheta/2},\label{12:27}
\end{align}
where we have used the estimates obtained from \eqref{again-10:45} and \cref{approx:test}. By choosing $\vartheta>1$ and taking $\delta\rightarrow 0$, the penalization term vanishes.  Note that we also need $\vartheta <2$ in view of \eqref{alpha2}.

\underline{Step 4.2: Nonlinear terms of the momentum equation.}
In this step, we analyze the following terms: 
\begin{multline}\label{convection}
\int\limits_0^T\int\limits_{\Omega} \rho^{\delta} \left(u^{\delta}\cdot \frac{\partial}{\partial t}\phi +  u^{\delta} \otimes u^{\delta} : \nabla \phi^{\delta}\right) = \int\limits_0^T\int\limits_{\Omega} \rho^{\delta}_{\mc{F}}(1-\chi_{\mc{S}}^{\delta}) u^{\delta}_{\mc{F}}\cdot \frac{\partial}{\partial t}\phi_{\mc{F}} + \int\limits_0^T\int\limits_{\Omega} \rho^{\delta}_{\mc{F}}(1-\chi_{\mc{S}}^{\delta}) u^{\delta}_{\mc{F}} \otimes u^{\delta}_{\mc{F}} : \nabla \phi_{\mc{F}} \\+  \int\limits_0^T\int\limits_{\Omega}\rho^{\delta}\chi^{\delta}_{\mc{S}}(\partial_t+u^{\delta}_{\mc{S}}\cdot\nabla)\phi^{\delta}\cdot u^{\delta}
\end{multline}
The strong convergence of $\chi^{\delta}_{\mc{S}}$ to $\chi_{\mc{S}}$ 
and the weak convergence of $\rho^{\delta}u^{\delta}_{\mc{F}}$ to $\rho_{\mc{F}}u_{\mc{F}}$ (see \eqref {ag:product1}) imply 
\begin{equation}\label{A1}
\int\limits_0^T\int\limits_{\Omega} \rho^{\delta}_{\mc{F}}(1-\chi_{\mc{S}}^{\delta}) u^{\delta}_{\mc{F}}\cdot \frac{\partial}{\partial t}\phi_{\mc{F}} \rightarrow \int\limits_0^T\int\limits_{\Omega} \rho_{\mc{F}}(1-\chi_{\mc{S}}) u_{\mc{F}}\cdot \frac{\partial}{\partial t}\phi_{\mc{F}} \quad\mbox{ as }\delta\rightarrow 0.
\end{equation}
 We use the convergence result for the convective term from  \cite[Section 7.10.1, page 384]{MR2084891} 
\begin{equation*}
\rho^{\delta}_{\mc{F}}(u^{\delta}_{\mc{F}})_i (u^{\delta}_{\mc{F}})_j \rightarrow \rho_{\mc{F}} (u_{\mc{F}})_i (u_{\mc{F}})_j\mbox{ weakly in }L^2(0,T;L^{\frac{6\gamma}{4\gamma + 3}}(\Omega)), \quad \forall i,j\in \{1,2,3\}, 
\end{equation*}
to pass to the limit in the second term of the right-hand side of \eqref{convection}:
\begin{equation}\label{A2}
\int\limits_0^T\int\limits_{\Omega} \rho^{\delta}_{\mc{F}}(1-\chi_{\mc{S}}^{\delta}) u^{\delta}_{\mc{F}} \otimes u^{\delta}_{\mc{F}} : \nabla \phi_{\mc{F}} \rightarrow \int\limits_0^T\int\limits_{\Omega} \rho_{\mc{F}}(1-\chi_{\mc{S}}) u_{\mc{F}} \otimes u_{\mc{F}} : \nabla \phi_{\mc{F}}.
\end{equation}
Next we consider the third term on the right-hand side of \eqref{convection}:
\begin{multline*}
\int\limits_0^T\int\limits_{\Omega}\rho^{\delta}\chi^{\delta}_{\mc{S}}(\partial_t+u^{\delta}_{\mc{S}}\cdot\nabla)\phi^{\delta}\cdot u^{\delta}=\int\limits_0^T\int\limits_{\Omega}\rho^{\delta}\chi_{\mc{S}}^{\delta}(\partial_t+u^{\delta}_{\mc{S}}\cdot\nabla)(\phi^{\delta}-\phi_{\mc{S}})\cdot u^{\delta} + \int\limits_0^T\int\limits_{\Omega}\rho^{\delta}\chi_{\mc{S}}^{\delta}\partial_t\phi_{\mc{S}}\cdot u^{\delta}\\ + \int\limits_0^T\int\limits_{\Omega}\rho^{\delta}\chi_{\mc{S}}^{\delta}(u^{\delta}_{\mc{S}}\cdot\nabla)\phi_{\mc{S}}\cdot u^{\delta}=: T_1^{\delta} + T_2^{\delta}+ T_3^{\delta}.
\end{multline*}
We write 
\begin{equation*}
T_1^{\delta} = \int\limits_0^T\int\limits_{\Omega}\rho^{\delta}\chi_{\mc{S}}^{\delta}(\partial_t+u^{\delta}_{\mc{S}}\cdot\nabla)(\phi^{\delta}-\phi_{\mc{S}})\cdot (u^{\delta}-P_{\mc{S}}^{\delta}u^{\delta}) + \int\limits_0^T\int\limits_{\Omega}\rho^{\delta}\chi_{\mc{S}}^{\delta}(\partial_t+u^{\delta}_{\mc{S}}\cdot\nabla)(\phi^{\delta}-\phi_{\mc{S}})\cdot P_{\mc{S}}^{\delta}u^{\delta}.
\end{equation*}
We estimate these terms in the following way:
\begin{multline*}
\left|\int\limits_0^T\int\limits_{\Omega}\rho^{\delta}\chi_{\mc{S}}^{\delta}(\partial_t+u^{\delta}_{\mc{S}}\cdot\nabla)(\phi^{\delta}-\phi_{\mc{S}})\cdot (u^{\delta}-P_{\mc{S}}^{\delta}u^{\delta})\right| \\ \leq \|\rho^{\delta}\chi_{\mc{S}}^{\delta}\|_{L^{\infty}((0,T)\times \Omega)}\|\chi^{\delta}_{\mc{S}}(\partial_t + P^{\delta}_{\mc{S}}u^{\delta}\cdot\nabla)(\phi^{\delta}-\phi_{\mc{S}})\|_{L^2(0,T;L^6(\Omega))}\frac{1}{\delta^{1/2}}\left\|\sqrt{ \chi^{\delta}_{\mc{S}}}\left(u^{\delta}-P^{\delta}_{\mc{S}}u^{\delta}\right)\right\|_{L^2((0,T)\times \Omega)}\delta^{1/2},
\end{multline*}
\begin{multline*}
\left|\int\limits_0^T\int\limits_{\Omega}\rho^{\delta}\chi_{\mc{S}}^{\delta}(\partial_t+u^{\delta}_{\mc{S}}\cdot\nabla)(\phi^{\delta}-\phi_{\mc{S}})\cdot P_{\mc{S}}^{\delta}u^{\delta}\right|\\ \leq \|\rho^{\delta}\chi_{\mc{S}}^{\delta}\|_{L^{\infty}((0,T)\times \Omega)}\|\chi^{\delta}_{\mc{S}}(\partial_t + P^{\delta}_{\mc{S}}u^{\delta}\cdot\nabla)(\phi^{\delta}-\phi_{\mc{S}})\|_{L^2(0,T;L^6(\Omega))}\|P^{\delta}_{\mc{S}}u^{\delta}\|_{L^2((0,T)\times \Omega)},
\end{multline*}
where we have used $\rho^{\delta}\chi_{\mc{S}}^{\delta} \in L^{\infty}((0,T)\times \Omega)$ as it is a solution to \eqref{approx5}.
Moreover, by \cref{approx:test} (with the case $p=6$), we know that for $\vartheta > 0$$$\|\chi^{\delta}_{\mc{S}}(\partial_t + P^{\delta}_{\mc{S}}u^{\delta}\cdot\nabla)(\phi^{\delta}-\phi_{\mc{S}})\|_{L^2(0,T;L^6(\Omega))}=\mc{O}(\delta^{\vartheta/6}).$$
Hence, 
\begin{equation}\label{A3}
T_1^{\delta} \rightarrow 0\mbox{ as } \delta\rightarrow 0.
\end{equation}
Observe that 
\begin{equation*}
T_2^{\delta}= \int\limits_0^T\int\limits_{\Omega}\rho^{\delta}\chi_{\mc{S}}^{\delta}\partial_t\phi_{\mc{S}}\cdot (u^{\delta}-P^{\delta}_{\mc{S}}u^{\delta}) + \int\limits_0^T\int\limits_{\Omega}\rho^{\delta}\chi_{\mc{S}}^{\delta}\partial_t\phi_{\mc{S}}\cdot P^{\delta}_{\mc{S}}u^{\delta}.
\end{equation*}
Now we use the following convergences:
\begin{itemize}
\item the strong convergence of $\sqrt{ \chi^{\delta}_{\mc{S}}}\left(u^{\delta}-P^{\delta}_{\mc{S}}u^{\delta}\right) \rightarrow 0$ in $ L^2((0,T)\times \Omega)$ (see the fifth term of inequality \eqref{again-10:45}), 
\item the strong convergence of $\chi^{\delta}_{\mc{S}}$ to $\chi_{\mc{S}}$ (see the convergence in \eqref{19:30}),
\item the weak convergence of  $\rho^{\delta}\chi_{\mc{S}}^{\delta}P^{\delta}_{\mc{S}}u^{\delta}\mbox{ to }\rho \chi_{\mc{S}}P_{\mc{S}}u$ (see the convergences in \eqref{19:31} and \eqref{prop1}), 
\end{itemize}
to deduce  
\begin{equation*}
T_2^{\delta} \rightarrow  \int\limits_0^T\int\limits_{\mc{S}(t)}\rho \chi_{\mc{S}}\partial_t \phi_{\mc{S}}\cdot P_{\mc{S}}u\quad \mbox{ as }\quad\delta\rightarrow 0.
\end{equation*}
Recall the definition  of $u_{\mc{S}}$ in  \eqref{uS} and the definition of $\rho_{\mc{S}}$ in  \eqref{rhoS}
to conclude
\begin{equation}\label{A4}
T_2^{\delta} \rightarrow  \int\limits_0^T\int\limits_{\mc{S}(t)}\rho_{\mc{S}}\partial_t \phi_{\mc{S}}\cdot u_{\mc{S}}\quad \mbox{ as }\quad\delta\rightarrow 0.
\end{equation}
Notice that 
\begin{equation}\label{A5}
T_3^{\delta}= \int\limits_0^T\int\limits_{\Omega}\rho^{\delta}\chi^{\delta}_{\mc{S}}(u^{\delta}_{\mc{S}}\cdot\nabla)\phi_{\mc{S}}\cdot u^{\delta} = \int\limits_0^T\int\limits_{\Omega}\rho^{\delta}\chi^{\delta}_{\mc{S}}(u^{\delta}_{\mc{S}}\otimes u^{\delta}_{\mc{S}}): \nabla\phi_{\mc{S}}= \int\limits_0^T\int\limits_{\Omega}\rho^{\delta}\chi^{\delta}_{\mc{S}}(u^{\delta}_{\mc{S}}\otimes u^{\delta}_{\mc{S}}): \mb{D}(\phi_{\mc{S}})=0.
\end{equation}
Eventually, combining the results \eqref{A1}--\eqref{A5}, we obtain 
\begin{equation*}
\int\limits_0^T\int\limits_{\Omega} \rho^{\delta} \left(u^{\delta}\cdot \frac{\partial}{\partial t}\phi +  u^{\delta} \otimes u^{\delta} : \nabla \phi\right) \rightarrow \int\limits_0^T\int\limits_{\mc{F}(t)} \rho_{\mc{F}}u_{\mc{F}}\cdot \frac{\partial}{\partial t}\phi_{\mc{F}} + \int\limits_0^T\int\limits_{\mc{S}(t)} \rho_{\mc{S}}u_{\mc{S}}\cdot \frac{\partial}{\partial t}\phi_{\mc{S}} + \int\limits_0^T\int\limits_{\mc{F}(t)} (\rho_{\mc{F}}u_{\mc{F}} \otimes u_{\mc{F}}) : \nabla \phi_{\mc{F}}.
\end{equation*}

\underline{Step 4.3: Pressure term of the momentum equation.}
We use the definition of $\phi^{\delta}$
\begin{equation*}
\phi^{\delta}=(1-\chi^{\delta}_{\mc{S}})\phi_{\mc{F}} + \chi^{\delta}_{\mc{S}}\phi^{\delta}_{\mc{S}},
\end{equation*}
to write
 \begin{equation*}
\int\limits_0^T\int\limits_{\Omega} \left(a^{\delta}(\rho^{\delta})^{\gamma} + {\delta} (\rho^{\delta})^{\beta} \right)\mathbb{I} : \mathbb{D}(\phi^{\delta}) = \int\limits_0^T\int\limits_{\Omega} \left[a_{\mc{F}} (1-\chi^{\delta}_{\mc{S}})(\rho^{\delta}_{\mc{F}})^{\gamma}+ {\delta}(1-\chi^{\delta}_{\mc{S}}) (\rho_{\mc{F}}^{\delta})^{\beta}\right]\mathbb{I} : \mathbb{D}(\phi_{\mc{F}}), 
 \end{equation*}
 where we have used the fact that $\operatorname{div}\phi_{\mc{S}}^{\delta}=0$.
 Due to the strong convergence of $\chi^{\delta}_{\mc{S}}$ to $\chi_{\mc{S}}$ and the weak convergence of $(\rho_{\mc{F}}^{\delta})^{\gamma}$, $(\rho_{\mc{F}}^{\delta})^{\beta}$ in \eqref{ag:conv3}, \eqref{ag:conv4} we obtain \begin{equation*}
 \int\limits_0^T\int\limits_{\Omega} a_{\mc{F}} (1-\chi^{\delta}_{\mc{S}})(\rho^{\delta}_{\mc{F}})^{\gamma}\mathbb{I} : \mathbb{D}(\phi_{\mc{F}}) \rightarrow  \int\limits_0^T\int\limits_{\Omega} a_{\mc{F}} (1-\chi_{\mc{S}})\overline{(\rho_{\mc{F}})^{\gamma}}\mathbb{I} : \mathbb{D}(\phi_{\mc{F}}) \mbox{  as  }\delta \to 0,
 \end{equation*}
 and
  \begin{equation*}
  \int\limits_0^T\int\limits_{\Omega}  {\delta}(1-\chi^{\delta}_{\mc{S}}) (\rho_{\mc{F}}^{\delta})^{\beta}\mathbb{I} : \mathbb{D}(\phi_{\mc{F}}) \rightarrow 0 \mbox{  as  }\delta \to 0.
   \end{equation*}
 
 In order to establish \eqref{weak-momentum}, it only remains to show that $\overline{\rho^{\gamma}_{\mc{F}}}=\rho^{\gamma}_{\mc{F}}$. This is equivalent to establishing some strong convergence result of the sequence $\rho^{\delta}_{\mc{F}}$. We follow the idea of \cite[Lemma 7.60, page 391]{MR2084891} to prove:  Let $\{\rho^{\delta}_{\mc{F}}\}$ be the sequence and $\rho_{\mc{F}}$ be its weak limit from \eqref{ag:conv2}. Then, at least for a chosen subsequence, 
 \begin{equation*}
 \rho^{\delta}_{\mc{F}}\rightarrow \rho_{\mc{F}} \mbox{  in  }L^p((0,T)\times\Omega),\quad 1\leq p < \gamma+\theta.
 \end{equation*}
  This immediately yields $\overline{\rho^{\gamma}_{\mc{F}}}=\rho^{\gamma}_{\mc{F}}$. Thus, we have recovered the weak form of the momentum equation.  


\underline{Step 5: Recovery of the energy estimate.} We derive from \eqref{10:45} that
\begin{multline*}
\int\limits_{\Omega}\Big( \rho^{\delta} |u^{\delta}|^2 + \frac{a_{\mc{F}}}{\gamma-1}(1-\chi^{\delta}_{\mc{S}})(\rho^{\delta}_{\mc{F}})^{\gamma} +  \frac{\delta}{\beta-1}(\rho^{\delta}_{\mc{F}})^{\beta}\Big) + \int\limits_0^T\int\limits_{\Omega} \Big(2\mu_{\mc{F}}(1-\chi^{\delta}_{\mc{S}})|\mb{D}(u_{\mc{F}}^{\delta})|^2 + \lambda_{\mc{F}}(1-\chi^{\delta}_{\mc{S}})|\operatorname{div}u^{\delta}_{\mc{F}}|^2\Big)  
 + \alpha \int\limits_0^T\int\limits_{\partial \Omega} |u^{\delta} \times \nu|^2 \\
 + \alpha \int\limits_0^T\int\limits_{\partial \mc{S}^{\delta}(t)} |(u^{\delta}-P^{\delta}_{\mc{S}}u^{\delta})\times \nu|^2 
   \leq \int\limits_0^T\int\limits_{\Omega}\rho^{\delta} g^{\delta} \cdot u^{\delta}
 + \int\limits_{\Omega}\Bigg( \frac{|q_0^{\delta}|^2}{\rho_0^{\delta}}\mathds{1}_{\{\rho_0^{\delta}>0\}} + \frac{a}{\gamma-1}(\rho_0^{\delta})^{\gamma} + \frac{\delta}{\beta-1}(\rho_0^{\delta})^{\beta} \Bigg).
\end{multline*}
To see the limiting behaviour of the above inequality as $\delta$ tends to zero, we observe that the limit as $\delta\to 0$ is similar to the limits $\varepsilon\rightarrow 0$ or $N\rightarrow \infty$ limit. Hence we obtain the energy inequality \eqref{energy}.

\underline{Step 6: Rigid body is away from boundary.} It remains to check that there exists $T$ small enough such that if $\operatorname{dist}(\mc{S}_0,\partial \Omega) > 2\sigma$, then
\begin{equation}\label{final-collision}
 \operatorname{dist}(\mc{S}(t),\partial \Omega) \geq \frac{3\sigma}{2}> 0 \quad \forall \ t\in [0,T].
\end{equation}
Let us introduce the following notation:
\begin{equation*}
(\mc{U})_{\sigma} = \left\{x\in \mathbb{R}^3\mid \operatorname{dist}(x,\mc{U})<\sigma\right\},
\end{equation*}
for an open set $\mc{U}$ and $\sigma > 0$. We recall the following result \cite[Lemma 5.4]{MR3272367}:
Let $\sigma > 0$. There exists $\delta_0 >0$ such that for all $0< \delta \leq \delta_0$,
\begin{equation}\label{cond:col1}
\mc{S}^{\delta}(t) \subset (\mc{S}(t))_{\sigma/4} \subset (\mc{S}^{\delta}(t))_{\sigma/2},\quad \forall\ t\in [0,T].
\end{equation}
Note that condition \eqref{cond:col1} and the relation \eqref{delta-collision}, i.e., $\operatorname{dist}(\mc{S}^{\delta}(t),\partial \Omega) \geq 2\sigma> 0$ for all $t\in [0,T]$ 
imply our required estimate \eqref{final-collision}. Thus, we conclude the proof of \cref{exist:upto collision}.
\end{proof}

It remains to prove \cref{approx:test}.
The main difference between \cref{approx:test} and  \cite[Proposition 5.1]{MR3272367} is the time regularity of the approximate test functions.
 Since here we only have weak convergence of $u^{\delta}$ in $L^2(0,T;L^2(\Omega))$, according to \cref{sequential2} we have convergence of $\eta^{\delta}_{t,s}$ in $H^{1}((0,T)^2; C^{\infty}_{loc}(\mb{R}^3))$. In \cite[Proposition 5.1]{MR3272367}, they have weak convergence of $u^{\delta}$ in $L^{\infty}(0,T;L^2(\Omega))$, which yields higher time regularity of the propagator $\eta^{\delta}_{t,s}$ in $W^{1,\infty}((0,T)^2; C^{\infty}_{loc}(\mb{R}^3))$. Still, the construction of the approximate function is essentially similar, which is why we skip the details and only present the main ideas of the proof here.

\begin{proof}[Proof of \cref{approx:test}]
The proof relies on the construction of the approximation $\phi^{\delta}_{\mc{S}}$ of $\phi_{\mc{S}}$ so that we can avoid the jumps at the interface for the test functions such that \eqref{cond1-good}--\eqref{cond2-good} holds. 

 The idea is to write the test functions in Lagrangian coordinates through the isometric propagator $\eta^{\delta}_{t,s}$ so that we can work on the fixed domain. Let $\Phi_{\mc{F}}$, $\Phi_{\mc{S}}$ and $\Phi^{\delta}_{\mc{S}}$ be the transformed quantities in the fixed domain related to $\phi_{\mc{F}}$, $\phi_{\mc{S}}$ and $\phi^{\delta}_{\mc{S}}$ respectively:
\begin{equation}\label{chofv}
\phi_{\mc{S}}(t,\eta^{\delta}_{t,0}(y)) = J_{\eta_{t,0}^{\delta}}\Big|_{y} (\Phi_{\mc{S}}(t,y)),\quad \phi_{\mc{F}}(t,\eta^{\delta}_{t,0}(y)) = J_{\eta_{t,0}^{\delta}}\Big|_{y} \Phi_{\mc{F}}(t,y)\quad\mbox{  and  }\quad\phi^{\delta}_{\mc{S}}(t,\eta^{\delta}_{t,0}(y)) =  J_{\eta_{t,0}^{\delta}}\Big|_{y} \Phi^{\delta}_{\mc{S}}(t,y),
\end{equation}
where $J_{\eta_{t,0}^{\delta}}$ is the Jacobian matrix of $\eta_{t,0}^{\delta}$. Note that if we define
\begin{equation*}
\Phi^{\delta}(t,y)=(1-\chi^{\delta}_{\mc{S}})\Phi_{\mc{F}} + \chi^{\delta}_{\mc{S}}\Phi^{\delta}_{\mc{S}},
\end{equation*}
then the definition of $\phi^{\delta}$ in \eqref{form:phi} gives
\begin{equation}\label{chofv1}
\phi^{\delta}(t,\eta^{\delta}_{t,0}(y)) = J_{\eta_{t,0}^{\delta}}\Big|_{y} (\Phi^{\delta}(t,y)).
\end{equation}
 Thus, the construction of the approximation $\phi^{\delta}_{\mc{S}}$ satisfying \eqref{cond1-good}--\eqref{cond2-good} is equivalent to building the approximation $\Phi^{\delta}_{\mc{S}}$ so that there is no jump for the function $\Phi^{\delta}$ at the interface and the following holds:
\begin{equation*}
\Phi^{\delta}_{\mc{S}}(t,x)=\Phi_{\mc{F}}(t,x) \quad \forall\ t\in (0,T),\ x\in \partial\mc{S}_0, 
\end{equation*}
and 
\begin{equation*}
\Phi^{\delta}_{\mc{S}}(t,\cdot)\approx \Phi_{\mc{S}}(t,\cdot)\mbox{ in }\mc{S}_0\mbox{ away from a }\delta^{\vartheta}\mbox{ neighborhood of }\partial\mc{S}_0\mbox{ with }\vartheta >0.
\end{equation*}
Explicitly, we set (for details, see \cite[Pages 2055-2058]{MR3272367}):
\begin{equation}\label{decomposePhi}
\Phi^{\delta}_{\mc{S}} = \Phi^{\delta}_{\mc{S},1} + \Phi^{\delta}_{\mc{S},2}, 
\end{equation}
with
\begin{equation}\label{phis1}
\Phi^{\delta}_{\mc{S},1}= \Phi_{\mc{S}} + \chi (\delta^{-\vartheta}z) \left[(\Phi_{\mc{F}}-\Phi_{\mc{S}}) - ((\Phi_{\mc{F}}-\Phi_{\mc{S}})\cdot e_z) e_z\right],
\end{equation}
 where $\chi : \mathbb{R} \rightarrow [0,1]$ is a smooth truncation function which is equal to 1 in a neighborhood of 0 and $z$ is a coordinate transverse to the boundary $\partial \mc{S}_0 = \{z=0\}$. Moreover, to make $\Phi^{\delta}_{\mc{S}}$ divergence-free in $\mc{S}_0$, we need to take $\Phi^{\delta}_{\mc{S},2}$ such that 
 \begin{equation*}
 \operatorname{div}\Phi^{\delta}_{\mc{S},2}=-\operatorname{div}\Phi^{\delta}_{\mc{S},1}\quad\mbox{in }\mc{S}_0,\quad \Phi^{\delta}_{\mc{S},2}=0\quad\mbox{on }\partial\mc{S}_0.
 \end{equation*}
Observe that, the explicit form \eqref{phis1} of $\Phi^{\delta}_{\mc{S},1}$ yields
\begin{equation}\label{divphis1}
\operatorname{div}\Phi^{\delta}_{\mc{S},2}=-\operatorname{div}\Phi^{\delta}_{\mc{S},1} = -\chi(\delta^{-\vartheta}z)\operatorname{div}\left[(\Phi_{\mc{F}}-\Phi_{\mc{S}}) - ((\Phi_{\mc{F}}-\Phi_{\mc{S}})\cdot e_z) e_z\right].
\end{equation}
Thus, the expressions \eqref{phis1}--\eqref{divphis1} give us: for all $p<\infty$,
\begin{align}\label{Phi11}
\|\Phi^{\delta}_{\mc{S},1}-\Phi_{\mc{S}}\|_{H^1(0,T; L^p(\mc{S}_0))} &\leq C\delta^{\vartheta/p},
\\
\label{Phi12}
\|\Phi^{\delta}_{\mc{S},1}-\Phi_{\mc{S}}\|_{H^1(0,T; W^{1,p}(\mc{S}_0))} &\leq C\delta^{-\vartheta(1-1/p)},
\end{align}
and
\begin{equation}\label{Phi2}
\|\Phi^{\delta}_{\mc{S},2}\|_{H^1(0,T; W^{1,p}(\mc{S}_0))}\leq C\|\chi({\delta}^{-\vartheta}z)\operatorname{div}\left[(\Phi_{\mc{F}}-\Phi_{\mc{S}}) - ((\Phi_{\mc{F}}-\Phi_{\mc{S}})\cdot e_z) e_z\right]\|_{H^1(0,T; L^{p}(\mc{S}_0))} \leq C\delta^{\vartheta/p}.
\end{equation}
Using the decomposition \eqref{decomposePhi} of $\Phi_{\mc{S}}^{\delta}$ and the estimates \eqref{Phi11}--\eqref{Phi12}, \eqref{Phi2}, we obtain
\begin{align*}
\|\Phi^{\delta}_{\mc{S}}-\Phi_{\mc{S}}\|_{H^1(0,T; L^p(\mc{S}_0))} &\leq C\delta^{\vartheta/p},\\
\|\Phi^{\delta}_{\mc{S}}-\Phi_{\mc{S}}\|_{H^1(0,T; W^{1,p}(\mc{S}_0))} &\leq C\delta^{-\vartheta(1-1/p)}.
\end{align*}
Furthermore, we combine the above estimates with the uniform bound of the propagator $\eta^{\delta}_{t,0}$ in $H^1(0,T; C^{\infty}(\Omega))$ to obtain
\begin{align}\label{est:Phi}
\left\|J_{\eta_{t,0}^{\delta}}|_{y}(\Phi^{\delta}_{\mc{S}}-\Phi_{\mc{S}})\right\|_{H^1(0,T; L^p(\mc{S}_0))} &\leq C\delta^{\vartheta/p},\\
\label{est:dPhi}
\left\|J_{\eta_{t,0}^{\delta}}|_{y}(\Phi^{\delta}_{\mc{S}}-\Phi_{\mc{S}})\right\|_{H^1(0,T; W^{1,p}(\mc{S}_0))} &\leq C\delta^{-\vartheta(1-1/p)}.
\end{align}
Observe that due to the change of variables \eqref{chofv} and estimate \eqref{est:Phi}:
\begin{equation}\label{est1}
\|\chi^{\delta}_{\mc{S}}(\phi^{\delta}_{\mc{S}}-\phi_{\mc{S}})\|_{L^p((0,T)\times \Omega))}\leq C\|J_{\eta_{t,0}^{\delta}}|_{y}(\Phi^{\delta}_{\mc{S}}-\Phi_{\mc{S}})\|_{L^p((0,T)\times \mc{S}_0)}\leq C
\delta^{\vartheta/p}.
\end{equation}
Since
\begin{equation*}
\|\phi^{\delta}-\phi\|_{L^p((0,T)\times \Omega))}\leq \|(\chi^{\delta}_{\mc{S}}-\chi_{\mc{S}})\phi_{\mc{F}}\|_{L^p((0,T)\times \Omega))} + \|\chi^{\delta}_{\mc{S}}(\phi_{\mc{S}}^{\delta}-\phi_{\mc{S}})\|_{L^p((0,T)\times \Omega))} + \|(\chi^{\delta}_{\mc{S}}-\chi_{\mc{S}})\phi_{\mc{S}}\|_{L^p((0,T)\times \Omega))},
\end{equation*}
using the strong convergence of $\chi^{\delta}_{\mc{S}}$ and the estimate \eqref{est1}, we conclude that
\begin{equation*}
\phi^{\delta} \rightarrow \phi\mbox{ strongly in   }L^p((0,T)\times \Omega).
\end{equation*}
We use estimate \eqref{Phi12} and the relation \eqref{chofv1} to obtain
\begin{equation*}
\|\phi^{\delta}\|_{L^p(0,T;W^{1,p}(\Omega))}\leq \delta^{-\vartheta(1-1/p)}.
\end{equation*}
Moreover, the change of variables \eqref{chofv} and estimate \eqref{est:Phi} give
\begin{align}\label{timechi}
\begin{split} \|\chi^{\delta}_{\mc{S}}(\partial_t + P^{\delta}_{\mc{S}}u^{\delta}\cdot\nabla)(\phi^{\delta}-\phi_{\mc{S}})\|_{L^2(0,T;L^p(\Omega))} &\leq C\left\|\frac{d}{dt}\left(J_{\eta_{t,0}^{\delta}}\Big|_{y}(\Phi^{\delta}_{\mc{S}}-\Phi_{\mc{S}})\right)\right\|_{L^2(0,T;L^p(\mc{S}_0))}\\ 
& \leq C\left\|J_{\eta_{t,0}^{\delta}}|_{y}(\Phi^{\delta}_{\mc{S}}-\Phi_{\mc{S}})\right\|_{H^1(0,T;L^p(\mc{S}_0))}\leq C\delta^{\vartheta/p}.
\end{split}
\end{align}
The above estimate \eqref{timechi}, strong convergence of $\chi_{\mc{S}}^{\delta}$ to $\chi_{\mc{S}}$ in $C(0,T;L^p(\Omega))$ and weak convergence of $P^{\delta}_{\mc{S}}u^{\delta}$ to $P_{\mc{S}} u \mbox{ weakly } \mbox{ in }L^{2}(0,T; C^{\infty}_{loc}(\mb{R}^3))$, give us
\begin{equation*}
(\partial_t + P^{\delta}_{\mc{S}}u^{\delta}\cdot\nabla)\phi^{\delta}\rightarrow (\partial_t + P_{\mc{S}}u\cdot \nabla)\phi \mbox{ weakly in } L^2(0,T;L^p(\Omega)), 
\end{equation*}
where 
\begin{equation*}
\phi^{\delta}=(1-\chi^{\delta}_{\mc{S}})\phi_{\mc{F}} + \chi^{\delta}_{\mc{S}}\phi^{\delta}_{\mc{S}}\quad\mbox{ and }\quad \phi=(1-\chi_{\mc{S}})\phi_{\mc{F}} + \chi_{\mc{S}}\phi_{\mc{S}}.
\end{equation*}
 \end{proof}

\section*{Acknowledgment}
{\it \v S. N. and A. R. have been supported by the Czech Science Foundation (GA\v CR) project GA19-04243S. The Institute of Mathematics, CAS is supported by RVO:67985840.}

\section*{Compliance with Ethical Standards}
\section*{Conflict of interest}
The authors declare that there are no conflicts of interest.
\bibliography{reference1}

\begin{thebibliography}{10}

\bibitem{BG}
{\sc M.~Boulakia and S.~Guerrero}, {\em A regularity result for a solid-fluid
  system associated to the compressible {N}avier-{S}tokes equations}, Ann.
  Inst. H. Poincar\'{e} Anal. Non Lin\'{e}aire, 26 (2009), pp.~777--813.

\bibitem{CN}
{\sc N.~Chemetov and {\v S}.~Ne{\v c}asov{\'a}}, {\em The motion of the rigid
  body in the viscous fluid including collisions. {G}lobal solvability result},
  Nonlinear Anal. Real World Appl., 34 (2017), pp.~416--445.

\bibitem{CST}
{\sc C.~Conca, J.~S. Martin, and M.~Tucsnak}, {\em Existence of solutions for
  the equations modelling the motion of a rigid body in a viscous fluid},
  Commun. Partial Differential Equation, 25 (2000), pp.~1019--1042.

\bibitem{DEES1}
{\sc B.~Desjardins and M.~J. Esteban}, {\em Existence of weak solutions for the
  motion of rigid bodies in a viscous fluid}, Arch. Rational Mech. Anal., 146
  (1999), pp.~59--71.

\bibitem{DEES2}
\leavevmode\vrule height 2pt depth -1.6pt width 23pt, {\em On weak solutions
  for fluid-rigid structure interaction: Compressible and incompressible
  model}, Commun. Partial Differential Equations, 25 (2000), pp.~1399--1413.

\bibitem{DiPerna1989}
{\sc R.~DiPerna and P.~Lions}, {\em Ordinary differential equations, transport
  theory and sobolev spaces.}, Inventiones mathematicae, 98 (1989),
  pp.~511--548.

\bibitem{F4}
{\sc E.~Feireisl}, {\em On the motion of rigid bodies in a viscous compressible
  fluid}, Arch. Rational Mech. Anal., 167 (2003), pp.~281--308.

\bibitem{F3}
\leavevmode\vrule height 2pt depth -1.6pt width 23pt, {\em On the motion of
  rigid bodies in a viscous incompressible fluid}, Journal of Evolution
  Equations, 3 (2003), pp.~419--441.

\bibitem{EF70}
\leavevmode\vrule height 2pt depth -1.6pt width 23pt, {\em Dynamics of viscous
  compressible fluids}, Oxford University Press, Oxford, 2004.

\bibitem{FHN}
{\sc E.~Feireisl, M.~Hillairet, and {\v S}.~Ne{\v c}asov{\' a}}, {\em On the
  motion of several rigid bodies in an incompressible non-newtonian fluid},
  Nonlinearity, 21 (2008), pp.~1349--1366.

\bibitem{MR3729430}
{\sc E.~Feireisl and A.~Novotn\'{y}}, {\em Singular limits in thermodynamics of
  viscous fluids}, Advances in Mathematical Fluid Mechanics,
  Birkh\"{a}user/Springer, Cham, 2017.

\bibitem{MR1867887}
{\sc E.~Feireisl, A.~Novotn\'{y}, and H.~Petzeltov\'{a}}, {\em On the existence
  of globally defined weak solutions to the {N}avier-{S}tokes equations}, J.
  Math. Fluid Mech., 3 (2001), pp.~358--392.

\bibitem{G2}
{\sc G.~P. Galdi}, {\em On the motion of a rigid body in a viscous liquid: a
  mathematical analysis with applications}, in Handbook of mathematical fluid
  dynamics, {V}ol. {I}, North-Holland, Amsterdam, 2002, pp.~653--791.

\bibitem{GGH13}
{\sc M.~Geissert, K.~G{\"o}tze, and M.~Hieber}, {\em {$L^p$}-theory for strong
  solutions to fluid-rigid body interaction in {N}ewtonian and generalized
  {N}ewtonian fluids}, Trans. Amer. Math. Soc., 365 (2013), pp.~1393--1439.

\bibitem{GH}
{\sc D.~G{\' e}rard-Varet and M.~Hillairet}, {\em Regularity issues in the
  problem of fluid structure interaction}, Archive for Rational Mechanical
  Analysis, 2010 (195), pp.~375--407.

\bibitem{MR3272367}
{\sc D.~G{\'e}rard-Varet and M.~Hillairet}, {\em Existence of weak solutions up
  to collision for viscous fluid-solid systems with slip}, Comm. Pure Appl.
  Math., 67 (2014), pp.~2022--2075.

\bibitem{GHC}
{\sc D.~G{\' e}rard-Varet, M.~Hillairet, and C.~Wang}, {\em The influence of
  boundary conditions on the contact problem in a 3d navier-stokes flow}, J.
  Math. Pures Appl., 103 (2015), pp.~1--38.

\bibitem{GLSE}
{\sc M.~D. Gunzburger, H.~C. Lee, and A.~Seregin}, {\em Global existence of
  weak solutions for viscous incompressible flow around a moving rigid body in
  three dimensions}, J. Math. Fluid Mech., 2 (2000), pp.~219--266.

\bibitem{HMTT}
{\sc B.~H. Haak, D.~Maity, T.~Takahashi, and M.~Tucsnak}, {\em Mathematical
  analysis of the motion of a rigid body in a compressible
  {N}avier-{S}tokes-{F}ourier fluid}, Math. Nachr., 292 (2019), pp.~1972--2017.

\bibitem{HES}
{\sc T.~Hesla}, {\em Collision of smooth bodies in a viscous fluid: A
  mathematical investigation}, 2005.

\bibitem{HiMu}
{\sc M.~Hieber and M.~Murata}, {\em The {$L^p$}-approach to the fluid-rigid
  body interaction problem for compressible fluids}, Evol. Equ. Control Theory,
  4 (2015), pp.~69--87.

\bibitem{HIL}
{\sc M.~Hillairet}, {\em Lack of collision between solid bodies in a 2d
  incompressible viscous flow}, Comm. Partial Differential Equations, 32
  (2007), pp.~1345--1371.

\bibitem{HT}
{\sc M.~Hillairet and T.~Takahashi}, {\em Collisions in three-dimensional fluid
  structure interaction problems}, SIAM J. Math. Anal., 40 (2009),
  pp.~2451--2477.

\bibitem{HOST}
{\sc K.-H. Hoffmann and V.~N. Starovoitov}, {\em On a motion of a solid body in
  a viscous fluid. {T}wo-dimensional case}, Adv. Math. Sci. Appl., 9 (1999),
  pp.~633--648.

\bibitem{KrNePi2}
{\sc O.~Kreml, {\v S}.~Ne{\v c}asov{\'a}, and T.~Piasecki}, {\em Weak-strong
  uniqueness for the compressible fluid-rigid body interaction}, J.
  Differential Equations, 268 (2020), pp.~4756--4785.

\bibitem{kuku}
{\sc P.~Kuku\v{c}ka}, {\em On the existence of finite energy weak solutions to
  the {N}avier-{S}tokes equations in irregular domains}, Math. Methods Appl.
  Sci., 32 (2009), pp.~1428--1451.

\bibitem{LI4}
{\sc P.-L. Lions}, {\em Mathematical topics in fluid dynamics, Vol.2,
  Compressible models}, Oxford Science Publication, Oxford, 1998.

\bibitem{MOF}
{\sc H.~K. Moffat}, {\em Viscous and resistive eddies near a sharp corner}, J.
  Fluid Mech., 18 (1964), pp.~1--18.

\bibitem{NP}
{\sc J.~Neustupa and P.~Penel}, {\em A weak solvability of the
  {N}avier-{S}tokes equation with {N}avier's boundary condition around a ball
  striking the wall}, in Advances in mathematical fluid mechanics, Springer,
  Berlin, 2010, pp.~385--407.

\bibitem{MR2084891}
{\sc A.~Novotn\'{y} and I.~Stra\v{s}kraba}, {\em Introduction to the
  mathematical theory of compressible flow}, vol.~27 of Oxford Lecture Series
  in Mathematics and its Applications, Oxford University Press, Oxford, 2004.

\bibitem{roy2019stabilization}
{\sc A.~Roy and T.~Takahashi}, {\em Stabilization of a rigid body moving in a
  compressible viscous fluid}, Journal of Evolution Equations,  (2020).

\bibitem{SST}
{\sc J.~A. San~Mart{\'\i}n, V.~Starovoitov, and M.~Tucsnak}, {\em Global weak
  solutions for the two-dimensional motion of several rigid bodies in an
  incompressible viscous fluid}, Arch. Ration. Mech. Anal., 161 (2002),
  pp.~113--147.

\bibitem{SER3}
{\sc D.~Serre}, {\em Chute libre d'un solide dans un fluide visqueux
  incompressible. {E}xistence}, Jap. J. Appl. Math., 4 (1987), pp.~99--110.

\bibitem{T}
{\sc T.~Takahashi}, {\em Analysis of strong solutions for the equations
  modeling the motion of a rigid-fluid system in a bounded domain}, Adv.
  Differential Equations, 8 (2003), pp.~1499--1532.

\bibitem{Wa}
{\sc C.~Wang}, {\em Strong solutions for the fluid-solid systems in a 2-d
  domain}, Asymptot. Anal., 89 (2014), pp.~263--306.

\end{thebibliography}
\bibliographystyle{siam}

\end{document}